\theoremstyle{plain}
\newtheorem{thm}{Theorem}[section]
\newtheorem{lem}[thm]{Lemma}
\newtheorem{prop}[thm]{Proposition}
\newtheorem{cor}[thm]{Corollary}
\theoremstyle{definition}
\newtheorem{de}[thm]{Definition}
\newtheorem{rem}[thm]{Remark}
\newcommand{\Z}{{\mathbb{Z}}}
\newcommand{\N}{\mathbb{N}}
\DeclareMathOperator{\diam}{diam}
\DeclareMathOperator{\supp}{supp}
\DeclareMathOperator{\orb}{orb}
\numberwithin{equation}{section}
\begin{document}
\title[When is a dynamical system mean sensitive?]{When is a dynamical
system mean sensitive?}
\author[F. Garc\'ia-Ramos]{Felipe~Garc\'ia-Ramos}
\address[F. Garc\'{\i}a-Ramos]{Instituto de Fisica, Universidad Autonoma de
San Luis Potosi, Manuel Nava 6, SLP, Mexico 78290 and Catedras CONACyT}
\email{felipegra@yahoo.com}
\author[J.~Li]{Jie Li}
\address[J. Li]{School of Mathematics and Statistics,
Jiangsu Normal University, Xuzhou, Jiangsu, 221116, P.R. China --\and--
Academy of Mathematics and Systems Science, Chinese Academy of Sciences,
Beijing, 100190, P.R. China}
\email{jiel0516@mail.ustc.edu.cn}
\author[R.~Zhang]{Ruifeng Zhang}
\address[R.~Zhang]{School of Mathematics, Hefei University of Technology,
Hefei, Anhui, 230009, P.R. China}
\email{rfzhang@mail.ustc.edu.cn}
\subjclass[2010]{54H20, 37B10, 54B20, 37B05}
\keywords{Mean sensitivity, Banach-mean sensitivity, diam-mean sensitivity,
mean equicontinuity, Banach-mean equicontinuity, diam-mean equicontinuity,
hyperspace}

\begin{abstract}
This article is devoted to study which conditions imply that a topological
dynamical system is mean sensitive and which do not. Among other things we
show that every uniquely ergodic, mixing system with positive entropy is
mean sensitive. On the other hand we provide an example of a transitive
system which is cofinitely sensitive or Devaney chaotic with positive
entropy but fails to be mean sensitive.

%
%

As applications of our theory and examples, we negatively answer an open
question regarding equicontinuity/sensitivity dichotomies raised by Tu, we
introduce and present results of locally mean equicontinuous systems and we
show that mean sensitivity of the induced hyperspace does not imply that of
the phase space.
\end{abstract}

\maketitle
\tableofcontents

\section{Introduction}

\label{sect:Intro}

A pair $(X,T)$ is called a \textit{topological dynamical system} (or simply
\textit{t.d.s.}) if $X$ is a compact metric space with metric $d$ and $T\colon
X\rightarrow X$ is a continuous map. A Borel probability measure $\mu $ in $%
X $ is \textit{ergodic} if it is invariant under $T$ and every invariant set
has measure 0 or 1.

A question of interest in topological dynamical systems is when orbits from
nearby points deviate. \textit{Sensitive dependence on initial conditions}
(or briefly \textit{Sensitivity}), appeared as the first explicit
mathematical definition to describe this turbulent behavior \cite{Rue77}.
According to the work of Auslander and Yorke \cite{AY80}, we say that a
t.d.s. $(X,T)$ is \textit{sensitive} if there exists $\delta>0$ such that
for any non-empty open subset $U\subset X$, there exist $x,y\in U$ and $n\in%
\mathbb{N}$ such that $d(T^{n}x,T^{n}y)>\delta$. Following from \cite[%
Theorem 3.4]{AK03} we have the following proposition.

\begin{prop}
\label{prop:Sen} A t.d.s. $(X, T)$ is sensitive if and only if there exists
a $\delta>0$ such that for any non-empty open subset $U\subset X$ one of the
following holds:

\begin{enumerate}
\item \label{prop:Sen:1} $N_{T}(U,\delta):=\{n\in{\mathbb{Z}}_{+}: \diam%
(T^{n}(U))>\delta\}$ is infinite, where $\diam(\cdot)$ denotes the diameter
of the set;

\item \label{prop:Sen:2} $\limsup_{n\to\infty}d(T^{n}x,T^{n}y) >\delta$ for
some $x,y\in U$;

\item \label{prop:Sen:3} $\limsup_{n\to\infty}\diam(T^{n}U)>\delta$.
\end{enumerate}
\end{prop}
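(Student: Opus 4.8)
The plan is to prove the two implications separately; the reverse implication is essentially formal, and the forward one needs only a short continuity argument. (The statement can also be extracted directly from \cite[Theorem~3.4]{AK03}, but the self-contained argument below is quick.)

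For the \emph{if} direction, assume there is $\delta>0$ such that every nonempty open $U\subseteq X$ satisfies one of \ref{prop:Sen:1}--\ref{prop:Sen:3}. I claim each of these alternatives already produces an $n$ and points $x,y\in U$ with $d(T^nx,T^ny)>\delta$, which is exactly the definition of sensitivity with constant $\delta$. Indeed, if \ref{prop:Sen:1} holds then $N_T(U,\delta)$ is nonempty, so $\diam(T^nU)>\delta$ for some $n$; if \ref{prop:Sen:3} holds then $\limsup_n\diam(T^nU)>\delta$ forces $\diam(T^nU)>\delta$ for infinitely many $n$; and if \ref{prop:Sen:2} holds then, since $d(T^nx,T^ny)\le\diam(T^nU)$, we again get $\diam(T^nU)>\delta$ for some $n$. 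In every case $\diam(T^nU)>\delta$ unwinds, by the definition of diameter, to the desired $x,y\in U$.

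For the \emph{only if} direction, assume $(X,T)$ is sensitive with constant $\delta$; I will in fact prove the sharper assertion that $N_T(U,\delta)$ is infinite for every nonempty open $U$, so that \ref{prop:Sen:1} always holds. Suppose not, so some nonempty open $U$ has $N_T(U,\delta)$ finite; then there is $N\ge 0$ with $\diam(T^nU)\le\delta$ for all $n\ge N$. The idea is to remove the finitely many remaining times $0,\dots,N-1$ by shrinking $U$ near one of its points. Fix $u_0\in U$; by continuity of $T^0,\dots,T^{N-1}$ at $u_0$ there is an open neighborhood $V\subseteq U$ of $u_0$ with $\diam(T^jV)\le\delta$ for every $j\in\{0,\dots,N-1\}$ (intersect the finitely many neighborhoods supplied by continuity; when $N=0$ simply take $V=U$). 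Since $\diam(T^nV)\le\diam(T^nU)\le\delta$ for $n\ge N$ as well, we obtain $\diam(T^nV)\le\delta$ for all $n\ge 0$. But sensitivity applied to the nonempty open set $V$ yields $x,y\in V$ and some $n$ with $d(T^nx,T^ny)>\delta\ge\diam(T^nV)\ge d(T^nx,T^ny)$, a contradiction. Hence $N_T(U,\delta)$ is infinite.

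I do not expect a real obstacle: the only subtle point is the passage from what sensitivity literally provides — separation of $U$ at a single, possibly small, time — to separation at arbitrarily large times, which is exactly what the continuity-shrinking step handles. It is perhaps worth noting that for a fixed $\delta$ one has \ref{prop:Sen:2} $\Rightarrow$ \ref{prop:Sen:3} $\Rightarrow$ \ref{prop:Sen:1} (since $d(T^nx,T^ny)\le\diam(T^nU)$, and a $\limsup$ exceeding $\delta$ forces the value to exceed $\delta$ infinitely often), so the three-way disjunction is logically just condition \ref{prop:Sen:1}; the several equivalent forms are recorded only for convenience in the later applications.
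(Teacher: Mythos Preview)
The paper does not actually prove this proposition; it simply attributes it to \cite[Theorem~3.4]{AK03}. Your self-contained argument correctly establishes the equivalence of sensitivity with condition~\eqref{prop:Sen:1} --- the continuity-shrinking step is exactly the right idea --- and, read literally as a disjunction, that is enough, since as you note \eqref{prop:Sen:2}$\Rightarrow$\eqref{prop:Sen:3}$\Rightarrow$\eqref{prop:Sen:1}.

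There is, however, a genuine gap relative to the intended content. The paragraph immediately following the proposition says that one strengthens sensitivity ``by replacing the upper limits in \eqref{prop:Sen:2} and \eqref{prop:Sen:3} by upper average limits,'' and then defines mean sensitivity and diam-mean sensitivity as the resulting (distinct!) notions. This only makes sense if \eqref{prop:Sen:2} and \eqref{prop:Sen:3} are each \emph{individually} equivalent to sensitivity, not merely consequences of it. Under that reading you are missing the implication sensitive $\Rightarrow$ \eqref{prop:Sen:2}: your argument produces, for each $n$ in an infinite set, \emph{some} pair in $U$ separated by $\delta$ at time $n$, but the pair may vary with $n$; condition~\eqref{prop:Sen:2} demands a \emph{single} pair $x,y\in U$ with $\limsup_n d(T^nx,T^ny)>\delta$. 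Pinning down such a fixed pair requires an additional step --- for example, a Baire category argument in $U\times U$ showing that $\bigcap_{N}\bigcup_{n\ge N}\{(x,y):d(T^nx,T^ny)>\delta/2\}$ is residual (density of each union follows from your \eqref{prop:Sen:1} and the triangle inequality) --- and this is essentially what the Akin--Kolyada theorem supplies.
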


Sensitivity as a form of chaos is very weak. For example the Sturmian
subshift, which is considered as a very rigid system, is sensitive.

Stronger forms of sensitivity have also been studied by adding extra
requirements on the set $N_{T}(U,\delta )$ in \eqref{prop:Sen:1}. This idea
was carried out by Moothathu in \cite{Moo07}. In particular, he introduced
the notion of cofinite sensitivity. That is, a t.d.s. $(X,T)$ is \textit{%
cofinitely sensitive} if there is a $\delta >0$ such that for any non-empty
open subset $U\subset X$, $N_{T}(U,\delta )$ is cofinite, i.e. $\diam%
(T^{i}U)\leq \delta $ for only finitely many times.

Another interesting approach to strengthen sensitivity (due to its
connections with ergodic theory) is to replace the upper limits in %
\eqref{prop:Sen:2} and \eqref{prop:Sen:3} by upper average limits (Cesaro
and Banach averages) \cite{LTY13, GM14}.
Strictly speaking:

\begin{de}
\label{def:Mean-Sen} A t.d.s. $(X,T)$ is \textit{mean sensitive} (or \textit{%
Banach-mean sensitive}) if there is a $\delta>0$ such that for any
neighbourhood $U$ of $X$, there are $x,y\in U$ such that
\begin{equation*}
\limsup_{n\rightarrow\infty}\frac{1}{n}\sum_{i=0}^{n-1}d(T^{i}x,T^{i}y)>%
\delta\ (\text{ or }\limsup_{N-M\rightarrow\infty}\frac{1}{N-M}\sum
_{i=M}^{N-1}d(T^{i}x,T^{i}y)>\delta).
\end{equation*}
A t.d.s. $(X,T)$ is \textit{diam-mean sensitive} if there is a $\delta>0$
such that for every neighbourhood $U$ of $X$,
\begin{equation*}
\limsup_{n\rightarrow\infty}\frac{1}{n} \sum_{i=0}^{n-1}\diam(T^{i}U)>\delta.
\end{equation*}
\end{de}

When considering the opposite side of sensitivity (resp. mean sensitivity,
Banach-mean sensitivity, diam-mean sensitivity), the notions of
equicontinuous (resp. mean equicontinuous, Banach-mean equicontinuity,
diam-mean equicontinuous) points and systems appear correspondingly ( \cite%
{GW93, LTY13, GM14} respectively); see Section \ref{sect:preliminaries} for definitions.

\smallskip Mean sensitivity/equicontinuity has been studied recently because
it turned out to be a useful concept to describe/characterize ergodic
theoretic properties like when a system has discrete spectrum
and when its maximal equicontinuous factor is an isomorphism with
topological notions. 
\cite{GM14, DG15, LTY13}. The main concern of this paper is to explore which
conditions imply mean sensitivity and which do not.

\medskip The following implications follow from the definitions
\begin{equation*}
\begin{array}{cccc}
& \text{mean sensitivity}\Rightarrow & \text{Banach-mean sensitivity} &  \\
& \Downarrow &  &  \\
\text{cofinite sensitivity}\Rightarrow & \text{diam-mean sensitivity} &  &
\end{array}%
\end{equation*}%
From examples in \cite{GM14,LTY13} we know each implication is strict. \
Note that the first row represents `point' forms of sensitivity where points
are used in the definition 
and in the second row we write `diameter' forms of sensitivity when the
sensitivity is measured with the diameter of the orbit of open sets.
Motivated by this we have the following question: does cofinite sensitivity
(which is the strongest form of `diameter' sensitivity) imply Banach-mean
sensitivity (the weakest form of `point' sensitivity)? This question has a
negative answer.

\begin{thm}
\label{thm:Diam-Mean-Sen not Mean-Sen} There exists a transitive cofinitely
sensitive t.d.s. $(X,T)$ which is Banach-mean equicontinuous and hence not
Banach-mean sensitive.
\end{thm}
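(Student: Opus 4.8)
The plan is to build, by hand, a symbolic (subshift-type) system $(X,T)$ in which the large diameters demanded by cofinite sensitivity are produced, at each time $i$, by a \emph{different} pair of nearby points, so that no single pair of close points is ever responsible for a positive upper Banach density of ``far apart'' times. Recall from the discussion around Proposition~\ref{prop:Sen} that cofinite sensitivity means there is one $\delta>0$ with $\diam(T^iU)>\delta$ for all but finitely many $i$, for every nonempty open $U$; and recall that such an $(X,T)$ must also be zero entropy, uniquely ergodic with discrete spectrum (being Banach-mean, hence mean, equicontinuous and transitive), so the construction has to force that ``tameness'' on averages simultaneously. Concretely, I would take $X=\overline{\orb(\omega)}$ for a sequence $\omega$ obtained by inserting short ``witness patterns'' at a very sparse sequence of scales into an overwhelmingly ``silent'' background (e.g.\ runs of a fixed symbol). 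Transitivity is then free: $\omega$ visits every cylinder of $X$, so $(X,T)$ is point-transitive (it will not be minimal — the silent point is a proper subsystem).

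Two design requirements drive the choice and placement of the witness patterns. First, \emph{(a)}: for every word $w$ occurring in $\omega$ and every sufficiently large $i$, the language of $X$ must contain two words of length $i+|w|$ ending in $w$ which already disagree in their first coordinate — this is exactly what yields $\diam(T^i[w])>\tfrac12$ for all large $i$, i.e.\ cofinite sensitivity with $\delta=\tfrac12$. Second, \emph{(b)}: the witness regions must be placed so sparsely — the gaps between successive witness regions growing fast — that along \emph{any} orbit the non-background coordinates occupy a set of upper Banach density $0$.

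Granting (a) and (b), the verification of Banach-mean equicontinuity — the crux — goes as follows: given $\ep>0$, pick $\delta>0$ small enough that $d(x,y)<\delta$ forces $x,y$ to agree on a long initial window; since $d\le\diam(X)$, the Banach average $\limsup_{N-M\to\infty}\frac1{N-M}\sum_{i=M}^{N-1} d(T^ix,T^iy)$ is dominated by $\diam(X)$ times the upper Banach density of the set of times $i$ at which $x$ and $y$ disagree within a short window around coordinate $i$, and that time-set is controlled by $\{j:x_j\ne y_j\}$, which for $x,y\in X$ sits inside the union of the two points' witness supports, a set of upper Banach density $0$ by (b); so for $\delta$ small enough the Banach average is $\le\ep$, as required. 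The main obstacle is precisely the coexistence of (a) and (b): cofinite sensitivity asks the witness mechanism to be active at \emph{every} large time, while Banach-mean equicontinuity asks each fixed close pair to be far apart only on a Banach-null set of times — these are reconciled only because at each time it is a different witnessing pair inside $[w]$ that realizes the diameter. Making this rigorous requires a careful combinatorial bookkeeping that for every word $w$ and every large $i$ a suitable witnessing pair genuinely lies in $[w]$ while keeping the complexity subexponential, that the exceptional set of times for each cylinder is actually \emph{finite} (so one gets cofinite sensitivity, not merely diam-mean sensitivity), and — most delicately — that the density estimate in (b) survives \emph{arbitrarily long} averaging windows $[M,N)$, which is why ordinary Cesàro control would not suffice and the inter-witness gaps must be taken to grow sufficiently fast.
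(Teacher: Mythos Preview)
Your plan is essentially the paper's own construction: a subshift $X=\overline{\orb(x,\sigma)}$ built so that every point has $1$'s of upper Banach density zero (yielding Banach-mean equicontinuity for \emph{all} pairs, not merely close ones), while the language contains, for each allowed word $w$ and each large $i$, extensions of the form $w0^{j}1\cdots$ and $w0^\infty$ realizing $\diam(\sigma^i[w])>1/2$ --- note that you want words \emph{starting} with $w$, not ending in $w$, for this step. The paper carries this out explicitly via nested blocks $A_n,B_n$ with $B_n$ housing the witness patterns $A_{n-1}0^{i-1}10^{|A_n|-i}$ for all $i$, and the growth condition $k_n\ge n(2|A_n|+|B_n|)$ enforcing the Banach-density-zero estimate (Lemmas~\ref{lem:Density-1} and~\ref{lem:Density-2}).
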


Other popular forms of chaos are positive entropy, mixing, Li-Yorke chaos
and Devaney chaos. A system is \textit{Devaney chaotic} if it is transitive
and the set of periodic points is dense. For the definition of Li-Yorke
chaos see \cite{BGKM} . Considerable time has been spent figuring out which
conditions of chaos are stronger than others. For example it is known that
positive topological entropy, Devaney chaos and weak mixing implies Li-Yorke
chaos \cite{BGKM, HY, Iwanik91}.

In this paper we show that:

\begin{thm}
\label{thm:Dich-E-system} There exists a Devaney chaotic t.d.s. with
positive topological entropy that is almost mean equicontinuous and hence not
mean sensitive.
\end{thm}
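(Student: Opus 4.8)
The plan is to look for the example among subshifts. The goal is a subshift $(X,\sigma)$ in which a residual set of points is ``asymptotically rigid'' -- so that nearby points stay close in Ces\`aro mean and $X$ is almost mean equicontinuous -- while a meager family of points carries enough combinatorial freedom to force $h_{\mathrm{top}}(X)>0$, and $X$ is still the orbit closure of a single point and has dense periodic points. A natural first attempt is a ``spacer'' subshift: fix a mixing shift of finite type $Z$ of positive entropy, adjoin a fresh symbol $\star$, and form $X$ from bi-infinite concatenations of $Z$-words separated by runs of $\star$'s whose length is forced to be enormous relative to the adjacent $Z$-words (so that along any single orbit the $Z$-content thins out, yet $Z$-words of every length still occur), with $X$ the orbit closure of a transitive point obtained by listing all admissible words. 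One must refine the gap rule -- a purely local ``at least $2^{\,p+q}$ spacers between consecutive $Z$-blocks of lengths $p,q$'' is not enough, since short $Z$-blocks can still be packed densely just behind a long run of $\star$'s -- so the actual construction will use a hierarchy of spacer scales (equivalently, a Toeplitz-type skeleton of slowly increasing period), arranged so that a generic point is pinned to a skeleton up to a density that tends to $0$, while a point pinned only at a low level retains positive-density freedom.

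Granting such an $X$, the four properties are checked as follows. \emph{Positive entropy:} each $z\in Z$ is a limit of configurations $\star^{\infty}\,z|_{[-M,M]}\,\star^{\infty}\in X$, so $Z\subseteq X$ and $h_{\mathrm{top}}(X)\ge h_{\mathrm{top}}(Z)>0$. \emph{Transitivity:} the defining point realizes every word of $\mathcal{L}(X)$. \emph{Dense periodic points:} any word of $X$ surrounded by a long enough run of $\star$'s extends to a periodic point of $X$, and such periodic points meet every cylinder. \emph{Almost mean equicontinuity:} one shows that the ``asymptotically rigid'' points -- those whose orbit spends asymptotically all of its time deep inside the skeleton (equivalently, forward $Z$-content of density $0$) -- form a dense $G_\delta$ and are mean-equicontinuity points: given $\varepsilon>0$, a sufficiently long window of such a point certifies, via the gap/skeleton rule, that every nearby point agrees with it off a set of density $<\varepsilon$, whence $\limsup_n\frac1n\sum_{i<n}d(\sigma^ix,\sigma^iy)<\varepsilon$ for all $y$ close to $x$. (Alternatively: verify directly that $X$ is not mean sensitive -- for each $\delta>0$ a sufficiently ``$\star$-deep'' cylinder has Ces\`aro-diameter $<\delta$ -- and appeal to the dichotomy that a transitive system is either almost mean equicontinuous or mean sensitive.) Being Devaney chaotic, of positive entropy and almost mean equicontinuous, $X$ is then in particular not mean sensitive, as asserted.

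The substance of the proof -- and the main obstacle -- is the design of $X$: one must simultaneously (i) keep $Z$-blocks of unbounded length, so that the entropy is genuinely positive; (ii) force the rigid part to be rigid up to arbitrarily small density, so that a residual set of mean-equicontinuity points exists; and (iii) keep $X$ transitive with dense periodic points, i.e.\ prevent the dense orbit from getting permanently ``trapped'' in the rigid skeleton, which is exactly what a naive propagation rule would do. Reconciling (ii) with (iii) is the delicate point and is where a hierarchy of scales, rather than a single skeleton, is needed; reconciling (i) with (ii) is possible because a long $Z$-block, forced to sit inside an enormous spacer region, is automatically far from every moderate cylinder and so does not spoil the diameter estimates. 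Once the combinatorial construction is pinned down, properties (i)--(iii) and the mean-equicontinuity estimate are routine verifications, and the ``hence not mean sensitive'' is immediate.
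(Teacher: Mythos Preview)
Your proposal is a programme, not a proof. You correctly identify the architecture --- embed a positive-entropy subsystem sparsely inside a subshift so that a transitive point sees the heavy part with vanishing density, while still visiting every cylinder and carrying dense periodic orbits --- and you correctly diagnose the tension between your (ii) and (iii). But you never build $X$: the phrase ``granting such an $X$'' is exactly where the content should be, and your last paragraph concedes this (``the substance of the proof \dots\ is the design of $X$''; ``once the combinatorial construction is pinned down \dots\ routine verifications''). Every verification you list --- that periodic extensions actually lie in $X$, that a long initial segment of the transitive point forces low $Z$-density on \emph{all} continuations in $X$ --- depends on rules you have not written down. A purely local gap rule fails, as you note; ``a hierarchy of spacer scales'' is a description of what is needed, not a construction.

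The paper carries out precisely such a construction, and it is worth seeing where the work goes. Fix a minimal subshift $Y$ of positive entropy, pick $y\in Y$, set $C_n=y_1\cdots y_n$, $A_1=101$, $B_1=C_1$, and recursively
\[
A_{n+1}=A_n\,0^{k_n}\,B_n\,0^{k_n}\,A_n,\qquad
B_{n+1}=C_{n+1}\sqcup_{i=1}^{n}\bigl(A_i\,0^{|A_{i+1}|}\bigr)^{n+1-i},
\]
with $k_n\ge n(2|A_n|+|B_n|)$ and a second growth condition relating $k_m$ to $|B_m|$. With $x=\lim_n A_n 0^\infty$ and $X=\overline{\orb(x,\sigma)}$ one gets $Y\subset X$ (positive entropy) and dense periodic points of the form $(A_n 0^{|A_{n+1}|})^\infty$ --- the blocks $(A_i 0^{|A_{i+1}|})^{n+1-i}$ are inserted into $B_{n+1}$ \emph{precisely} so that these periodic points lie in the orbit closure, which is the first place your sketch is vague. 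The decisive step is a density lemma: any subword of $x$ of length $\ell$ that begins with a copy of $A_n$ contains at most $\bigl(\tfrac{\ell}{|A_n|+2k_n+|B_n|}+2\bigr)(|A_n|+|B_n|)$ ones. Proving this takes four lemmas of careful bookkeeping (tracking how $B_m$ decomposes into pieces of type $A_n 0^{k_{m_i}} B_{m_i} 0^{k_{m_i}}$ and $A_i 0^{|A_{i+1}|}$) and is exactly the ``hierarchy of scales'' estimate you allude to but do not supply; from it one obtains $\limsup_n\frac{1}{n}\sum_{i<n}d(\sigma^i z,0^\infty)<\varepsilon$ for every $z\in[A_m]$ once $m$ is large, so $x$ is a mean-equicontinuity point and $X$ is almost mean equicontinuous.
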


Note that in \cite{LTY13} a transitive non mean sensitive system with
positive entropy was constructed, and also there exist mixing systems that
are not mean sensitive.

\medskip For clarity, we summarise the conditions that do not imply mean
sensitivity:

\begin{itemize}
\item Topological mixing \cite{LTY13}.

\item Minimality plus maximal equicontinuous factor is not 1-1 \cite{DG15}.

\item Cofinite sensitivity (Theorem \ref{thm:Diam-Mean-Sen not Mean-Sen}).

\item Devaney chaos plus positive topological entropy (Theorem \ref%
{thm:Dich-E-system}).

\item Positive entropy plus unique ergodicity (Theorem \ref{unpos}).
\end{itemize}

\medskip For the positive side, the only known conditions that are stronger
than mean sensitivity are

$\ \ \ \ \ \bullet $\ minimality plus positive topological entropy \cite{LTY13, GM14}.

$\ \ \ \ \ \bullet $\ ergodic measure with full support and not purely discrete
spectrum.

We add other properties to the list. The reader is referred to Section \ref%
{sect:Positive-Cond} for details.

\begin{thm}
The following conditions imply mean sensitivity:

\begin{itemize}
\item Topologically mixing, unique ergodicity and positive topological
entropy.

\item Minimality and topological weak mixing.

\item Transitivity and shadowing property with positive topological entropy.
\end{itemize}
\end{thm}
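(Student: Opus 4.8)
My plan is to prove the three implications separately; in each case the goal is to exhibit, in every non‑empty open $U\subseteq X$, two points whose orbits are $\delta$‑separated on Cesàro average along a sequence of times, for a $\delta>0$ depending only on $(X,T)$ — equivalently, to rule out (almost) mean equicontinuity.

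\emph{(i) Topologically mixing, uniquely ergodic, positive entropy.} Let $\mu$ be the unique invariant measure. The variational principle for uniquely ergodic systems gives $h_\mu(T)=h_{\mathrm{top}}(T)>0$, so $(X,\mathcal B,\mu,T)$ does not have discrete spectrum. I would then invoke the measure‑theoretic counterpart of the mean‑equicontinuity dichotomy (García‑Ramos, Downarowicz–Glasner): since $\mu$ does not have discrete spectrum there is $\delta>0$ such that for $\mu$‑a.e.\ $x$ and every $\eta>0$ there is $y$ with $d(x,y)<\eta$ and $\limsup_n\frac1n\sum_{i=0}^{n-1}d(T^ix,T^iy)>\delta$; in particular the subsystem on $\supp\mu$ is already mean sensitive with constant $\delta$. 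The real work is to promote this to $(X,T)$ on all of $X$, and this is where topological mixing is used: one route is to first show that topological mixing together with unique ergodicity forces $\supp\mu=X$ (so that the previous paragraph applies to every $U$); failing that, one uses mixing to connect a small ball in an arbitrary $U$ to a $\mu$‑generic region carrying the $\delta$‑separated pairs above, and realises such a pair inside $U$ along the very time windows on which its Cesàro average exceeds $\delta$ (this care is needed because $\bar d(x,y):=\limsup_n\frac1n\sum_{i<n}d(T^ix,T^iy)$ is only an upper limit of continuous functions, so being \emph{near} a mean‑separated pair does not make a pair mean‑separated). Having excluded mean equicontinuity points, mean sensitivity follows from the dichotomy for transitive systems (almost mean equicontinuous versus mean sensitive).

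\emph{(ii) Minimal and topologically weakly mixing.} Here I would use the dichotomy for minimal systems (Li–Tu–Ye): a minimal t.d.s.\ is either mean sensitive or has a mean equicontinuity point, and in the latter case it is mean equicontinuous. So assume, for contradiction, that our system is mean equicontinuous. Then it is uniquely ergodic and its measure $\mu$ has discrete spectrum; being minimal and uniquely ergodic, its measurable eigenfunctions can be chosen continuous, and as the system is non‑trivial there is a non‑constant continuous $f$ with $f\circ T=\lambda f$, $\lambda\in S^1$. Then $f$ is a factor map onto the non‑trivial equicontinuous minimal rotation $(\overline{f(X)},R_\lambda)$. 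But a topologically weakly mixing system has only the trivial equicontinuous factor: a non‑trivial factor $Y$ would be simultaneously weakly mixing and equicontinuous, which is impossible since then $(y_1,y_2)\mapsto d(y_1,y_2)$ is a non‑constant continuous $(T\times T)$‑invariant function, so $Y\times Y$ is not transitive. This contradiction proves (ii).

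\emph{(iii) Transitivity, shadowing, positive entropy.} Positive topological entropy yields (Downarowicz; Huang–Li–Ye) a \emph{mean Li–Yorke pair}: points $x_1\neq x_2$ with $c:=\limsup_k\frac1k\sum_{j=0}^{k-1}d(T^jx_1,T^jx_2)>0$. I would use transitivity and the shadowing property to plant, inside an arbitrary non‑empty open $U$, two points $p,q$ whose orbits \emph{synchronously re‑track} the orbits of $x_1$ and of $x_2$ over a rapidly growing sequence of time windows $W_j=[a_j,a_j+k_j)$: transitivity connects $U$ to small balls about $x_1,x_2$, shadowing realises the resulting pseudo‑orbits, and by choosing the $k_j$ along a subsequence with $\frac1{k_j}\sum_{l<k_j}d(T^lx_1,T^lx_2)>c/2$ and growing fast enough that $a_j+k_j\le(1+o(1))k_j$, one gets $d(T^mp,T^{m-a_j}x_1)$ and $d(T^mq,T^{m-a_j}x_2)$ uniformly small on $W_j$. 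Then the Cesàro averages of $d(T^mp,T^mq)$ over $[0,a_j+k_j)$ stay $\ge c/2-O(\text{shadowing error})$, so $\bar d(p,q)$ is bounded below by a constant independent of $U$, i.e.\ $(X,T)$ is mean sensitive. To align the window positions of $p$ and $q$ one needs the relevant connecting/return times to be choosable in a fixed congruence class, which is arranged by passing to a chain‑mixing component of the cyclic decomposition of the transitive shadowing system — on that component transitivity plus shadowing yields a specification‑type gluing property with uniformly bounded gaps — and positive entropy descends to that component, supplying the mean Li–Yorke pair there.

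\emph{Main obstacle.} Case (ii) is essentially structural and forced; the genuine difficulties are in (i) and (iii). In (i) it is the transfer from ``$\mu$‑mean sensitive on $\supp\mu$'' to ``mean sensitive on $X$'': because $\bar d$ is not continuous, mixing has to be exploited window‑by‑window (or one must first prove $\supp\mu=X$), and making this transfer rigorous is the heart of the matter. In (iii) the crux is the cyclic decomposition of a transitive shadowing system and the resulting specification/gluing property with uniformly bounded gaps, which is precisely what makes the simultaneous, repeated, synchronised planting of $p$ and $q$ possible.
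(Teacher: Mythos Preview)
Your argument for (ii) is essentially the paper's: both invoke the minimal dichotomy and derive a contradiction from the fact that a weakly mixing system has trivial maximal equicontinuous factor. The paper phrases this via proximality of the maximal equicontinuous factor map for mean equicontinuous minimal systems, while you go through continuous eigenfunctions; these are equivalent routes.

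For (i) and (iii), however, your approaches diverge from the paper's and carry real gaps or unnecessary weight.

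\textbf{Case (i).} You correctly isolate the obstacle --- promoting mean sensitivity from $\supp\mu$ to all of $X$ --- but neither route you propose is actually carried out. That topological mixing plus unique ergodicity forces $\supp\mu=X$ is not established (and not obvious), and the ``window-by-window'' transfer remains a sketch precisely because $\bar d$ is only upper semicontinuous. The paper bypasses this issue entirely by using that in a uniquely ergodic system \emph{every} point is generic. Since $h_\mu(T)>0$ rules out discrete spectrum, there is a continuous non-almost-periodic $f\in L^2(\mu)$, hence $\varepsilon>0$ and an infinite $S\subset\mathbb Z_+$ with $\int|U_T^if-U_T^jf|^2\,d\mu\ge\varepsilon$ for all $i\ne j$ in $S$. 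Topological mixing then supplies, for any open $A$, some $s\ne t\in S$ with $T^{-s}A\cap T^{-t}A\ne\emptyset$; any $z$ there is generic, and $p=T^sz,\ q=T^tz\in A$ satisfy $d_f(p,q)\ge\varepsilon$ by the ergodic theorem applied to the continuous function $|U_T^sf-U_T^tf|^2$. This yields $f$-mean sensitivity, hence mean sensitivity, with no support hypothesis at all.

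\textbf{Case (iii).} Your plan --- repeatedly and synchronously shadow a mean Li--Yorke pair over growing windows --- needs a specification-type gluing, which you propose to extract from a cyclic decomposition of the transitive shadowing system. This may be salvageable, but it is far heavier than required, and the alignment of window positions for the two pseudo-orbits is delicate. The paper's key input (from Li--Li--Tu) is that positive entropy plus transitivity produces a pair $(v_1,v_2)$ that is both a \emph{sensitive pair} and a \emph{distal pair}. Distality gives $d(T^\ell v_1,T^\ell v_2)>3\varepsilon$ for \emph{every} large $\ell$, not just on average; the sensitive-pair property then yields, inside any open $U$, points $x_1,x_2$ and a time $n$ with $T^n x_i$ in a small ball about $v_i$. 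A \emph{single} application of shadowing to each pseudo-orbit $x_i,Tx_i,\ldots,T^{n-1}x_i,v_i,Tv_i,\ldots$ produces $y_1,y_2\in U$ with $d(T^k y_1,T^k y_2)>\varepsilon$ for all sufficiently large $k$ --- strictly stronger than mean sensitivity. No cyclic decomposition, no specification, no window bookkeeping.
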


There are several applications to the theory and counterexamples we
constructed in the following areas: (1) equicontinuity/sensitivity
dichotomies, (2) locally mean equicontinuous systems and (3) dynamical
theory of hyperspaces.

\medskip As for dichotomies, it was shown in \cite{AY80} that a minimal
t.d.s. is either equicontinuous or sensitive. This result was generalized by
showing that every $E$-system is either equicontinuous or sensitive, see
\cite[Theorem 1.3]{GW93} or \cite[Theorem 4.6]{HY02}. Recall that t.d.s. is
an $E$\textit{-system} if it is transitive and there exists an invariant
measure with full support.

For mean notions we know that a minimal t.d.s. is either mean equicontinuous
or mean sensitive \cite{LTY13, GM14}. In \cite[Question 3.7.3]{Tu14} Tu
wondered if each $E$-system is either mean sensitive or mean equicontinuous.
Since every Devaney chaotic t.d.s. is an $E$-system and mean equicontinuous
systems have zero entropy (\cite{LTY13, GM14}) we obtain for the above
question a negative answer as a corollary of Theorem \ref{thm:Dich-E-system}.

\begin{cor}
\label{Dich-E-Cor}There exist $E$-systems which are neither mean sensitive
nor mean equicontinuous.
\end{cor}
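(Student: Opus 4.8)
The plan is to read off the conclusion from Theorem~\ref{thm:Dich-E-system} together with two facts already recorded in the Introduction. Let $(X,T)$ be a t.d.s.\ as produced by Theorem~\ref{thm:Dich-E-system}: it is Devaney chaotic, has positive topological entropy, and is almost mean equicontinuous (in particular not mean sensitive). I claim this single system witnesses the corollary.

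First I would check that $(X,T)$ is an $E$-system. By definition of Devaney chaos it is transitive, so it remains to exhibit an invariant measure of full support. Since the periodic points are dense, pick a countable dense set $\{x_k\}_{k\geq 1}$ of periodic points, let $\mu_k$ be the normalised counting measure on the (finite) $T$-orbit of $x_k$, and set
\[
\mu \;=\; \sum_{k\geq 1} 2^{-k}\,\mu_k .
\]
Each $\mu_k$ is $T$-invariant, hence so is $\mu$, and $\supp(\mu) \supseteq \overline{\{x_k : k\geq 1\}} = X$. Thus $(X,T)$ is an $E$-system; this is exactly the standard observation, quoted above, that every Devaney chaotic system is an $E$-system.

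Next, $(X,T)$ is \emph{not} mean sensitive: this is part of the statement of Theorem~\ref{thm:Dich-E-system}, the point being that an almost mean equicontinuous system possesses a point of mean equicontinuity, which is incompatible with mean sensitivity. Finally, $(X,T)$ is \emph{not} mean equicontinuous, because mean equicontinuous systems have zero topological entropy (\cite{LTY13, GM14}), whereas $(X,T)$ has positive entropy by construction. Assembling the three points, $(X,T)$ is an $E$-system that is neither mean sensitive nor mean equicontinuous, which proves the corollary.

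There is no genuine obstacle at this stage: the entire difficulty is concentrated in Theorem~\ref{thm:Dich-E-system}, which we are entitled to assume. The only step that is not purely a citation is the construction of the full-support invariant measure from dense periodic points, and that is routine.
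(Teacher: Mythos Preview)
Your proof is correct and follows the same route as the paper: invoke Theorem~\ref{thm:Dich-E-system}, use that Devaney chaotic implies $E$-system, and use that mean equicontinuous systems have zero entropy to rule out mean equicontinuity. The only difference is that you spell out the (standard) construction of a full-support invariant measure from dense periodic points, which the paper simply cites as a known fact.
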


Note that in \cite[Theorem 13]{GM14} it was shown that a strongly transitive
system (this property is stronger than $E$-system) is either mean sensitive
or mean equicontinuous. Here we show that systems which are transitive and
have dense minimal points (so called \textit{$M$-systems}) are either Banach
mean equicontinuous or Banach-mean sensitive (Corollary \ref{cor:Dich-M-system}%
), and systems which are transitive and have shadowing property (stronger
than $M$-systems) are either mean sensitive or mean equicontinuous
(Corollary \ref{cor:Trans+ShadowProp}).

\medskip In \cite{GW00} Glasner and Weiss introduced the concept of local
equicontinuity. A t.d.s. $(X,T)$ is \textit{locally equicontinuous (LE) }if
for every $x\in X$ we have that $\overline{\orb(x,T)}$ is almost
equicontinuous. Inspired by this, we say that a t.d.s. $(X,T)$ is \textit{%
locally mean equicontinuous (LME) }if for every $x\in X$ we have that $%
\overline{\orb(x,T)}$ is almost mean equicontinuous. In Section \ref%
{sect:LME} we prove an application of Theorem \ref{thm:Dich-E-system} by
showing the following :

\begin{thm}
\label{thm:LME} Similarly to LE systems LME systems have zero topological
entropy. Nonetheless, contrary to LE systems, ergodic measures on LME
systems may be supported on non-minimal subsystems.
\end{thm}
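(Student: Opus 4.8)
The plan is to establish the two assertions separately.

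For the entropy bound we reduce to measure theory via the variational principle together with the known link between mean equicontinuity and discrete spectrum. Write $h_{top}(X,T)=\sup h_\mu(T)$, the supremum taken over ergodic $T$-invariant Borel probability measures $\mu$. Fix such a $\mu$ and put $Y=\supp(\mu)$; since $\mu(\supp\mu)=1$ forces $T(\supp\mu)\subseteq\supp\mu$, the pair $(Y,T|_Y)$ is a subsystem of $(X,T)$. By the Birkhoff theorem $\mu$-almost every $x$ is generic for $\mu$, and such an $x$ lies in $Y$ with $\overline{\orb(x,T)}=Y$; in particular $(Y,T|_Y)$ is point transitive and $\mu$-a.e.\ point is a transitive point of it. Applying the hypothesis to such an $x$ gives that $(Y,T|_Y)=\overline{\orb(x,T)}$ is almost mean equicontinuous, and in a transitive t.d.s.\ the (nonempty, residual) set of mean equicontinuous points contains every transitive point, exactly as for ordinary equicontinuity points (cf.\ \cite{LTY13}). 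Hence $\mu$-a.e.\ point of $Y$ is a mean equicontinuous point of $(Y,T|_Y)$, and an Egorov-type argument upgrades this to $\mu$-mean equicontinuity of $(Y,T|_Y)$. By the characterization of discrete spectrum through $\mu$-mean equicontinuity for ergodic measures (\cite{GM14}, see also \cite{DG15}), $\mu$ has discrete spectrum, so $h_\mu(T)=0$. As $\mu$ was arbitrary, $h_{top}(X,T)=0$.

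For the second assertion we must build a concrete LME system carrying an ergodic measure whose support is not minimal. The difficulty here is structural: on a system with a proper minimal subset the points lying outside a minimal subset tend to be wandering, so an ergodic measure with non-minimal support needs generic orbits that are recurrent yet repeatedly approach --- without being trapped by --- a strictly smaller minimal set. The plan is to realize this with a cutting-and-stacking (tower) construction over a minimal equicontinuous base, say an odometer $(O,S,\nu)$, whose ceiling (spacer) function is unbounded but grows slowly and is measurable with respect to $O$, hence ``deterministic''. Coded symbolically this yields a subshift $X$; because arbitrarily long runs of the spacer symbol occur, the fixed point $0^{\infty}$ lies in $X$, so $X$ is not minimal, whereas the tower measure $\mu$ extending $\nu$ is ergodic with $\supp(\mu)=X$. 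Abramov's formula gives $h_\mu(T)=0$, and once $X$ is known to be LME the discrete spectrum of $\mu$ follows from the argument above. (Alternatively one may rerun the construction behind Theorem~\ref{thm:Dich-E-system} with its positive-entropy ingredient replaced by a zero-entropy one.)

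The heart of the matter --- and the main obstacle --- is the verification that $X$ is locally mean equicontinuous. The proper subsystems of $X$ are $\{0^{\infty}\}$ and the ``one- and two-sided degenerations'' of the tower, i.e.\ orbit closures of points that are eventually the spacer symbol in one or both directions; each of these is almost (indeed fully) mean equicontinuous by a direct estimate, since the only discrepancy between nearby points sits in the tails and contributes nothing to the Ces\`aro average. For $X$ itself the point is that two points agreeing on a long central window come from base points that are close in the equicontinuous system $O$: equicontinuity of $O$ keeps their spacer patterns aligned for all time up to a bounded phase error, and the slow growth of the ceiling function controls the resulting averaged distance. Choosing the ceiling function so that this estimate actually produces a residual set of mean equicontinuous points is the delicate step, and the place where ``deterministic, slowly growing'' spacers (rather than, say, i.i.d.\ gaps, which would force positive entropy) are needed. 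Granting this, $\{0^{\infty}\}$ witnesses that $X=\supp(\mu)$ is non-minimal while $\mu$ is ergodic, so $\mu$ is an ergodic measure on the LME system $X$ supported on a non-minimal subsystem, in contrast with the LE case, where every ergodic measure lives on a minimal subsystem (\cite{GW00}).
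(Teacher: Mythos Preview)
Your argument for the entropy bound is correct and is essentially the paper's argument read in the direct direction rather than by contradiction: both reduce to the fact that an ergodic measure with full support on a transitive, almost mean equicontinuous system has discrete spectrum (Theorem~\ref{thm:meanSen-SuffCondition}\eqref{thm:meanSen-SuffCondition:2} in contrapositive form). The detour through ``Egorov upgrades pointwise mean equicontinuity to $\mu$-mean equicontinuity'' is unnecessary; once $Y=\supp(\mu)$ is known to be almost mean equicontinuous it is not mean sensitive, and the cited result forces discrete spectrum directly.

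For the second assertion, however, your tower-over-odometer plan remains a plan. You explicitly flag the crux --- choosing a ceiling so that the resulting subshift is LME --- and then write ``Granting this''; that is precisely the point that needs an argument. Moreover, LME is a condition on \emph{every} orbit closure, not just on $X$ and the obvious degenerations, and you have not established that your list of proper subsystems is exhaustive for a general tower construction of this type.

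The paper sidesteps all of this by doing exactly what you mention parenthetically: it reruns the construction behind Theorem~\ref{thm:Dich-E-system} with the minimal input $Y=\{0^{\infty}\}$. The payoff is that the structure of the construction makes the subsystem analysis trivial: any point not on the orbit of the transitive point $x$ is either periodic (of the form $\sigma^t(A_n 0^{|A_{n+1}|})^{\infty}$) or of the form $B0^{\infty}$ with $B$ a suffix of some $A_n$, so its orbit closure is finite and hence certainly almost mean equicontinuous. LME then follows immediately from Proposition~\ref{prop:P-system} (almost mean equicontinuity of $X$ itself) plus this observation, and the system is a non-minimal $E$-system because it is Devaney chaotic. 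This is both shorter and avoids the delicate estimate you identify as the obstacle.
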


\medskip In `point' forms of sensitivity (like mean sensitivity) we care
about the behaviour of points. In `diameter' forms of sensitivity we pay
attention to the behaviour of sets. There is a general way to study the
behaviour of sets: the study on hyperspatial dynamical systems. Given a
t.d.s. $(X,T)$, we can naturally induce a t.d.s. $(K(X),T_{K})$, where $K(X)$
is the hyperspace consisting of all non-empty closed subsets of $X$ and
endowed with the Hausdorff metric (cf. \cite{Nad92}). Bauer and Sigmund in
\cite{BS75} initiated a systematic study on the connections between dynamics
of $(X,T)$ and $(K(X),T_{K})$. In particular, they proved that 
$(X,T)$ is weakly mixing if and only if $K(X)$ is weakly mixing. Later,
Banks \cite{Ban05} showed that on $(K(X),T_{K})$ weak mixing is equivalent
to transitivity. Very recently, Wu et al. \cite{WWC15} summarized the
connections on $\mathcal{F}$-sensitivity (where $\mathcal{F}$ is a
Furstenberg family), and particularly they showed that $\mathcal{F}$%
-sensitivity of $(K(X),T_{K})$ implies that of $(X,T)$, and the converse is
also true when additionally $\mathcal{F}$ is a filter.
We refer the interested reader to \cite{LOW16, LOYZ15, LYY13} for further
connections.

In this paper we will study how mean forms of sensitivity and equicontinuity
behave on hyperspaces. Following definition and a bit of work (similarly as
in \cite[Corollary 1]{WWC15}) it is not hard to see that if $(K(X),T_{K})$ is
diam-mean sensitive then so is $(X,T)$. Nonetheless, using the example in
the proof of Theorem \ref{thm:Diam-Mean-Sen not Mean-Sen} we show that for
mean sensitivity this does not happen.

\begin{thm}
\label{thm:K(X)-Mean-Sen-X-mean-Equi} There exists a transitive t.d.s. $%
(X,T) $ with zero topological entropy that is Banach-mean equicontinuous
(thus not mean sensitive) such that $(K(X),T_{K})$ is mean sensitive and has
positive topological entropy .
\end{thm}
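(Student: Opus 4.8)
The plan is to reuse the system $(X,T)$ from Theorem~\ref{thm:Diam-Mean-Sen not Mean-Sen}, which is already known to be transitive, cofinitely sensitive, Banach-mean equicontinuous, and (since Banach-mean equicontinuous systems have zero entropy) of zero topological entropy. So the only new work is to verify that the induced hyperspace $(K(X),T_K)$ is mean sensitive and has positive topological entropy. I would first recall the concrete construction of $X$ from the earlier proof---presumably a subshift, or a Sturmian-like system with a blow-up/spacer construction producing the cofinite sensitivity---and keep explicit track of the ``witnessing'' gaps: for each open set $U\subseteq X$ one has $\diam(T^iU)>\delta$ for all but finitely many $i$. The key point is that cofinite sensitivity of $(X,T)$ upgrades, on the hyperspace, into genuine \emph{mean} (even point) sensitivity.

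The first main step is the implication: if $(X,T)$ is cofinitely sensitive with constant $\delta$, then $(K(X),T_K)$ is mean sensitive with constant $\delta/2$ (say). Fix a nonempty open set $\mathcal V\subseteq K(X)$ in the Hausdorff topology; shrinking, we may assume it contains a basic open set determined by finitely many open sets $U_1,\dots,U_k\subseteq X$. Pick a point $A\in\mathcal V$, and inside one of the $U_j$ choose, using cofinite sensitivity, two points whose orbits stay $\delta$-apart at cofinitely many times; adjoining one or the other of these two points to $A$ produces two elements $A_1,A_2\in\mathcal V$ with $d_H(T^i_KA_1,T^i_KA_2)\ge\delta/2$ for cofinitely many $i$, hence the Cesàro average of $d_H(T^i_KA_1,T^i_KA_2)$ is $\ge\delta/2$ in the limit (not merely the limsup). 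That gives mean sensitivity of $(K(X),T_K)$ with room to spare. I expect this step to be essentially bookkeeping, the only subtlety being to confirm that the two chosen points can indeed be taken inside a single $U_j$ and that perturbing one coordinate of $A$ keeps us inside the basic neighbourhood---which is exactly what cofinite sensitivity, applied to $U_j$, delivers.

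The second main step is positive topological entropy of $(K(X),T_K)$. Here the engine is a classical fact (due to Bauer--Sigmund / Glasner--Weiss type arguments) that the hyperspace can have positive entropy even when the base does not: roughly, $h_{\mathrm{top}}(T_K)$ detects the exponential growth of the number of distinguishable finite configurations of points, and a system that is ``spread out'' enough---e.g. one containing arbitrarily long orbit segments that separate---forces $2^{n}$-type growth of $(n,\epsilon)$-separated sets of finite subsets. Concretely I would exhibit, for suitable $\epsilon>0$ and each $n$, a collection of $\ge 2^{cn}$ finite subsets of $X$ that are pairwise $(n,\epsilon)$-separated under $T_K$: using cofinite sensitivity, for each $i$ in a long window pick a pair of points $\delta$-separated at time $i$, and encode an arbitrary binary string of length $n$ by which point of each pair is included; the Hausdorff distance then distinguishes these sets at the corresponding times. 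Making the windows honestly independent is the place where one must be careful---this is \textbf{the main obstacle}: one needs the separating pairs at different times to be ``simultaneously realisable'' inside one finite set without collapsing, which may require invoking a specific combinatorial feature of the construction in Theorem~\ref{thm:Diam-Mean-Sen not Mean-Sen} (for instance that it is built over a full shift on spacers, or that it has a large independence set in the sense of IT-tuples) rather than cofinite sensitivity alone. Once the separated-set count is in hand, the definition of topological entropy gives $h_{\mathrm{top}}(T_K)\ge c>0$, and combining the two steps with the already-established properties of $(X,T)$ completes the proof.
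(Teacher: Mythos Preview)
Your first main step contains a genuine gap. Cofinite sensitivity says that for each open $U$ the set $\{n:\diam(T^nU)>\delta\}$ is cofinite; it does \emph{not} produce a single pair $x,y\in U$ with $d(T^nx,T^ny)>\delta$ for cofinitely many $n$---the witnessing pair may change with $n$. In fact, in the very system from Theorem~\ref{thm:Diam-Mean-Sen not Mean-Sen} that you propose to use, no such pair can exist: the system is Banach-mean equicontinuous, so for \emph{every} pair $x,y$ the set $\{n:d(T^nx,T^ny)>\delta\}$ has upper Banach density zero, let alone cofinite complement. Thus adjoining a single point to $A$ cannot give mean separation in $K(X)$. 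The paper's fix is to adjoin an \emph{infinite} closed set $Q$: for each time $i$ beyond a threshold, include in $Q$ a point of the form $w\,0^{j}10^{\infty}$ (the same witnesses that showed cofinite sensitivity), so that at every time $i$ some member of $Q$ begins with a $1$. Meanwhile the finite set $P\subset\orb(x,\sigma)$ has, by Lemma~\ref{lem:Density-2}, all its members beginning with $0$ on a set of times of density one; on those times $d_H(\sigma_K^iP,\sigma_K^iQ)=1$. This is the essential idea you are missing.

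For the entropy of $K(X)$, your plan is unnecessarily hard and, as you yourself flag, the independence of the separating pairs is a real obstacle. The paper avoids this entirely: it observes that $X$ contains the countable subsystem $Y=\{x\in\Sigma_2^+: x \text{ has at most one }1\}$, and cites the known fact (Kwietniak--Oprocha) that $(K(Y),\sigma_K)$ already has positive topological entropy. Since $K(Y)\subset K(X)$, this finishes the argument in one line.
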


Note that in \cite{GW95} Glasner and Weiss constructed the first example of
a t.d.s. $(X,T)$ that is minimal and has zero topological entropy but $%
(K(X),T_{K})$ has positive topological entropy. These results indicate that
induced dynamics on hyperspaces admit more complicated behaviours than
dynamics on the phase space.

We also show that on the hyperspace mean equicontinuity and diam-mean
equicontinuity are equivalent (Corollary \ref%
{cor:K(X)-Diam-MeanEqui=MeanEqui}); for mean sensitivity we have the
following result.

\begin{thm}
\label{thm:KK(X)-Mean-Sen=Diam-Mean-Sen} Let $(X,T)$ be a t.d.s. and $%
(K(X),T_K)$ be its induced hyperspatial t.d.s. Consider the following
statements:

\begin{enumerate}
\item \label{thm:Mean-Sen=Diam-Mean-Sen:1} $(K(X),T_{K})$ is diam-mean
sensitive,

\item \label{thm:Mean-Sen=Diam-Mean-Sen:2} $(K(K(X)),T_{K})$ is diam-mean
sensitive,

\item \label{thm:Mean-Sen=Diam-Mean-Sen:3} $(K(X),T_{K})$ is mean sensitive,

\item \label{thm:Mean-Sen=Diam-Mean-Sen:4} $(K(K(X)),T_{K})$ is mean
sensitive.
\end{enumerate}

Then \eqref{thm:Mean-Sen=Diam-Mean-Sen:1} and %
\eqref{thm:Mean-Sen=Diam-Mean-Sen:2} are equivalent. If additionally $(X,T)$
is weakly mixing, \eqref{thm:Mean-Sen=Diam-Mean-Sen:1}--%
\eqref{thm:Mean-Sen=Diam-Mean-Sen:4} are all equivalent.
\end{thm}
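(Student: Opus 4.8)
The plan is to obtain $\eqref{thm:Mean-Sen=Diam-Mean-Sen:1}\Leftrightarrow\eqref{thm:Mean-Sen=Diam-Mean-Sen:2}$ unconditionally from the functoriality of the hyperspace construction, and then to deduce the weakly-mixing equivalences from these together with one lemma: \emph{a weakly mixing, diam-mean sensitive t.d.s.\ is mean sensitive}. The backbone is a dictionary of maps. The singleton map $\iota\colon A\mapsto\{A\}$ is an isometric, equivariant embedding of one hyperspace into the next (the Hausdorff distance between $\{A\}$ and $\{B\}$ in $K(K(X))$ equals that between $A$ and $B$ in $K(X)$). The union map $u\colon\mathcal A\mapsto\bigcup\mathcal A$ from $K(K(X))$ to $K(X)$ is well defined (since $\bigcup\mathcal A$ is closed whenever $\mathcal A$ is a closed subset of $K(X)$) and is a continuous, open, surjective, equivariant, $1$-Lipschitz map with $u\circ\iota=\mathrm{id}$; continuity and openness of $u$ are classical facts about hyperspaces, see \cite{Nad92}. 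Finally, for a nonempty open $\mathcal V\subseteq K(X)$ the Vietoris set $\langle\mathcal V\rangle:=\{\mathcal A\in K(K(X)):\emptyset\neq\mathcal A\subseteq\mathcal V\}$ is a nonempty open subset of $K(K(X))$ containing $\iota(\mathcal V)$, and one checks directly that $\diam(T_K^{\,n}\langle\mathcal V\rangle)=\diam(T_K^{\,n}\mathcal V)$ for every $n$ --- the inequality $\le$ because each member of $T_K^{\,n}\langle\mathcal V\rangle$ is a closed subset of $T_K^{\,n}\mathcal V$, and $\ge$ because $T_K^{\,n}\langle\mathcal V\rangle\supseteq\iota(T_K^{\,n}\mathcal V)$ and $\iota$ is isometric.

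To prove $\eqref{thm:Mean-Sen=Diam-Mean-Sen:1}\Leftrightarrow\eqref{thm:Mean-Sen=Diam-Mean-Sen:2}$: the implication $\eqref{thm:Mean-Sen=Diam-Mean-Sen:2}\Rightarrow\eqref{thm:Mean-Sen=Diam-Mean-Sen:1}$ is the fact recorded just before Theorem~\ref{thm:K(X)-Mean-Sen-X-mean-Equi}, applied with $(X,T)$ replaced by $(K(X),T_K)$; alternatively, apply $\eqref{thm:Mean-Sen=Diam-Mean-Sen:2}$ to $\langle\mathcal V\rangle$ and use $\diam(T_K^{\,n}\langle\mathcal V\rangle)=\diam(T_K^{\,n}\mathcal V)$ to transport the averaged lower bound from $\langle\mathcal V\rangle$ to $\mathcal V$, keeping the same sensitivity constant. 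For $\eqref{thm:Mean-Sen=Diam-Mean-Sen:1}\Rightarrow\eqref{thm:Mean-Sen=Diam-Mean-Sen:2}$, given a nonempty open $\mathcal W\subseteq K(K(X))$, the image $u(\mathcal W)$ is a nonempty open subset of $K(X)$ because $u$ is open; diam-mean sensitivity of $K(X)$ yields $\limsup_n\tfrac1n\sum_{i<n}\diam(T_K^{\,i}u(\mathcal W))>\delta$; and since $u$ is equivariant and $1$-Lipschitz, $\diam(T_K^{\,i}u(\mathcal W))=\diam(u(T_K^{\,i}\mathcal W))\le\diam(T_K^{\,i}\mathcal W)$, so $K(K(X))$ is diam-mean sensitive with the same $\delta$.

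Now assume additionally that $(X,T)$ is weakly mixing. By the Bauer--Sigmund theorem \cite{BS75} $(K(X),T_K)$ is weakly mixing, and applying it once more $(K(K(X)),T_K)$ is weakly mixing as well. Since mean sensitivity always implies diam-mean sensitivity, we have $\eqref{thm:Mean-Sen=Diam-Mean-Sen:3}\Rightarrow\eqref{thm:Mean-Sen=Diam-Mean-Sen:1}$ and $\eqref{thm:Mean-Sen=Diam-Mean-Sen:4}\Rightarrow\eqref{thm:Mean-Sen=Diam-Mean-Sen:2}$; combined with $\eqref{thm:Mean-Sen=Diam-Mean-Sen:1}\Leftrightarrow\eqref{thm:Mean-Sen=Diam-Mean-Sen:2}$, it remains to prove $\eqref{thm:Mean-Sen=Diam-Mean-Sen:1}\Rightarrow\eqref{thm:Mean-Sen=Diam-Mean-Sen:3}$ and $\eqref{thm:Mean-Sen=Diam-Mean-Sen:2}\Rightarrow\eqref{thm:Mean-Sen=Diam-Mean-Sen:4}$, and both are exactly the lemma above applied to the weakly mixing systems $(K(X),T_K)$ and $(K(K(X)),T_K)$.

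The whole argument thus rests on that lemma, which I expect to be the main obstacle. The point is that diam-mean sensitivity with constant $\delta$ controls, for each nonempty open $U$, the Ces\`aro average of $\diam(S^iU)=\sup_{x,y\in U}d(S^ix,S^iy)$, whereas mean sensitivity demands a \emph{single} pair $x,y\in U$ whose average $\tfrac1n\sum_{i<n}d(S^ix,S^iy)$ stays above a fixed level; one must move the supremum out through the $\limsup$, and weak mixing is the tool. The attempt I would make: from $\limsup_n\tfrac1n\sum_{i<n}\diam(S^iU)>\delta$ extract a set $\mathcal T\subseteq\mathbb N$ of positive upper density along which one can choose disjoint open balls $P_i,Q_i$ with $d(P_i,Q_i)>\delta/3$ and $S^{-i}P_i\cap U\neq\emptyset\neq S^{-i}Q_i\cap U$, and then use weak mixing of $S\times S$ (weak mixing is productive) to produce a single $(x,y)\in U\times U$ landing in $P_i\times Q_i$ for a positive-density subset of $\mathcal T$; a second, possibly cleaner, route is via the dichotomy ``transitive $\Rightarrow$ almost mean equicontinuous or mean sensitive'', showing that weak mixing together with diam-mean sensitivity rules out almost mean equicontinuity. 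Making this synchronization precise --- reconciling the ``supremum inside'' of diam-mean sensitivity with the ``supremum outside'' of mean sensitivity using only topological weak mixing --- is where I anticipate the real difficulty.
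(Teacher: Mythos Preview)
Your treatment of $\eqref{thm:Mean-Sen=Diam-Mean-Sen:1}\Leftrightarrow\eqref{thm:Mean-Sen=Diam-Mean-Sen:2}$ is essentially the paper's argument: the paper uses that $u$ is a semi-open factor (Lemma~\ref{lem:semi-open}) to lift diam-mean sensitivity from $K(X)$ to $K(K(X))$ (Lemma~\ref{lem:lift}), and the reverse direction is Lemma~\ref{lem:F-Sen} applied to $K(X)$. One caveat: the paper only establishes that $u$ is \emph{semi}-open, not open, and I do not believe openness is recorded in \cite{Nad92}. This does not hurt you---replace $u(\mathcal W)$ by its interior and your inequality $\diam(T_K^i\,\mathrm{int}\,u(\mathcal W))\le\diam(T_K^i\mathcal W)$ still holds---but the claim as written is unjustified.

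The real problem is your key lemma. The assertion ``a weakly mixing, diam-mean sensitive t.d.s.\ is mean sensitive'' is \emph{false}. As the paper recalls in the Introduction (see the bullet list and the reference to \cite{LTY13}), there exist nontrivial topologically mixing systems that are not mean sensitive. Any such system is weakly mixing, and nontrivial mixing forces cofinite sensitivity (for disjoint balls $V_1,V_2$ at distance $>\delta$, mixing gives $T^nU\cap V_i\neq\emptyset$ for all large $n$, hence $\diam(T^nU)>\delta$ cofinitely), which in turn implies diam-mean sensitivity. So both of your proposed routes collapse: the direct synchronization cannot succeed, and in the dichotomy route ``weak mixing plus diam-mean sensitivity'' simply does not rule out almost mean equicontinuity.

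What the paper actually does is exploit a feature \emph{specific to hyperspaces}: Theorem~\ref{thm:K(X)-Almo-MeanEqui=Almo-DiamMeanEqui} shows that in $K(X)$ a point is diam-mean equicontinuous if and only if it is mean equicontinuous. The mechanism is that inside any Vietoris basic neighbourhood $\langle B(x_1,\delta'),\dots,B(x_m,\delta')\rangle$ one can pick two canonical members $E=\{x_1,\dots,x_m\}$ and $F=\bigcup_i\overline{B(x_i,\delta')}$ whose orbit Hausdorff distance already controls the diameter of the iterated neighbourhood (any $B$ in the closure satisfies $E\subset B\subset F$, hence $d_H(T_K^iB,T_K^iE)\le d_H(T_K^iF,T_K^iE)$). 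Given this, Corollary~\ref{cor:K(X)-Diam-MeanSen=MeanSen} follows: if $(X,T)$ is weakly mixing then $(K(X),T_K)$ is transitive (Banks \cite{Ban05}), the two dichotomies for transitive systems apply, and since the two kinds of equicontinuity points coincide on $K(X)$, so do the two kinds of sensitivity. Applying the same corollary with $X$ replaced by $K(X)$ (which is again weakly mixing by \cite{BS75}) handles $K(K(X))$. You should replace your general lemma by this hyperspace-specific equivalence.
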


\medskip \textbf{Acknowledgments}

A part of this paper was done when the second author visited the Chinese
University of Hong Kong. He acknowledges the hospitality of the Mathematics
Department at the University. The authors also thank Jian Li, Siming Tu and
Xiangdong Ye for very useful suggestions and valuable comments. The authors
also thank the referee for   valuable suggestions and pointing out Remark 2.1 and
Remark 2.2 to us.

During the period of research Felipe Garcia Ramos was supported by NSERC,
IMPA and CAPES, Jie Li (corresponding author) was supported by NSF of Jiangsu Province (BK20170225), China Postdoctoral Science Foundation (2017M611026) and NNSF of China (11371339, 11571335),
and Ruifeng Zhang was supported by NNSF of China (11171320, 11671094).

\section{Preliminaries}

\label{sect:preliminaries}

Throughout this paper, we denote by ${\mathbb{Z}}_{+}$, and $\mathbb{N}$ the
sets of non-negative integers and natural numbers respectively.

\medskip \textbf{$\bullet$ Subsets of ${\mathbb{Z}}_{+}$ } \medskip

Let $F$ be a subset of ${\mathbb{Z}}_{+}$. The \textit{density} and \textit{%
upper Banach density} of $F$ are defined by
\begin{equation*}
\overline{D}(F)=\lim \sup_{n\rightarrow \infty }\frac{\#\{F\cap \lbrack
0,n-1]\}}{n}
\end{equation*}%
and
\begin{equation*}
BD^{\ast }(F)=\limsup_{N-M\rightarrow \infty }\frac{\#\{F\cap \lbrack
M,N-1]\}}{N-M}=\limsup_{n\rightarrow \infty }\left\{ \sup_{N-M=n}\frac{%
\#\{F\cap \lbrack M,N-1]\}}{n}\right\} ,
\end{equation*}%
where $\#\{\cdot \}$ denotes the cardinality of the set. It is clear that $%
\overline{D}(F)\leq BD^{\ast }(F)$ for any $F\subset {\mathbb{Z}}_{+}$.

\medskip We say that a subset $F\subset{\mathbb{Z}}_{+}$ is \textit{syndetic}
if there is an $n\in\mathbb{N}$ such that $F\cap\{m,m+1,\ldots,m+n\}\neq
\emptyset$ for any $m\in{\mathbb{Z}}_{+}$; and is \textit{thick} if there
are $n_{i}\to\infty$ such that $F\supset\cup_{i=1}^{\infty}\{n_{i},
n_{i}+1,\ldots, n_{i}+i\}$.

\medskip \textbf{$\bullet$Topological dynamics} \medskip

Let $(X,T)$ be a t.d.s., $x\in X$ and $U$ a neighbourhood of $x$. We denote
the orbit of $x$ by $\orb(x,T)=\{x,Tx,\ldots \}$, its orbit closure by $%
\overline{\orb(x,T)}$, and the return times of $x$ to $U$ as $N(x,U)=\{n\in {%
\mathbb{Z}}_{+}\colon T^{n}x\in U\}.$

We say that a point $x$ is \textit{periodic} if $T^n x=x$ for some $n\in \Z_+$; is
\textit{transitive} if $\overline{\orb(x,T)}=X;$ is \textit{recurrent} if $%
N(x,U)$ is non-empty for any neighbourhood $U$ of $x$ and is \textit{minimal}
if $N(x,U)$ is syndetic for any neighbourhood $U$ of $x$.

A t.d.s. $(X,T)$ is a \textit{transitive system} if for any non-empty open
sets $U,V\subset X$ there exists $n\in \mathbb{N}$ such that $T^{n}U\cap
V\neq \emptyset $; is \textit{weakly mixing} if the product t.d.s. $X\times
X $ is transitive; and is a \textit{minimal system} if every point of $X$ is
transitive. It is well known that if $x$ is recurrent then $\overline{\orb%
(x,T)}$ is transitive.

Given a t.d.s. $(X,T)$, we denote $M(X,T)$ by the collection of all $T$%
-invariant Borel probability measures on $X$. It is well known that $M(X,T)$
is always non-empty. Let $\mu \in M(X,T)$. We define its \textit{support} by
$\supp(\mu )=\{x\in X\colon \mu (U)>0\text{ for any neighbourhood }U\text{ of
}x\}$.

We say that a t.d.s. $(X,T)$ is

\begin{enumerate}[$\ \ \ \ \cdot$]
\item  \textit{Devaney chaotic} if it is transitive and the set of periodic
points is dense;

\item an \textit{$M$-system} if it is transitive and the set of minimal
points is dense;

\item an \textit{$E$-system} if it is transitive and there exists an
invariant measure with full support.
\end{enumerate}

Every Devaney chaotic system is an $M$-system and each $M$-system is an $E$%
-system.

Let $(X,T)$ and $(Y,S)$ be two t.d.s. and $\pi \colon X\rightarrow Y$ a continuous function.
We say that $\pi $ is a \textit{factor map} if $\pi $ is surjective
and satisfies that $\pi \circ T=S\circ \pi $. In this case we say $X$ as an \textit{extension}
of $Y$ or $Y$ as a \textit{factor} of $X$. The factor $\pi$ is said to be \textit{almost one to one} if there exists a residual subset (i.e. contains a dense $G_{\delta }$ set) $G$ such that $\pi^{-1}(\pi(x))=\{x\}$ for all $x\in G$.

We refer the reader not familiar with topological entropy to the textbook
\cite{Wal82}.

\medskip \textbf{$\bullet $ Various forms of equicontinuity and sensitivity}
\medskip

We have defined various forms of sensitivity in Section \ref{sect:Intro}.

Let $(X,T)$ be a t.d.s. and $x\in X$. We say that the point $x\in X$ is

\begin{enumerate}[$\ \ \ \ \cdot$]
\item an \textit{(ordinary) equicontinuous point} if for any $\varepsilon>0$ there is a
$\delta>0$ such that for every $y\in X$ with $d(x,y)<\delta$, we have $%
d(T^{n} x, T^{n} y)<\varepsilon$ for all $n\in{\mathbb{Z}}_{+}$;

\item a \textit{mean equicontinuous point} if for any $\varepsilon>0$ there
is a $\delta>0$ such that for every $y\in X$ with $d(x,y)<\delta$, $%
\limsup_{n\to\infty} \frac{1}{n}\sum_{i=0}^{n-1}d(T^{i} x,T^{i}
y)<\varepsilon$;

\item a \textit{Banach-mean equicontinuous point} if for any $\varepsilon>0$
there is a $\delta>0$ such that for every $y\in X$ with $d(x,y)<\delta$, $%
\limsup_{N-M\to\infty}\frac{1}{N-M}\sum_{i=M}^{N-1}d(T^{i} x,T^{i}
y)<\varepsilon$;

\item a \textit{diam-mean equicontinuous point} if for any $\varepsilon>0$
there is a neighbourhood $U$ of $x$ such that $\limsup_{n\to\infty}\frac{1}{n}%
\sum_{i=0}^{n-1}\diam(T^{i} U)<\varepsilon$.
\end{enumerate}

A t.d.s. $(X,T)$ is \textit{$Q$-equicontinuous} (where $Q=$ ordinary or mean or Banach-mean or diam-mean) if all points in $X$ are $Q$-equicontinuous; and
is \textit{almost $Q$-equicontinuous} if the set of $Q$-equicontinuous points is residual. 

\begin{rem}\label{rem:equi}
\begin{enumerate}[(1)]
\item\label{rem:equi:1}
When every point is $Q$-equicontinuous (where $Q=$ ordinary or mean or Banach-mean or diam-mean), by compactness it is easy to check that the $\delta$ is independent of the choice of $x$.

\item\label{rem:equi:2}
With the usual triangle inequality argument, we can make clear the difference between mean equicontinuity and diam-mean equicontinuity. I.e.,

$\cdot $ $x\in X$ is a mean equicontinuous point if and only if for every $%
\varepsilon >0$ there exists $\delta >0$ such that $\sup_{y_{1},y_{2}\in
U}\lim \sup_{n\rightarrow \infty }\frac{1}{n}%
\sum_{i=0}^{n-1}d(T^{i}y_{1},T^{i}y_{2})<\varepsilon $, where $U=\left\{
z:d(z,x)<\delta \right\}$;

$\cdot $ $x\in X$ is a diam-mean equicontinuity point if and only if for every $%
\varepsilon >0$ there exists $\delta >0$ such that $\lim \sup_{n\rightarrow
\infty }\sup_{y_{1},y_{2}\in U}\frac{1}{n}%
\sum_{i=0}^{n-1}d(T^{i}y_{1},T^{i}y_{2})<\varepsilon $, where $U=\left\{
z:d(z,x)<\delta \right\}$.

\item\label{rem:equi:3}
According to Fomin \cite{Fom51}, a t.d.s. $(X,T)$ is \textit{mean-L-stable} if for every $\varepsilon >0$ there exists $\delta>0$ such that if $d(x,y)<\delta$ then $\overline{D}(\left\{
i\geq 0:d(T^{i}x,T^{i}y)\geq \varepsilon \right\}) <\varepsilon$. It was shown in \cite{LTY13} that a t.d.s.
is mean-L-stable if and only if it is mean equicontinuous. Actually the
proof of such equivalence comes from a general argument which can be used to show that every mean condition has an equivalent
density condition. To be precise, one only need to show the following easy observation: Let $M\ge 0,$ and $\left\{ a_{i}\right\} $ be a sequence of reals with $0\leq a_{i}\leq M.$ We have that

$\cdot $ If $\limsup_{n\to\infty} \tfrac{1}{n}\sum\limits_{i=0}^{n-1}a_{i}\leq \delta $
then $\overline{D}(\{ i:a_{i}\geq \sqrt{\delta }\}) \leq \sqrt{%
\delta }$;

$\cdot $ If $\overline{D}(\left\{ i\geq 0:a_{i}\geq \delta \right\}) \leq
\delta $ then $\limsup_{n\to\infty} \tfrac{1}{n}\sum\limits_{i=0}^{n-1}a_{i}\leq
(M+1)\delta$.

\item\label{rem:equi:4}
There exists almost equicontinuous transitive t.d.s. $(X,T)$ which are not mean equicontinuous \cite{LTY13}.
\item\label{rem:equi:5}
Let $(X,T)$ an almost equicontinuous t.d.s. which is not mean equicontinuous. By \cite[Theorem 1.3]{GW93} such system is not an $E$-system. Thus if $A$ is the closure of the union of the supports of all invariant measures then $A$ is a proper closed invariant subset of $X$. Consider the factor map  $\pi\colon X\to Y$ that maps $A$ to a point $p$. Observe that $A$ contains no transitive points and the restricted map $\pi^\prime=\pi|_{X\setminus A}\colon X\setminus A\to Y\setminus \{p\}$ is a homeomorphism. This means that $\pi$ is an almost one-to-one map. Besides, it is easy to see that $Y\times Y$ is uniquely ergodic (i.e., it has only one invariant measure) with the only invariant measure concentrated at the point $(p,p)$. By \cite[Corollary 3.7]{LTY13} the t.d.s. on $Y$ is mean equicontinuous. This shows that every almost equicontinuous transitive t.d.s. is an almost one-to-one extension of a mean equicontinuous one. For a comparison note that factors of almost equicontinuous systems may not even be almost mean equicontinuous (see, e.g., \cite[Remark 4.4]{LTY13}).
\end{enumerate}

\end{rem}

For transitive and minimal systems we can state dichotomies between an
equicontinuous side and a sensitive side. It is known that each transitive
t.d.s. is either $Q$-sensitive (where $Q=$ ordinary or mean or Banach-mean or diam-mean) or almost $Q$-equicontinuous, and that a minimal t.d.s. is either $Q$-sensitive or $Q$-equicontinuous, see \cite{AAB96, LTY13, GM14} respectively. In this paper we will present more characterizations on dichotomy. To make it easier to readers, we briefly sketch the proof of the dichotomy theorem:
If $(X,T)$ is transitive and $U\subset X$ is a nonempty open set, then either
\begin{enumerate}[(i)]
\item $U$ contains a $Q$-equicontinuous point  or
\item there are no $Q$-equicontinuous points in $U$.
\end{enumerate}
If (\text{ii}) holds, then there exists a transitive point $x \in U$ which is not a $Q$-equicontinuous point. It is easy to see that $Tx$ is not a $Q$-equicontinuous point. Using a triangle inequality argument (e.g. see Proposition 5.3 in \cite{LTY13}), one can show that $(X,T)$ is $Q$-sensitive.
If (\text{i}) holds, then all transitive points in $U$ are $Q$-equicontiuous points, and so are all the transitive points in $T^{-i}U$, $i=1,2,3,\dots$.
Since $\bigcup_{i=1}^{\infty}T^{-i}U$ contains all the transitive points, it means that every transitive point is a $Q$-equicontinuous point and then $(X,T)$ is
almost $Q$-equicontinuous.

\begin{rem}\label{rem:dich}
\begin{enumerate}[(1)]
\item\label{rem:dich:1}
The set of transitive points is equal to the set of equicontinuous points of a transitive almost equicontinuous t.d.s. (see, e.g., \cite{AAB96}). This implies that a transitive equicontinuous t.d.s. must be minimal. On the other hand there are transitive non-minimal mean equicontinuous t.d.s. \cite{LTY13}. In the constructed example the set of its non-transitive mean equicontinous points forms a dense subset.
The Remark  \ref{rem:equi}\eqref{rem:equi:4} above provides another example where some, but not all, of the non-transitive points are mean equicontinuous points (just note that $Y$ is mean equicontinuous and the restricted factor map $\pi|_{X\setminus \pi^{-1}(U)}\colon X\setminus \pi^{-1}(U)\to Y\setminus U$ is a uniform isomorphism for any open set $U$ which contains $p$).

\item\label{rem:dich:2}
There are some easy conditions that imply a t.d.s. is not mean equicontinuous. Since every transitive mean equicontinuous t.d.s. is uniquely ergodic (see, e.g., \cite{LTY13}) we obtain that the following families are not mean equicontinuous:

$\cdot $ transitive systems with more than one minimal subsystem, e.g., non-minimal M-system;

$\cdot $ non-minimal E-systems (since it has at least two invariant measure: the one with full support and the one concentrated on a minimal proper subset).
\end{enumerate}
\end{rem}

\medskip \textbf{$\bullet$ Symbolic dynamics} \medskip

Let $\Sigma _{2}^{+}=\{0,1\}^{{\mathbb{N}}}$ endowed with the Cantor product
topology (given by the discrete topology on $\{0,1\}$). A compatible metric
on $\Sigma _{2}^{+}$ is defined by $d(x,y)=0$ if $x=y$ otherwise $d(x,y)={1}/%
{i}$ with $i=\min \{j\in {\mathbb{N}}\colon x_{j}\neq y_{j}\}$. We have that
$\Sigma _{2}^{+}$ is compact, and the \textit{shift map} $\sigma \colon
\Sigma _{2}^{+}\rightarrow \Sigma _{2}^{+},$ defined by $\sigma
(x)_{n}=x_{n+1}$ for $n\in \mathbb{N},$ is continuous. We often refer $%
(\Sigma _{2}^{+},\sigma )$ as the \textit{full shift}, and any compact $%
\sigma $-invariant subsystem $X\subset \Sigma _{2}^{+}$, as a \emph{subshift}%
.

Fix $n\in \mathbb{N}$, we call $w\in \{0,1\}^{n}$ a \textit{word of length $%
n $} and write $|w|=n$. Let $\mathfrak{o}(w)=\#\{i\in \mathbb{N}\colon
w_{i}=1\}$ be the number of occurrences of symbol $1$ in $w$. For any two
words $u=u_{1}u_{2}\dots u_{n}$ and $v=v_{1}v_{2}\dots v_{m}$, we define the
\textit{concatenation} of $u,v$ by $uv=u_{1}u_{2}\dots u_{n}v_{1}v_{2}\dots
v_{m}$ or equivalently by $u\sqcup v$. By the same manner we define by $%
u^{m} $ the concatenation of $m$ copies of $u$ for some $m\in \mathbb{N}$,
and $u^{\infty }$ the infinite concatenation of $u$. Let $X$ be a subshift
of $\Sigma _{2}^{+}$ and $x=x_{1}x_{2}\dots \in X$, for any $t,j\in {\mathbb{%
Z}}_{+}$, we denote $x_{[t,t+j]}=x_{t}x_{t+1}\dots x_{t+j}$. We say that a
word $w=w_{1}w_{2}\dots w_{n}$ appears in $x$ at position $t$ if $%
x_{t+j-1}=w_{j}$ for $j=1,2,\dots ,n$ i.e. $x_{[t,t+n-1]}=w$. By ${\mathcal{L%
}}(X)$ we mean the \textit{language} of subshift $X$, which is the set
consisting of all words that can appear in some $x\in X$, and by $\mathcal{L}%
_{n}(X)$ the set of all words of length $n$ in ${\mathcal{L}}(X)$. For any
word $u\in \mathcal{L}_{n}(X)$ its \emph{cylinder set} is defined by $%
[u]=\{x\in X\colon x_{1}x_{2}\dots x_{n}=u\}$. Note that the cylinder sets $%
\{[u]\colon u\in {\mathcal{L}}(X)\}$ form a basis of the topology of $X$. A
subshift is transitive if and only if it contains a transitive point. %

\part{Main results}

\section{Proof of Theorem \protect\ref{thm:Diam-Mean-Sen not Mean-Sen}}

\label{sect:Proof-Of-Theorem 1.3}

In this section we will prove Theorem \ref{thm:Diam-Mean-Sen not Mean-Sen},
i.e., we construct a subshift which is cofinitely sensitive but not Banach
mean sensitive. The needed notations was summarized in Section \ref%
{sect:preliminaries}.

We are working with the non-common notation of using $A\sqcup B=AB$ to
denote the concatenation of two words and $\sqcup _{i=a}^{b}A^{i}$ the
concatenation of the words $\left\{ A^{i}\right\} _{i=a}^{b}$ . By
convention, if $a>b$ we assume $\sqcup _{i=a}^{b}A^{i}$ is an empty word.

To begin with, let $A_{1}=111$ and $B_{1}=000$. For $n\geq 2$ we recursively
write
\begin{equation*}
A_{n}=A_{n-1}0^{k_{n-1}}B_{n-1}0^{k_{n-1}}A_{n-1}
\end{equation*}%
and
\begin{align*}
B_{n}& =\sqcup _{i=1}^{|A_{n}|}A_{n-1}0^{i-1}10^{|A_{n}|-i}\sqcup
A_{n-1}0^{|A_{n}|} \\
& =A_{n-1}10^{|A_{n}|-1}\dots A_{n-1}0^{i-1}10^{|A_{n}|-i}\dots
A_{n-1}0^{|A_{n}|-1}1A_{n-1}0^{|A_{n}|}
\end{align*}%
where the sequence $\{k_{n}\}$ is required to satisfy that $k_{n}\geq
n(2|A_{n}|+|B_{n}|)$.

Let $x=\lim_{n\rightarrow \infty }A_{n}0^{\infty }$ and $X=\overline{\text{%
orb}(x,\sigma )}$. By construction $x$ is clearly a recurrent point and
consequently $X$ is a transitive t.d.s. Before proceeding, we describe
several properties of $X$ with the following lemmas.

\begin{lem}
\label{lem:Density-1} For any $m > n$ and $\omega\in\mathcal{L}%
_{t}(A_{m})\cup\mathcal{L}_{t}(B_{m})$ with $t
(=t_{n})=|A_{n}|+2k_{n}+|B_{n}|$, we have that
\begin{equation*}
\mathfrak{o}(\omega)=\#\{i\in\mathbb{N}\colon \omega_{i}=1\}
\le|A_{n}|+|B_{n}|.
\end{equation*}
\end{lem}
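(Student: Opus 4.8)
The plan is to reduce the statement to the single fixed point $x$, to prove one structural fact about how the symbols $1$ are spaced along $x$, and then to do a short count.

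First I would collect the elementary bookkeeping and make the reduction. From $A_{n+1}=A_n0^{k_n}B_n0^{k_n}A_n$ we get $|A_{n+1}|=2|A_n|+2k_n+|B_n|$, hence $t_n=|A_n|+2k_n+|B_n|=|A_{n+1}|-|A_n|$; in particular $t_n<|A_{n+1}|$, and since $k_{n+1}\ge(n+1)(2|A_{n+1}|+|B_{n+1}|)\ge 2|A_{n+1}|$ also $t_n<k_{n+1}$. From $B_n=\sqcup_{i=1}^{|A_n|}A_{n-1}0^{i-1}10^{|A_n|-i}\sqcup A_{n-1}0^{|A_n|}$ (for $n\ge 2$) one has $\mathfrak{o}(B_n)=(|A_n|+1)\mathfrak{o}(A_{n-1})+|A_n|$ and $|B_n|=(|A_n|+1)(|A_{n-1}|+|A_n|)$, whence $|B_n|-\mathfrak{o}(B_n)\ge|A_n|^2$; combining with $\mathfrak{o}(A_n)\le|A_n|$ and $|A_n|\ge 3$ gives the crude estimate $\mathfrak{o}(A_n)+\mathfrak{o}(B_n)+3\le|A_n|+|B_n|$ (the case $n=1$ being $3+0+3=6=3+3$). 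Finally, since $B_m$ is a factor of $A_{m+1}$ and $A_{m+1}$ is a prefix of $x$, every $\omega\in\mathcal{L}_{t_n}(A_m)\cup\mathcal{L}_{t_n}(B_m)$ is a factor of $x$, so it suffices to bound $\mathfrak{o}(\omega)$ for an arbitrary length-$t_n$ factor $\omega$ of $x$.

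The main step is a ``level-$(n+1)$ decomposition'' of $x$: unwinding the recursions, $x$ (and every $A_m,B_m$) is a concatenation of copies of $A_{n+1}$, copies of $B_{n+1}$, isolated symbols $1$, and runs of $0$'s, where the isolated $1$'s are precisely the symbols appearing in the gap words $0^{i-1}10^{|A_j|-i}$ occurring inside $B_j$ for $j\ge n+2$. I would then prove two spacing facts: (i) any two of the blocks $A_{n+1},B_{n+1}$ have at least $k_{n+1}$ symbols strictly between them — the minimum being realized inside $A_{n+2}=A_{n+1}0^{k_{n+1}}B_{n+1}0^{k_{n+1}}A_{n+1}$, all other gaps (inside $B_{n+2}$, or arising from higher levels) being strictly larger; and (ii) any two isolated $1$'s are at distance at least $|A_{n+1}|$ — between two consecutive gap-$1$'s of a $B_j$ sits a whole copy of $A_{j-1}$ and $|A_{j-1}|\ge|A_{n+1}|$, while isolated $1$'s in distinct gaps or distinct $B_j$'s are even farther apart. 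Because $k_{n+1}>|A_{n+1}|>t_n$, it follows that a length-$t_n$ window $\omega$ meets at most one block among the copies of $A_{n+1}$ and $B_{n+1}$ and contains at most one isolated $1$.

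The final step is the count, in three cases. If $\omega$ meets no copy of $A_{n+1}$ or $B_{n+1}$, then $\mathfrak{o}(\omega)\le1$. If $\omega$ meets a copy of $B_{n+1}$, note that $B_{n+1}$ occurs in $x$ only flanked by the two runs $0^{k_{n+1}}$ of $A_{n+2}$, so $\omega$ stays inside this copy together with (part of) the flanking zeros; writing $B_{n+1}=A_n\,G_1\,A_n\,G_2\cdots A_n\,G_{|A_{n+1}|}\,A_n\,0^{|A_{n+1}|}$ with $G_i=0^{i-1}10^{|A_{n+1}|-i}$ of length $|A_{n+1}|>t_n$, a subword of length $\le t_n$ meets at most one copy of $A_n$ and at most two of the $G_i$, so $\mathfrak{o}(\omega)\le\mathfrak{o}(A_n)+2$. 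If $\omega$ meets a copy of $A_{n+1}=A_n0^{k_n}B_n0^{k_n}A_n$, write $\omega=\omega'\omega''$ with $\omega'$ the part inside this copy and $\omega''$ the part outside; then $\mathfrak{o}(\omega'')\le1$, and $\mathfrak{o}(\omega')\le\mathfrak{o}(A_n)+\mathfrak{o}(B_n)$ by a short case check — the only nonobvious case being when $\omega'$ overlaps both copies of $A_n$, where $\omega'$ must contain all of $0^{k_n}B_n0^{k_n}$, so its two end pieces form a length-$a$ suffix and a length-$b$ prefix of $A_n$ with $a+b\le|A_n|$, thus occupying disjoint positions of $A_n$ and carrying together at most $\mathfrak{o}(A_n)$ ones. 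In every case $\mathfrak{o}(\omega)\le\mathfrak{o}(A_n)+\mathfrak{o}(B_n)+3\le|A_n|+|B_n|$. The hard part is step two — setting up the level-$(n+1)$ decomposition and proving the two spacing estimates — since it requires tracking how the recursions repeatedly nest $A_{n+1}$'s, $B_{n+1}$'s, gap words and zero-runs inside the higher $A_m,B_m$, and in particular checking the ``seam'' configurations at the ends of the $A_j$'s and $B_j$'s; steps one and three are routine.
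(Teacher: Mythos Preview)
Your argument is correct and in fact proves the slightly stronger statement that every length-$t_n$ factor of $x$ carries at most $|A_n|+|B_n|$ ones. The organization, however, differs from the paper's. There the proof is a direct induction on $m$: for the base case $m=n+1$ one checks by hand the few possible shapes of a length-$t_n$ subword of $A_{n+1}$ and of $B_{n+1}$; for the inductive step one observes that any length-$t_n$ subword of $A_m$ or $B_m$ sits inside one of a small list of pieces (a copy of $A_{m-1}$, a copy of $B_{m-1}$, a zero-run abutting one of these, or a gap word of $B_m$ together with a short prefix or suffix of $A_{n+1}$) and applies the inductive hypothesis. You instead set up once and for all a level-$(n+1)$ block decomposition of $x$, extract two spacing estimates, and then run a single short case analysis. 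Your approach is more structural and the final count is cleaner --- the decomposition and spacing facts would also streamline the proof of Lemma~\ref{lem:Density-2} --- but the ``hard part'' you flag, verifying the decomposition and the spacing at all the seams of the nested $A_j$'s and $B_j$'s, is precisely the inductive content the paper writes out explicitly; you have repackaged it rather than avoided it. The paper's version is shorter to write down and easier to audit line by line; yours gives a clearer global picture of where the symbols $1$ in $x$ actually live.
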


\begin{proof}
We will prove this using induction on $m$.

Assume that $m=n+1$. If $\omega \in \mathcal{L}_{t}(A_{n+1})$, we clearly
have $\mathfrak{o}(\omega )<|A_{n}|+|B_{n}|$. That is because $\omega $ can only be
one of the following forms:
\begin{equation*}
A_{n}0^{k_{n}}B_{n}0^{k_{n}}\ ,\ 0^{k_{n}}B_{n}0^{k_{n}}A_{n}
\end{equation*}%
and
\begin{equation*}
{(A_{n})}_{[|A_{n}|-l,|A_{n}|]}0^{k_{n}}B_{n}0^{k_{n}}{(A_{n})}%
_{[1,|A_{n}|-l-1]},0\leq l<|A_{n}|-1.
\end{equation*}


If $\omega \in \mathcal{L}_{t}(B_{n+1})$, then
\begin{align*}
B_{n+1}& =\sqcup _{i=1}^{|A_{n+1}|}A_{n}0^{i-1}10^{|A_{n+1}|-i}\sqcup
A_{n}0^{|A_{n+1}|} \\
& =A_{n}10^{|A_{n+1}|-1}\dots A_{n}0^{i-1}10^{|A_{n+1}|-i}\dots
A_{n}0^{|A_{n+1}|-1}1A_{n}0^{|A_{n+1}|}.
\end{align*}%
With this we obtain that $\mathfrak{o}(\omega )\leq
|A_{n}|+1<|A_{n}|+|B_{n}|.$

Assume that for any $n<l\leq m-1$ and $\omega \in \mathcal{L}_{t}(A_{l})\cup
\mathcal{L}_{t}(B_{l})$. We conclude that
\begin{equation*}
\mathfrak{o}(\omega )\leq |A_{n}|+|B_{n}|.
\end{equation*}%
If $\omega \in \mathcal{L}_{t}(A_{m}),$ we have that
\begin{align*}
\omega \in & \mathcal{L}_{t}(A_{m-1})\cup \mathcal{L}%
_{t}(A_{m-1}0^{k_{m-1}})\cup \mathcal{L}_{t}(0^{k_{m-1}}A_{m-1}) \\
& \cup \mathcal{L}_{t}(B_{m-1})\cup \mathcal{L}_{t}(B_{m-1}0^{k_{m-1}})\cup
\mathcal{L}_{t}(0^{k_{m-1}}B_{m-1}).
\end{align*}%
Thus we also conclude $\mathfrak{o}(\omega )\leq |A_{n}|+|B_{n}|$.

Finally, assume $\omega \in \mathcal{L}_{t}(B_{m}).$ We express
\begin{equation*}
B_{m}=\sqcup _{i=1}^{|A_{m}|}A_{m-1}0^{i-1}10^{|A_{m}|-i}\sqcup
A_{m-1}0^{|A_{m}|}.
\end{equation*}

Since each $A_{l}$ ($l>n$) starts and ends with $A_{n}$ we have that
\begin{align*}
\omega \in & \mathcal{L}_{t}(A_{m-1})\cup \bigcup_{i=1}^{|A_{m}|}\mathcal{L}%
_{t}(0^{i-1}10^{|A_{m}|-i})\cup \mathcal{L}_{t}(0^{|A_{m}|}) \\
& \cup \bigcup_{i=1}^{|A_{m}|}\bigcup_{p=1}^{t-1}\mathcal{L}%
_{t}(0^{i-1}10^{|A_{m}|-i}({A_{n+1}})_{[1,p]}) \\
& \cup \bigcup_{i=1}^{|A_{m}|}\bigcup_{q=0}^{t-2}\mathcal{L}_{t}(({A_{n+1}}%
){}_{[|A_{n+1}|-q,|A_{n+1}|]}0^{i-1}10^{|A_{m}|-i}) \\
& \cup \bigcup_{q=0}^{t-2}\mathcal{L}_{t}(({A_{n+1}}%
){}_{[|A_{n+1}|-q,|A_{n+1}|]}0^{|A_{m}|}).
\end{align*}%
Once again we conclude $\mathfrak{o}(\omega )\leq |A_{n}|+|B_{n}|$.
\end{proof}

Let $y\in X$. We define $E_{y}:=\{i\in {\mathbb{Z}}_{+}\colon y_{i+1}=1\}$.

\begin{lem}
\label{lem:Density-2} For any $y\in X$ we have that $BD^{\ast }(E_{y})=0$.
\end{lem}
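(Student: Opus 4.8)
The plan is to push the uniform ``few ones'' estimate of Lemma~\ref{lem:Density-1} from the building blocks $A_m,B_m$ up to arbitrary finite windows of arbitrary points of $X$, and then simply unwind the definition of $BD^{\ast}$. Two preliminary observations make the reduction work. First, since $A_n$ is a prefix of $A_{n+1}$ for every $n$, the point $x=\lim_n A_n0^{\infty}$ is just the infinite word whose prefix of length $|A_n|$ equals $A_n$; hence every finite word occurring in $x$ already occurs inside $A_m$ for all large $m$. Second, as $y\in X=\overline{\orb(x,\sigma)}$, every finite subword $y_{[M+1,N]}$ occurs in $x$, and therefore occurs in some $A_m$ with $m>n$ (we are free to take $m$ as large as we please, and for $m>n$ one has $|A_m|\ge|A_{n+1}|=|A_n|+t_n>t_n$, where $t_n=|A_n|+2k_n+|B_n|$).

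Now fix $\ep>0$ and choose $n$ with $\tfrac1{2n}<\ep$. The growth condition $k_n\ge n(2|A_n|+|B_n|)$ gives
\[
t_n\ \ge\ 2k_n\ \ge\ 2n\bigl(|A_n|+|B_n|\bigr),
\qquad\text{so}\qquad
\frac{|A_n|+|B_n|}{t_n}\ \le\ \frac1{2n}\ <\ \ep .
\]
Given any window $w=y_{[M+1,N]}$, put $L=N-M$ and pick $m>n$ with $w\in\mathcal L_L(A_m)$. Partition $w$ into at most $\lfloor L/t_n\rfloor+1$ consecutive blocks, all but possibly the last of length exactly $t_n$ and the last of length $<t_n$. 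Each length-$t_n$ block lies in $\mathcal L_{t_n}(A_m)$, and the short final block (a word of length $<t_n$ occurring in $A_m$, with $|A_m|\ge t_n$) is a subword of some word of $\mathcal L_{t_n}(A_m)$; so by Lemma~\ref{lem:Density-1} each of these blocks contains at most $|A_n|+|B_n|$ ones. Summing,
\[
\#\bigl(E_y\cap[M,N-1]\bigr)\ =\ \mathfrak o(w)\ \le\ \Bigl(\tfrac{L}{t_n}+1\Bigr)\bigl(|A_n|+|B_n|\bigr),
\]
whence $\dfrac{\#(E_y\cap[M,N-1])}{L}\le\dfrac{|A_n|+|B_n|}{t_n}+\dfrac{|A_n|+|B_n|}{L}\le\ep+\dfrac{|A_n|+|B_n|}{L}$. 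Taking the supremum over all windows with $N-M=L$ and then $\limsup_{L\to\infty}$ kills the last term, so $BD^{\ast}(E_y)\le\ep$; since $\ep>0$ was arbitrary, $BD^{\ast}(E_y)=0$.

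I do not expect a real obstacle here: the combinatorial heart is Lemma~\ref{lem:Density-1}, and what remains is bookkeeping. The two points that deserve a line of care are the reduction ``every finite subword of $y$ sits inside some $A_m$'' (which rests on the $A_n$ nesting as prefixes and on $y$ being a limit of shifts of $x$) and the insistence that the ones-count estimate be uniform over \emph{all} length-$L$ windows — this uniformity is exactly what the Banach density demands, and it is delivered by the block partition together with the fact that Lemma~\ref{lem:Density-1} is stated for \emph{every} window of length $t_n$ inside $A_m$ (and $B_m$).
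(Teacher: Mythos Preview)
Your proof is correct and follows essentially the same approach as the paper: both reduce to Lemma~\ref{lem:Density-1} by partitioning an arbitrary window into blocks of length $t_n$ and using that every finite subword of any $y\in X$ occurs in some $A_m$ with $m>n$. Your organization is arguably a bit cleaner---you fix $n$ via $\varepsilon$ and then let the window length $L\to\infty$, and you handle all $y\in X$ uniformly---whereas the paper splits into the cases $y\in\orb(x,\sigma)$ and $y\notin\orb(x,\sigma)$ and lets the block scale $t_m$ grow with the window length; but the underlying idea is the same.
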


\begin{proof}
We will prove this result in two cases.

Assume $y\in \orb(x,\sigma ).$ Then $y=\sigma ^{a}x$ is a transitive point
for some $a\in {\mathbb{Z}}_{+}$. Note that for each $%
n>t_{1}=|A_{1}|+2k_{1}+|B_{1}|$ there is a unique $m(=m(n))\in \mathbb{N}$
such that $t_{m}\leq n=r_{m}t_{m}+s_{m}<t_{m+1}$ with $%
t_{m}=|A_{m}|+2k_{m}+|B_{m}|$, $r_{m}\in \mathbb{N}$ and $0\leq s_{m}\leq
t_{m}-1$. Thus by Lemma \ref{lem:Density-1} we have that for any interval $%
[M,N-1]$ with length $n$
\begin{align*}
\frac{\#\{E_{y}\cap \lbrack M,N-1]\}}{n}& =\frac{\mathfrak{o}(y_{[M+1,M+n]})%
}{n} \\
& \leq \frac{\mathfrak{o}(y_{[M+1,M+(r_{m}+1)t_{m}]})}{r_{m}t_{m}} \\
& \leq \frac{(r_{m}+1)(|A_{m}|+|B_{m}|)}{r_{m}(|A_{m}|+2k_{m}+|B_{m}|)} \\
& \leq \frac{(r_{m}+1)(|A_{m}|+|B_{m}|)}{%
r_{m}[|A_{m}|+2m(2|A_{m}|+|B_{m}|)+|B_{m}|]}.
\end{align*}%
Considering this and the fact that $m\rightarrow \infty $ when $n\rightarrow
\infty $, we conclude
\begin{align*}
BD^{\ast }(E_{y})& =\limsup_{n\rightarrow \infty }\left\{ \sup_{N-M=n}\frac{%
\#\{E_{y}\cap \lbrack M,N-1]\}}{n}\right\} \\
& \leq \limsup_{m\rightarrow \infty }\frac{(r_{m}+1)(|A_{m}|+|B_{m}|)}{%
r_{m}[|A_{m}|+2m(2|A_{m}|+|B_{m}|)+|B_{m}|]}=0.
\end{align*}

Now assume $y\in X\setminus \orb(x,\sigma ).$ There exists a sequence $%
\{a_{i}\}_{i=1}^{\infty }\subset {\mathbb{Z}}_{+}$ such that $\sigma
^{a_{i}}x\rightarrow y$. Hence for each word $y_{[M+1,M+n]}$ with $M\in {%
\mathbb{Z}}_{+},n\in \mathbb{N}$, there is an $a_{b_{n}}\in \mathbb{N}$ such
that
\begin{equation*}
y_{[1,M+n]}=\sigma ^{a_{b_{n}}}x_{[1,M+n]}=x_{[a_{b_{n}}+1,a_{b_{n}}+M+n]}.
\end{equation*}%
Using Lemma \ref{lem:Density-1} (and a similar argument as before) we
conlude
\begin{align*}
BD^{\ast }(E_{y})& =\limsup_{n\rightarrow \infty }\left\{ \sup_{N-M=n}\frac{%
\mathfrak{o}(x_{[a_{b_{n}}+M+1,a_{b_{n}}+M+n]})}{n}\right\} \\
& \leq \limsup_{m\rightarrow \infty }\frac{(r_{m}+1)(|A_{m}|+|B_{m}|)}{%
r_{m}[|A_{m}|+2m(2|A_{m}|+|B_{m}|)+|B_{m}|]}=0.
\end{align*}
\end{proof}

Now we are ready to prove Theorem \ref{thm:Diam-Mean-Sen not Mean-Sen}.

\begin{proof}[proof of Theorem \protect\ref{thm:Diam-Mean-Sen not Mean-Sen}]

Let $X$ be the subshift defined above. First we will prove $X$ is cofinitely
sensitive. Let $U\subset X$ be a non-empty open set. Since $x\in X$ is a
transitive point, there exists $m\in \mathbb{N}$ such that $\sigma ^{m}x\in
U $. Furthermore there exists $s\in \mathbb{N}$ such that the cylinder $%
[x_{[m+1,m+s]}]\subset U$. By the construction we can pick an $i\in \mathbb{N%
}$ such that $x_{[m+1,m+s]}$ is a subword of $A_{i}$. Without loss of
generality, we can further assume the word $A_{i}$ ends with $x_{[m+1,m+s]}$
(choose large enough $s$ if necessary). Also, observe that each word $%
A_{i_{1}}$ ends with $A_{i_{2}}$ and appears in $B_{i_{3}}$ for any $%
i_{2}<i_{1}<i_{3}\in \mathbb{N}$. Thus it is not hard to see that the
sequences $z_{j}=x_{[m+1,m+s]}0^{j}10^{\infty }$ ($j=0,1,2,\ldots $) are
eventually the points of $X$, and furthermore $\{z_{j}\colon j\in {\mathbb{Z}%
}_{+}\}\subset \lbrack x_{[m+1,m+s]}]\subset U$. Consequently we have
\begin{equation*}
N_{T}(U,1/2)=\{n\in {\mathbb{Z}}_{+}\colon \diam(\sigma ^{n}U)>1/2\}\supset {%
\mathbb{Z}}_{+}\setminus \lbrack 0,m+s],
\end{equation*}%
and the cofinite sensitivity follows.

Let $\varepsilon >0$ and $\eta \in (0,\varepsilon )$. For $\eta $ there is a
$K\in \mathbb{N}$ such that $1/{(K+1)}\leq \eta <1/K$. For any pair $%
(y_{1},y_{2})\in X\times X$ we can use Lemma \ref{lem:Density-2} to obtain $%
BD^{\ast }(E_{y_{1}}\cup E_{y_{2}})=0$. This implies that if $F=\{i\in {%
\mathbb{Z}}_{+}\colon d(\sigma ^{i}y_{1},\sigma ^{i}y_{2})\geq \eta \}$ then
\begin{equation*}
BD^{\ast }(F)\leq \limsup_{N-M\rightarrow \infty }\frac{2K\#\{(E_{y_{1}}\cup
E_{y_{2}})\cap \lbrack M,N-1]\}}{N-M}=0.
\end{equation*}%
Hence similarly to Remark \ref{rem:equi}\eqref{rem:equi:3},
\begin{align*}
\limsup_{N-M\rightarrow \infty }& \frac{1}{N-M}\sum_{i=M}^{N-1}d(\sigma
^{i}y_{1},\sigma ^{i}y_{2}) \\
& \leq \limsup_{N-M\rightarrow \infty }\frac{1}{N-M}(\#\{F\cap \lbrack
M,N-1]\}\cdot \diam(X)+\eta \cdot (N-M)) \\
& \leq BD^{\ast }(F)\cdot \diam(X)+\eta <\varepsilon .
\end{align*}%
Since the choice of pair $(y_{1},y_{2})$ is arbitrary, we have that $X$ is
Banach-mean equicontinuous, thus not Banach-mean sensitive.
\end{proof}

\section{Proof of Theorem \protect\ref{thm:Dich-E-system}}

\label{sect:Proof-Of-Theorem 1.4}

In this section we show that positive topological entropy and Devaney chaos
does not imply mean sensitivity. As a corollary we show that mean
sensitivity and mean equicontinuity have no dichotomy for $E$-systems. It is
fair to mention that the following construction is inspired by the techniques
in \cite{LTY13} and \cite{LOYZ15}, nonetheless, the proof for this result
has several extra technical issues.

We start with an arbitrarily binary minimal subshift $(Y,\sigma )$. Pick a
point $y=y_{1}y_{2}\ldots \in Y$ and denote the word $C_{n}=y_{1}\ldots
y_{n} $ for each $n\in \mathbb{N}$. Set $A_{1}=101$, $B_{1}=C_{1}$. Now we
recursively define
\begin{equation*}
A_{n+1}=A_{n}0^{k_{n}}B_{n}0^{k_{n}}A_{n}
\end{equation*}%
and
\begin{align*}
B_{n+1}=& C_{n+1}\sqcup _{i=1}^{n}(A_{i}0^{|A_{i+1}|})^{n+1-i}\text{ \ \ } \\
=& C_{n+1}(A_{1}0^{|A_{2}|})^{n}(A_{2}0^{|A_{3}|})^{n-1}\ldots
(A_{n}0^{|A_{n+1}|})^{1},
\end{align*}%
where the sequence $\{k_{n}\}$ satisfies the following properties:

\begin{enumerate}
\item \label{cond:1} $\frac{k_{m}}{|A_{n}|+2k_{n}+|B_{n}|}>\frac{|B_{m}|}{%
|B_{n}|}$ for all $1\leq n< m$ and

\item \label{cond:2} $k_{n}\ge n(2|A_{n}|+|B_{n}|)$.
\end{enumerate}

Let $x=\lim_{n\rightarrow \infty }A_{n}0^{\infty }$ and $X=\overline{\orb%
(x,\sigma )}$. From the construction it is clear that $(Y,\sigma )$ is a
minimal subshift of $(X,\sigma )$, and the system $(X,\sigma )$ is
transitive and has dense periodic points (of the form $\sigma
^{t}(A_{n}0^{|A_{n+1}|})^{\infty })$. From this we obtain the following
result.

\begin{prop}
\label{prop:Devaney} $(X,\sigma)$ is a Devaney chaotic t.d.s.
\end{prop}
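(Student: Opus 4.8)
The plan is to verify the three defining properties of Devaney chaos for $(X,\sigma)$: that it is a subshift (compact $\sigma$-invariant), that it is transitive, and that its periodic points are dense. The first two are essentially already observed in the text: $X=\overline{\orb(x,\sigma)}$ is by definition a closed $\sigma$-invariant subset of $\Sigma_2^+$, and since $x=\lim_n A_n0^\infty$ is a recurrent point (each $A_n$ is a prefix of $A_{n+1}$, so the word $A_n$ reappears arbitrarily far along $x$, which forces $x$ to return to each of its cylinders), the orbit closure of a recurrent point is transitive. So the one genuinely new thing to establish is \emph{density of periodic points}.

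For density of periodic points, I would work with the candidate periodic orbits flagged in the text, namely the points $p_{n,t}=\sigma^t\big((A_n0^{|A_{n+1}|})^\infty\big)$ for $n\in\mathbb{N}$ and $t\in\mathbb{Z}_+$. First I must check these actually lie in $X$: because $B_{n+1}$ contains, by its very definition $B_{n+1}=C_{n+1}\sqcup_{i=1}^n (A_i0^{|A_{i+1}|})^{n+1-i}$, a long run of consecutive copies of the block $A_n0^{|A_{n+1}|}$ (the $i=n$ term gives one copy, but more usefully the blocks $(A_i0^{|A_{i+1}|})^{n+1-i}$ for large $n$ produce, inside $A_{m}$ for $m$ large, arbitrarily many consecutive copies of $A_n0^{|A_{n+1}|}$ once one tracks how $B_m$ sits inside $A_{m+1}\subset A_{m+2}\subset\cdots$ and hence inside $x$). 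Thus every finite subword of $(A_n0^{|A_{n+1}|})^\infty$ appears in $x$, so the whole periodic point $(A_n0^{|A_{n+1}|})^\infty$ is a limit of shifts of $x$, i.e.\ lies in $X$; and $X$ being $\sigma$-invariant and closed, all the $p_{n,t}$ lie in $X$. Next, density: given a nonempty open $U\subset X$ and a transitive point (e.g.\ $x$ itself or some $\sigma^s x$) landing in $U$, there is a cylinder $[w]\subset U$ with $w$ a subword of $x$, hence a subword of some $A_n$; enlarging $n$ if needed, I may assume $w$ is a \emph{suffix} of $A_n$, and then $\sigma^{|A_n|-|w|}\big((A_n0^{|A_{n+1}|})^\infty\big)$ begins with $w$, so it is a periodic point in $[w]\subset U$.

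The main obstacle I anticipate is the bookkeeping needed to confirm rigorously that $(A_n0^{|A_{n+1}|})^\infty$ is in $X$ — i.e.\ that arbitrarily long concatenations $(A_n0^{|A_{n+1}|})^k$ really do occur as subwords of $x$. This requires unwinding the recursion: one shows by induction that for each fixed $n$ and each $k$, the block $(A_n0^{|A_{n+1}|})^k$ appears in $B_m$ for all sufficiently large $m$ (reading off the $i=n$ factor $(A_n0^{|A_{n+1}|})^{m+1-n}$ in the definition of $B_{m}$), and that $B_m$ in turn appears in $A_{m+1}=A_m0^{k_m}B_m0^{k_m}A_m$, which appears in $x$ since $A_{m+1}$ is a prefix of $x$. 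Condition~\eqref{cond:2} on $\{k_n\}$ plays no role here; it is needed later for the entropy and almost-mean-equicontinuity claims, not for Devaney chaos, so I would not invoke it. Once the occurrence statement is in hand, the rest is the routine transitive-point-plus-cylinder argument sketched above, exactly parallel to the cofinite-sensitivity argument in the proof of Theorem~\ref{thm:Diam-Mean-Sen not Mean-Sen}.
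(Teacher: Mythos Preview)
Your proposal is correct and follows exactly the paper's approach: the paper gives no separate proof, simply observing in the paragraph before the proposition that $(X,\sigma)$ is transitive with dense periodic points of the form $\sigma^t\big((A_n0^{|A_{n+1}|})^\infty\big)$, and you have filled in precisely those details (membership in $X$ via the $(A_n0^{|A_{n+1}|})^{m+1-n}$ block inside $B_{m+1}$, and density via the cylinder argument). One small wording issue to tighten: in the density step, merely ``enlarging $n$'' does not turn an arbitrary subword $w$ into a suffix of $A_n$; what you need (and what the Theorem~\ref{thm:Diam-Mean-Sen not Mean-Sen} proof you cite actually does) is to \emph{extend} $w$ to the right along $x$ until it terminates at position $|A_n|$ for some large $n$, thereby shrinking the cylinder to one whose label is a genuine suffix of $A_n$.
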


We will later prove that the subshift $(X,\sigma )$ satisfies the following
property:

\begin{prop}
\label{prop:P-system} $(X,\sigma)$ is almost mean equicontinuous.
\end{prop}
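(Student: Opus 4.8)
The plan is to show that the transitive point $x$ itself is a mean equicontinuous point; by the dichotomy theorem for transitive systems quoted in the Preliminaries, this immediately upgrades to almost mean equicontinuity (indeed, once one transitive point is mean equicontinuous, all transitive points are, and the set of transitive points is residual). So fix $\varepsilon>0$; I must produce $\delta>0$ so that every $y\in X$ with $d(x,y)<\delta$ satisfies $\limsup_n \frac1n\sum_{i=0}^{n-1}d(\sigma^i x,\sigma^i y)<\varepsilon$. Choosing $\delta$ small forces $y$ to agree with $x$ on a long initial block, say $y_{[1,L]}=x_{[1,L]}$ with $L$ large; equivalently $y$ lies in a long cylinder around $x$. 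The point of the construction is that $x = \lim_n A_n0^\infty$ consists of the words $A_n$ (which are recursively built from $A_{n-1}$, blocks of zeros, and the "bad" blocks $B_{n-1}$) followed by an infinite tail of zeros, and $X$ is its orbit closure; the only way a point $y$ can look like $x$ on a very long prefix but then diverge is essentially to follow the $A_n$-scaffolding and differ only inside one of the rare $B$-blocks (or by an early transition to $0^\infty$ or to a periodic tail $(A_n0^{|A_{n+1}|})^\infty$, or by sitting inside the minimal subshift $Y$).

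The key quantitative input is the analogue of Lemma \ref{lem:Density-1}: because of condition \eqref{cond:2}, $k_n\ge n(2|A_n|+|B_n|)$, the density of "$B$-block content" and of symbols inherited from $Y$ inside any window of length $t_n=|A_n|+2k_n+|B_n|$ is $O(1/n)$, so it vanishes along the scales $t_n$, and condition \eqref{cond:1} guarantees this stays controlled across scales (it prevents a later, longer $B_m$-block from spoiling the average that was already small at scale $t_n$). Concretely I would first establish a counting lemma: for $m>n$ and any subword $\omega$ of $A_m$ or $B_m$ of length $t_n$, the number of coordinates at which $\omega$ "differs from the all-$A_n$-scaffolding-pattern" — more precisely the number of coordinates lying inside a copy of some $B_\ell$ or inside a $C_\ell$-prefix — is at most a fixed multiple of $|A_n|+|B_n|$. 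From this, exactly as in the proof of Lemma \ref{lem:Density-2}, one gets that for the transitive point $x$ (and then for every $y\in X$, by approximating $y$ by shifts of $x$ and passing to the limit) the set of coordinates where the "$B$/$Y$-content" occurs has upper (Banach, even) density $0$.

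Now the main estimate. Given $y$ with $y_{[1,L]}=x_{[1,L]}$, pick $n$ with $t_n\le L$ so that the first discrepancy between $x$ and $y$ (if any) occurs after coordinate $t_n$. For $i< $ (first discrepancy), $d(\sigma^i x,\sigma^i y)$ is tiny; for $i\ge$ that, I bound $d(\sigma^i x,\sigma^i y)$ crudely by $\diam(X)\le 1$ whenever $\sigma^i x$ and $\sigma^i y$ "disagree soon", i.e. within the first few coordinates, and by something $\le 1/K$ otherwise, where $1/K<\varepsilon$. The coordinates $i$ of the first type are contained in the union of: coordinates within distance $K$ of a place where the $A_n$-scaffolding of $x$ hits a $B$-block or a $C$-block, together with the single "branch point" where $y$ left the scaffolding and all coordinates after it that land near a $B$/$C$-block of whichever word $y$ is now following. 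By the density-zero lemma both contributions have Cesàro density $0$, so $\limsup_n\frac1n\sum_{i=0}^{n-1}d(\sigma^i x,\sigma^i y)\le 0\cdot\diam(X)+1/K<\varepsilon$, as required. I expect the main obstacle to be the bookkeeping in this last step: carefully enumerating the finitely many "shapes" a point $y\in X$ near $x$ can have — scaffolding-follower that diverges inside one $B_\ell$, point of a periodic orbit $(A_n0^{|A_{n+1}|})^\infty$, point with a $0^\infty$ tail, point of $Y$ — and checking that in every case the divergence set from $x$ has density zero, with condition \eqref{cond:1} doing the cross-scale work and \eqref{cond:2} doing the within-scale work. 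The pieces about $\sigma^i x$ alone (not referencing $y$) are handled uniformly by Lemma \ref{lem:Density-2}'s analogue; the only genuinely new ingredient is controlling the single branch where $y$ parts from $x$, and there one uses that after the branch $y$ is (a shift of) a point of $X$ again, to which the same density-zero bound applies.
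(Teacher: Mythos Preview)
Your overall plan --- show that the transitive point $x$ is itself mean equicontinuous and invoke the dichotomy --- is exactly the paper's strategy. The counting philosophy (conditions \eqref{cond:1} and \eqref{cond:2} force the ``bad'' content to have vanishing density at every scale) is also right. But your execution has a concrete gap, precisely in the bookkeeping step you flagged as the main obstacle.

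The problem is the quantity you propose to track. You want to count coordinates that fall inside a $B_\ell$-block or a $C_\ell$-prefix, treating the $A_n$-scaffolding as ``good''. But $A_1=101$, so every $A_n$ is riddled with $1$'s; after the branch point the copies of $A$-blocks inside $y$ are in general \emph{not aligned} with those inside $x$, and each misaligned $1$ in an $A$-block of $y$ produces a disagreement with $x$ that lies nowhere near any $B$/$C$-block. (Concretely: take $y=(A_m0^{|A_{m+1}|})^\infty\in[A_m]$. The second copy of $A_m$ in $y$ begins at a position where $x$ sits inside a long $0^{k_{m+1}}$ block; every $1$ in that copy of $A_m$ is a disagreement, and none of them is near a $B$/$C$-block of either $x$ or $y$.) So your claimed containment of the ``disagree soon'' set fails as written.

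The fix --- and this is what the paper does --- is to abandon the $B$/$C$ bookkeeping entirely and count \emph{all} occurrences of the symbol $1$ via $\mathfrak{o}(\cdot)$. The paper's Lemmas~\ref{lem:P-system-almostMeanEqui-1}--\ref{lem:Count-3} show that any subword of $x$ beginning with $A_n$ has $\mathfrak{o}$ bounded by roughly $(\text{length})\cdot\frac{|A_n|+|B_n|}{|A_n|+2k_n+|B_n|}$, which tends to $0$ with $n$. The second key simplification you are missing is to route the comparison through the fixed point $0^\infty$: for every $z\in[A_m]$ one shows $\limsup_n\frac1n\sum_{i=0}^{n-1}d(\sigma^i z,0^\infty)<\varepsilon/2$ directly from the $\mathfrak{o}$-bound (since $\sigma^i z$ starts with $0^K$ whenever there is no $1$ among $z_{i+1},\dots,z_{i+K}$), and then the triangle inequality gives $\limsup_n\frac1n\sum d(\sigma^i x,\sigma^i z)<\varepsilon$. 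This eliminates any need to classify the ``shapes'' of $y$ or to analyse branch points: the only structural fact used about $z$ is that its prefixes are subwords of $x$ beginning with $A_m$.
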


From the construction and Propositions ~\ref{prop:Devaney} and~\ref{prop:P-system}, we have that:

\begin{prop}
\label{prop:P-system-1} In the full shift $(\Sigma _{2}^{+},\sigma )$, every
minimal subshift $(Y,\sigma )$ is contained in a Devaney chaotic subshift $%
(X,\sigma )$ which is almost mean equicontinuous.
\end{prop}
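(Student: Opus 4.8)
The plan is to treat Proposition~\ref{prop:P-system-1} as a packaging statement: all the work has already been set up in the construction preceding it, so the proof reduces to checking that the construction is legitimate for an \emph{arbitrary} binary minimal subshift $(Y,\sigma)$, that the resulting subshift $X$ genuinely contains $Y$ as a subsystem, and then quoting Propositions~\ref{prop:Devaney} and~\ref{prop:P-system}. So I would fix an arbitrary $(Y,\sigma)\subseteq(\Sigma_2^+,\sigma)$, choose $y\in Y$, set $C_n=y_1\cdots y_n$, and run the recursion defining $A_n$, $B_n$, $x$ and $X$ exactly as above.

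First I would verify that a sequence $\{k_n\}$ satisfying conditions~\eqref{cond:1} and~\eqref{cond:2} exists, which is a routine greedy induction. Suppose $k_1,\dots,k_{n-1}$ have been chosen; then $A_1,\dots,A_n$ and $B_1,\dots,B_n$ are all determined (note $B_n=C_n\sqcup_{i=1}^{n-1}(A_i0^{|A_{i+1}|})^{n-i}$ uses only $A_1,\dots,A_n$). The instances of~\eqref{cond:1} with $m=n$, namely $k_n>\tfrac{|B_n|}{|B_j|}\bigl(|A_j|+2k_j+|B_j|\bigr)$ for $j=1,\dots,n-1$, together with~\eqref{cond:2} for index $n$, are then finitely many lower bounds on $k_n$ with already-determined right-hand sides, so any sufficiently large $k_n$ works; future constraints only bound later $k_m$'s from below, so there is no conflict. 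Next, since each $A_{n+1}$ begins with $A_n$, the sequence $A_n0^\infty$ converges to a well-defined $x\in\Sigma_2^+$ with $x_{[1,|A_n|]}=A_n$ for all $n$, and we put $X=\overline{\orb(x,\sigma)}$. To see $Y\subseteq X$: any word $w\in\mathcal{L}(Y)$ occurs in $y$, hence is a subword of some prefix $C_n$; by definition $C_n$ is a prefix of $B_{n+1}$, which occurs inside $A_{n+2}=A_{n+1}0^{k_{n+1}}B_{n+1}0^{k_{n+1}}A_{n+1}$, which is a prefix of $x$; thus $w\in\mathcal{L}(X)$. Hence $\mathcal{L}(Y)\subseteq\mathcal{L}(X)$, and since $Y$ is a subshift this gives $Y\subseteq X$; as $Y$ is closed and $\sigma$-invariant, $(Y,\sigma)$ is a subsystem of $(X,\sigma)$.

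Finally, Proposition~\ref{prop:Devaney} gives that $(X,\sigma)$ is Devaney chaotic and Proposition~\ref{prop:P-system} that it is almost mean equicontinuous, which is exactly the assertion of Proposition~\ref{prop:P-system-1}. I do not expect any serious obstacle in this proposition itself; it is genuinely a corollary. The real difficulty is deferred to the (later) proof of Proposition~\ref{prop:P-system}, where the growth conditions~\eqref{cond:1}--\eqref{cond:2} on the spacer blocks $0^{k_n}$ are what force the density of the symbol $1$ along every orbit to be small, so that, along the lines of Remark~\ref{rem:equi}\eqref{rem:equi:3}, the bulk of the points of $X$ turn out to be mean equicontinuous.
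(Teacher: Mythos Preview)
Your proof is correct and takes essentially the same approach as the paper, which simply states that Proposition~\ref{prop:P-system-1} follows ``from the construction and Propositions~\ref{prop:Devaney} and~\ref{prop:P-system}''; you have just filled in the details (existence of $\{k_n\}$, the inclusion $Y\subseteq X$) that the paper leaves as evident. One small caveat on your closing commentary (not the proof itself): the density of the symbol $1$ is \emph{not} small along every orbit in $X$---after all $Y\subseteq X$ and $Y$ is arbitrary---but only along portions of the transitive point $x$ that begin with some $A_n$, which is exactly what Lemma~\ref{lem:Count-3} records and what the proof of Proposition~\ref{prop:P-system} actually uses.
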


Note that since the system in Proposition \ref{prop:P-system-1} is transitive but not uniquely ergodic,
by Remark \ref{rem:dich}\eqref{rem:dich:2} it can not be mean equicontinuous.

\medskip
Now we can prove Theorem \ref{thm:Dich-E-system}.

\begin{proof}[Proof of Theorem \protect\ref{thm:Dich-E-system}]
Let $(Y,\sigma)$ be a minimal subshift with positive topological entropy. Using the fixed $Y$ we
construct a subshift $(X,\sigma)$ as in Proposition \ref{prop:P-system-1}. Then $(X,\sigma)$ is Devaney chaotic and almost mean equicontinuous, and so it is not
mean sensitive \cite{LTY13, GM14}. Besides,  $(X,\sigma)$ also has positive topological entropy because it contains $(Y,\sigma)$ as a subsystem. This implies that   
Theorem \ref{thm:Dich-E-system} holds, completing the proof.
\end{proof}

Consequently we have the proof of Corollary \ref{Dich-E-Cor}.

\begin{proof}[Proof of Corollary \protect\ref{Dich-E-Cor}]
Note that every Devaney chaotic t.d.s. is an $E$-system and mean
equicontinuous systems have zero entropy (\cite{LTY13, GM14}). Then
combining with Theorem \ref{thm:Dich-E-system} the corollary follows. One
can also yield the corollary from Proposition~\ref{prop:P-system-1}.
\end{proof}

So it remains to prove Proposition ~\ref{prop:P-system}. For this we need
more information about the structure of $(X,\sigma )$, which will be
obtained in the following series of lemmas. To help the readers to get a
good understanding of these lemmas we will describe the relations between
them: Lemma \ref{lem:Count-3} is exactly what we need in the proof of
Proposition ~\ref{prop:P-system}; Lemma \ref{lem:P-system-almostMeanEqui-1},
Lemma \ref{lem:P-system-almostMeanEqui-2} and Lemma \ref%
{lem:P-system-almostMeanEqui-3} are served for proving Lemma \ref%
{lem:Count-3}.

\medskip
Observe that each $A_{j}$ starts and ends with $A_{i}$ for all $%
1\leq i<j\in\mathbb{N}$. Let $m\geq n+1$. Using that $%
A_{n+1}=A_{n}0^{k_{n}}B_{n}0^{k_{n}}A_{n}$ one can check that there exists
an $M\in\mathbb{N}$ and a sequence $\left\{ m_{i}\right\} _{i=1}^{M}$ with $%
n\le m_{i}\leq m-1$ such that
\begin{align}
A_{m} & =\sqcup_{i=1}^{M}A_{n}0^{k_{m_{i}}}B_{m_{i}}0^{k_{m_{i}}}\sqcup A_{n}
\label{express-1} \\
& =A_{n}0^{k_{n}}B_{n}0^{k_{n}}\ A_{n}0^{k_{n+1}}B_{n+1}0^{k_{n+1}}\ldots
A_{n}0^{k_{n}}B_{n}0^{k_{n}}A_{n} .  \notag
\end{align}

\begin{lem}
\label{lem:Count-1}\label{lem:P-system-almostMeanEqui-1} For each $m\geq n$
and $1\leq s\leq|A_{n}|+2k_{m}+|B_{m}|$, we have that
\begin{equation*}
\mathfrak{o}((A_{n}0^{k_{m}}B_{m}0^{k_{m}}){}_{[1,s]})\leq\max\Bigl\{1,%
\frac {s}{|A_{n}|+2k_{n}+|B_{n}|}\Bigr\}\cdot(|A_{n}|+|B_{n}|).
\end{equation*}
\end{lem}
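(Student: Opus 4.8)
The plan is to prove the inequality directly by a short case analysis on $s$, using only the first growth condition \eqref{cond:1} on $\{k_n\}$ together with two elementary facts: $\mathfrak{o}(w)\le|w|$ for every word $w$, and $|B_1|<|B_2|<\cdots$ (a routine computation from the recursion $B_{n+1}=C_{n+1}\sqcup_{i=1}^{n}(A_i0^{|A_{i+1}|})^{n+1-i}$). I would set $t_n:=|A_n|+2k_n+|B_n|$, so that the claim reads $\mathfrak{o}\bigl((A_n0^{k_m}B_m0^{k_m})_{[1,s]}\bigr)\le\max\{1,s/t_n\}\cdot(|A_n|+|B_n|)$. The key preliminary observation is that for $n<m$, condition \eqref{cond:1} yields $k_m/t_n>|B_m|/|B_n|\ge1$, hence $k_m>t_n$ and in particular $|A_n|+k_m>t_n$; this is the only point at which the hypothesis on $\{k_n\}$ is used.

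Next I would dispose of the case $s\le t_n$, where the target reduces to $|A_n|+|B_n|$. If $m=n$ the word equals $A_n0^{k_n}B_n0^{k_n}$ and any prefix has at most $\mathfrak{o}(A_n)+\mathfrak{o}(B_n)\le|A_n|+|B_n|$ ones; if $m>n$ then $k_m>t_n\ge s$, so the length-$s$ prefix is a prefix of $A_n0^{k_m}$ and has at most $\mathfrak{o}(A_n)\le|A_n|$ ones. There remains the case $s>t_n$, which forces $m>n$ and makes $\max\{1,s/t_n\}=s/t_n>1$. If $s\le|A_n|+k_m$ the length-$s$ prefix still lies inside $A_n0^{k_m}$, so it has at most $|A_n|<(s/t_n)(|A_n|+|B_n|)$ ones. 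If $s>|A_n|+k_m$ then in particular $s>k_m$, and the prefix has at most $\mathfrak{o}(A_n)+\mathfrak{o}(B_m)\le|A_n|+|B_m|$ ones; since $|B_n|\le|B_m|$ one has $|A_n|+|B_m|\le\tfrac{|B_m|}{|B_n|}(|A_n|+|B_n|)$, and \eqref{cond:1} gives $\tfrac{|B_m|}{|B_n|}<\tfrac{k_m}{t_n}<\tfrac{s}{t_n}$, which closes the estimate.

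I do not expect a genuine obstacle here; the one thing to watch is the index bookkeeping, since the word being counted is built from $A_n$, $B_m$ and $0^{k_m}$ whereas the normalising denominator $t_n$ and the right-hand side $|A_n|+|B_n|$ carry the smaller index $n$. The estimate closes precisely because \eqref{cond:1} forces $k_m$ to dominate $t_n$ by a factor exceeding $|B_m|/|B_n|$, which is exactly enough to absorb the worst case, in which the length-$s$ prefix already contains all of $B_m$. Condition \eqref{cond:2} is not needed for this lemma.
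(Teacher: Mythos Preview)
Your proof is correct and follows essentially the same route as the paper: a case split on the position of $s$ relative to $|A_n|+k_m$, with condition~\eqref{cond:1} supplying the key bound $|B_m|<(k_m/t_n)|B_n|$ in the last case. Your final chain $|A_n|+|B_m|\le(|B_m|/|B_n|)(|A_n|+|B_n|)<(k_m/t_n)(|A_n|+|B_n|)<(s/t_n)(|A_n|+|B_n|)$ is a minor variant of the paper's $|A_n|+|B_m|\le|A_n|+|B_n|k_m/t_n\le((|A_n|+k_m)/t_n)(|A_n|+|B_n|)\le(s/t_n)(|A_n|+|B_n|)$, and your explicit remark that condition~\eqref{cond:2} is unused is accurate.
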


\begin{proof}
It is clearly true for $m=n$. Assume $m>n$. The result is easy to check for $%
s\in \lbrack 1,|A_{n}|+k_{m}]$. Now, since the sequence $\{k_{n}\}$
satisfies condition \eqref{cond:1} we have that $k_{m}>2k_{n}$ and $|B_{m}|<%
\frac{|B_{n}|\cdot k_{m}}{|A_{n}|+2k_{n}+|B_{n}|}$. This leads to
\begin{align*}
\mathfrak{o}(A_{n}0^{k_{m}}B_{m}0^{k_{m}}{}_{[1,s]})& \leq
|A_{n}|+|B_{m}|\leq |A_{n}|+\frac{|B_{n}|\cdot k_{m}}{|A_{n}|+2k_{n}+|B_{n}|}
\\
& \leq \frac{(|A_{n}|+k_{m})(|A_{n}|+|B_{n}|)}{|A_{n}|+2k_{n}+|B_{n}|} \\
& \leq \frac{s}{|A_{n}|+2k_{n}+|B_{n}|}(|A_{n}|+|B_{n}|)
\end{align*}%
for all $s>|A_{n}|+k_{m}$, completing the proof.
\end{proof}


\begin{lem}
\label{lem:P-system-almostMeanEqui-2} For each $m\geq n+1$, we have

\begin{enumerate}
\item \label{lem:P-system-almostMeanEqui-2:1} $\mathfrak{o}(A_{m} )\leq (%
\frac{|A_{m}|}{|A_{n}|+2k_{n}+|B_{n}|}+1)\cdot(|A_{n}|+|B_{n}|)$;

\item \label{lem:P-system-almostMeanEqui-2:2} $\mathfrak{o}%
(A_{m}0^{|A_{m+m^{\prime}}|} )=\mathfrak{o}(A_{m}) \leq(\frac{%
|A_{m}|+|A_{m+m^{\prime}}|}{|A_{n}|+2k_{n}+|B_{n}|})\cdot(|A_{n}|+|B_{n}|)$
for any $m^{\prime}\ge1$;

\item \label{lem:P-system-almostMeanEqui-2:3} $\mathfrak{o}%
(A_{n}0^{|A_{m}|}{}_{[1,s]} )\leq|A_{n}|+|B_{n}|$ for any $1 \le s
\le|A_{n}|+|A_{m}|$. 

\end{enumerate}
\end{lem}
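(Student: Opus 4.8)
The plan is to dispatch the three parts in the order \eqref{lem:P-system-almostMeanEqui-2:3}, \eqref{lem:P-system-almostMeanEqui-2:1}, \eqref{lem:P-system-almostMeanEqui-2:2}, since \eqref{lem:P-system-almostMeanEqui-2:1} is the only substantial one and \eqref{lem:P-system-almostMeanEqui-2:2} follows from it immediately. Part \eqref{lem:P-system-almostMeanEqui-2:3} should be essentially free: every occurrence of the symbol $1$ in the word $A_{n}0^{|A_{m}|}$ lies inside its first $|A_{n}|$ coordinates, so any length-$s$ prefix $(A_{n}0^{|A_{m}|})_{[1,s]}$ satisfies $\mathfrak{o}((A_{n}0^{|A_{m}|})_{[1,s]})\le\mathfrak{o}(A_{n})\le|A_{n}|\le|A_{n}|+|B_{n}|$.

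For part \eqref{lem:P-system-almostMeanEqui-2:1} I would start from the decomposition \eqref{express-1}, namely $A_{m}=\sqcup_{i=1}^{M}A_{n}0^{k_{m_{i}}}B_{m_{i}}0^{k_{m_{i}}}\sqcup A_{n}$ with $n\le m_{i}\le m-1$, so that, $\mathfrak{o}$ being additive over concatenations,
\begin{equation*}
\mathfrak{o}(A_{m})=\sum_{i=1}^{M}\mathfrak{o}\bigl(A_{n}0^{k_{m_{i}}}B_{m_{i}}0^{k_{m_{i}}}\bigr)+\mathfrak{o}(A_{n}).
\end{equation*}
To each summand apply Lemma \ref{lem:P-system-almostMeanEqui-1} with $s$ equal to the full block length $|A_{n}|+2k_{m_{i}}+|B_{m_{i}}|$. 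Since $m_{i}\ge n$, condition \eqref{cond:1} (in the form $k_{m}>2k_{n}$ used in the proof of that lemma, together with the monotonicity of $\{|B_{j}|\}$) gives $k_{m_{i}}\ge k_{n}$, hence $|A_{n}|+2k_{m_{i}}+|B_{m_{i}}|\ge|A_{n}|+2k_{n}+|B_{n}|$, so the maximum in Lemma \ref{lem:P-system-almostMeanEqui-1} is attained by the quotient and each block contributes at most $\frac{|A_{n}|+2k_{m_{i}}+|B_{m_{i}}|}{|A_{n}|+2k_{n}+|B_{n}|}(|A_{n}|+|B_{n}|)$. Summing, using $\sum_{i=1}^{M}(|A_{n}|+2k_{m_{i}}+|B_{m_{i}}|)=|A_{m}|-|A_{n}|$ from \eqref{express-1}, and bounding the trailing term crudely by $\mathfrak{o}(A_{n})\le|A_{n}|+|B_{n}|$ (and discarding the resulting negative term), gives exactly $\mathfrak{o}(A_{m})\le\bigl(\frac{|A_{m}|}{|A_{n}|+2k_{n}+|B_{n}|}+1\bigr)(|A_{n}|+|B_{n}|)$.

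For part \eqref{lem:P-system-almostMeanEqui-2:2}, the equality $\mathfrak{o}(A_{m}0^{|A_{m+m'}|})=\mathfrak{o}(A_{m})$ is immediate since appending zeros changes no occurrence of $1$. For the inequality I combine part \eqref{lem:P-system-almostMeanEqui-2:1} with the elementary length estimate: as $m+m'\ge n+1$ and $|A_{n+1}|=2|A_{n}|+2k_{n}+|B_{n}|$ with $\{|A_{j}|\}$ increasing, one has $|A_{m+m'}|\ge|A_{n}|+2k_{n}+|B_{n}|$, so the ``$+1$'' from part \eqref{lem:P-system-almostMeanEqui-2:1} is at most $\frac{|A_{m+m'}|}{|A_{n}|+2k_{n}+|B_{n}|}$ and gets absorbed. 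The only real obstacle is the bookkeeping in part \eqref{lem:P-system-almostMeanEqui-2:1}: one must check that the $\max$ in Lemma \ref{lem:P-system-almostMeanEqui-1} resolves to the fraction for \emph{every} block index $m_{i}$ (this is where $m_{i}\ge n$ and condition \eqref{cond:1} enter) and that the block lengths telescope cleanly to $|A_{m}|-|A_{n}|$; the rest is routine.
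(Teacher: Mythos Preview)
Your proposal is correct and follows essentially the same approach as the paper: both dispatch \eqref{lem:P-system-almostMeanEqui-2:3} trivially, reduce \eqref{lem:P-system-almostMeanEqui-2:2} to \eqref{lem:P-system-almostMeanEqui-2:1}, and prove \eqref{lem:P-system-almostMeanEqui-2:1} by applying Lemma~\ref{lem:P-system-almostMeanEqui-1} to each block of the decomposition \eqref{express-1} and summing. Your write-up is in fact slightly more explicit than the paper's in justifying why the $\max$ in Lemma~\ref{lem:P-system-almostMeanEqui-1} resolves to the quotient (via $k_{m_i}\ge k_n$ from condition~\eqref{cond:1}) and in spelling out the absorption of the ``$+1$'' in part~\eqref{lem:P-system-almostMeanEqui-2:2}.
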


\begin{proof}
\eqref{lem:P-system-almostMeanEqui-2:3} is trivial and obviously $%
\eqref{lem:P-system-almostMeanEqui-2:1}\Longrightarrow %
\eqref{lem:P-system-almostMeanEqui-2:2}$. So we only need to show %
\eqref{lem:P-system-almostMeanEqui-2:1}.

Let $M\in\mathbb{N}$ and $\left\{ m_{i}\right\} _{i=1}^{M}$ be such that $%
n\le m_{i}\leq m-1$ and
\begin{equation*}
A_{m}=\sqcup_{i=1}^{M}A_{n}0^{k_{m_{i}}}B_{m_{i}}0^{k_{m_{i}}}\sqcup A_{n}.
\end{equation*}
Observe that for each $r\geq n$,
\begin{equation*}
|A_{n}0^{k_{r}}B_{r}0^{k_{r}}|=|A_{n}|+2k_{r}+|B_{r}|%
\geq|A_{n}|+2k_{n}+|B_{n}|,
\end{equation*}
By Lemma \ref{lem:P-system-almostMeanEqui-1},
\begin{equation*}
\mathfrak{o}(A_{n}0^{k_{r}}B_{r}0^{k_{r}})\leq\frac{%
|A_{n}0^{k_{r}}B_{r}0^{k_{r}}|}{|A_{n}|+2k_{n}+|B_{n}|}%
\cdot(|A_{n}|+|B_{n}|),
\end{equation*}
so
\begin{align*}
\mathfrak{o}(A_{m}) & =\sum_{i=1}^M\mathfrak{o}%
(A_{n}0^{k_{m_i}}B_{m_i}0^{k_{m_i}})+\mathfrak{o}(A_{n}) \\
& \leq\sum_{i=1}^M\frac{|A_{n}0^{k_{m_i}}B_{m_i}0^{k_{m_i}}|}{%
|A_{n}|+2k_{n}+|B_{n}|}\cdot(|A_{n}|+|B_{n}|)+(|A_{n}|+|B_{n}|) \\
& \leq(\frac{|A_{m}|}{|A_{n}|+2k_{n}+|B_{n}|}+1)\cdot(|A_{n}|+|B_{n}|),
\end{align*}
completing the proof.
\end{proof}

Since $y$ is a minimal point, the word $0$ or $1$ appears in $y$
syndetically. This implies that there exists $N\in\mathbb{N}$ such that $%
A_{N}$ does not appear in $y$. We will use this $N$ in the following lemma.

\begin{lem}
\label{lem:P-system-almostMeanEqui-3} Let $n\geq N$, $m\geq n+1$ and $1\leq
t\leq|B_{m}|$ such that $(B_{m}){}_{[t,t+|A_{n}|-1]}=A_{n}$. Then for each $%
0\leq s\leq|B_{m}|+k_{m}-t$ we have
\begin{equation*}
\mathfrak{o}((B_{m}0^{k_{m}}){}_{[t,t+s]})\leq\Bigl(\frac{s+1}{%
|A_{n}|+2k_{n}+|B_{n}|}+1\Bigr)\cdot(|A_{n}|+|B_{n}|).
\end{equation*}
\end{lem}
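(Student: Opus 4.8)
The plan is to reduce the estimate to a block decomposition of the suffix of $B_m0^{k_m}$ starting at position $t$; throughout write $\ell:=|A_n|+2k_n+|B_n|$. First I would record two elementary facts. Since $n\ge N$, the word $A_N$ is a factor of $A_n$, and $C_m$ is a prefix of $y$, the choice of $N$ implies that $A_n$ cannot occur inside $C_m$; hence the given occurrence of $A_n$ meets the ``block part'' $E:=(A_10^{|A_2|})^{m-1}(A_20^{|A_3|})^{m-2}\cdots(A_{m-1}0^{|A_m|})$ of $B_m$. Secondly, $A_n$ is strictly longer than every $A_i$ with $i<n$, begins and ends with $1$, and carries a run of zeros of length at least $k_{n-1}$ flanked by $1$'s whose length---by conditions \eqref{cond:1} and \eqref{cond:2}---is strictly larger than every $k_j$ with $j<n-1$ and is not equal to any padding length $|A_{i+1}|$ with $i<n$. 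These will serve to locate occurrences of $A_n$.

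Next I would prove the key structural claim: any occurrence of $A_n$ in $B_m$ (for $n\ge N$, $m\ge n+1$) begins at the start of one of the copies of $A_n$ in the factorization $A_i=\sqcup_{j}\bigl(A_n0^{k_{r_j}}B_{r_j}0^{k_{r_j}}\bigr)\sqcup A_n$ (with $n\le r_j\le i-1$) of some block $A_i$, $n\le i\le m-1$, appearing in $E$---that is, the occurrence is \emph{aligned}. Granting this, the suffix $(B_m0^{k_m})_{[t,\,|B_m|+k_m]}$ is a concatenation $D_1D_2\cdots D_L$ in which each $D_l$ is either $A_n0^{k_r}B_r0^{k_r}$ with $n\le r\le m-1$, or $A_n0^{a}$ with $a\ge|A_{n+1}|$ (obtained by pairing a trailing $A_n$ of some $A_i$ with the zero block that follows it; the final such block absorbs the trailing $0^{k_m}$). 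The last block $D_L$ may be cut short because $B_m0^{k_m}$ ends, but a truncation of $D_L$ is still a prefix of $D_L$, so this is harmless. By Lemma~\ref{lem:P-system-almostMeanEqui-1} in the first case, and by Lemma~\ref{lem:P-system-almostMeanEqui-2}\eqref{lem:P-system-almostMeanEqui-2:3} together with $\ell\le|A_{n+1}|\le a$ (and, for the final block, just $\mathfrak{o}(A_n)\le|A_n|$) in the second, each $D_l$ satisfies $|D_l|\ge\ell$ and $\mathfrak{o}(P)\le\max\{1,\,|P|/\ell\}\cdot(|A_n|+|B_n|)$ for every prefix $P$ of $D_l$.

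Finally I would assemble the estimate. For $0\le s\le|B_m|+k_m-t$ the window $(B_m0^{k_m})_{[t,t+s]}$ has length $s+1$ and consists of $D_1,\dots,D_{j-1}$ in full together with a prefix $P$ of $D_j$, where $\sum_{l<j}|D_l|+|P|=s+1$. Summing the per-block bounds yields $\mathfrak{o}\bigl((B_m0^{k_m})_{[t,t+s]}\bigr)\le\bigl(\tfrac1\ell\sum_{l<j}|D_l|\bigr)(|A_n|+|B_n|)+\max\{1,|P|/\ell\}(|A_n|+|B_n|)$; whether the maximum equals $1$ (absorb it into the ``$+1$'') or $|P|/\ell$ (absorb it into the first summand), the right-hand side is at most $\bigl(\tfrac{s+1}{\ell}+1\bigr)(|A_n|+|B_n|)$, which is the claim.

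The hard part will be the structural claim that every occurrence of $A_n$ in $B_m$ is aligned. This is the only genuinely combinatorial step: one must rule out occurrences straddling the junction between $C_m$ and $E$, occurrences inside or straddling the short blocks $A_i0^{|A_{i+1}|}$ with $i<n$, and misaligned occurrences inside the long blocks $A_i0^{|A_{i+1}|}$ with $i\ge n$. The ``signature'' long zero-run of $A_n$ kills the misaligned occurrences (its length cannot be realised by a padding $0^{|A_{i+1}|}$, $i<n$, nor by an internal $0^{k_j}$, $j<n-1$), and the facts that $A_n$ begins and ends with $1$ and that $C_m$ contains no copy of $A_n$ dispose of the straddling cases; this is precisely where conditions \eqref{cond:1}--\eqref{cond:2} enter. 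I expect this to be run as an induction on $m$ in parallel with the estimate, just as in the proof of Lemma~\ref{lem:Density-1}.
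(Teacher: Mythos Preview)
Your overall strategy---decompose the suffix of $B_m0^{k_m}$ starting at $t$ into blocks of the form $A_n0^{k_r}B_r0^{k_r}$ or $A_n0^{a}$ and bound each via Lemmas~\ref{lem:P-system-almostMeanEqui-1} and~\ref{lem:P-system-almostMeanEqui-2}---matches the paper's. The gap lies in your structural ``alignment'' claim.

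As you state it, the claim says that every occurrence of $A_n$ in $B_m$ begins at one of the explicit $A_n$-positions in the factorization $A_i=\sqcup_j A_n0^{k_{r_j}}B_{r_j}0^{k_{r_j}}\sqcup A_n$ of some top-level block $A_i$ ($n\le i\le m-1$) of $E$. This is false: each $B_{r_j}$ with $r_j\ge n+1$ itself contains copies of $A_n$ in its block part, since $B_{r_j}=C_{r_j}\sqcup_{i=1}^{r_j-1}(A_i0^{|A_{i+1}|})^{r_j-i}$ includes $(A_n0^{|A_{n+1}|})^{r_j-n}$ whenever $r_j-1\ge n$. The paper makes exactly this observation (``This may not be the case, as $A_n$ appears also as subwords of some $B_{m_i}$'') and does \emph{not} attempt to prove alignment; instead it writes down an explicit decomposition $(B_m0^{k_m})_{[t,t+s]}=E_1\sqcup\cdots\sqcup E_9$ under the extra assumption that $t$ is aligned, and then handles the non-aligned case by recursively expanding the relevant $B_{r_j}$, noting that each expansion produces only words of the same nine types already bounded. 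Your clean decomposition $D_1\cdots D_L$ breaks down precisely when $t$ lands inside such a nested $B_{r_j}$: the suffix then begins with a tail of $B_{r_j}0^{k_{r_j}}$ rather than with a complete $D$-block. The induction on $m$ you allude to at the end is indeed the correct repair, but it should be understood as \emph{replacing} the alignment claim, not as a device for proving it.

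A secondary imprecision: your ``signature'' argument asserts that the internal run $0^{k_{n-1}}$ in $A_n=A_{n-1}0^{k_{n-1}}B_{n-1}0^{k_{n-1}}A_{n-1}$ is flanked by $1$'s. The left flank is indeed $1$ (since $A_{n-1}$ ends in $101$), but the right flank is the first symbol of $B_{n-1}=C_{n-1}\cdots$, namely $y_1$, which may be $0$. So the maximal zero-run at that position need not have length exactly $k_{n-1}$, and the uniqueness-of-signature argument for ruling out misaligned occurrences is more delicate than your sketch indicates.
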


\begin{proof}
By definition we have that
\begin{equation*}
B_{m}=C_{m}\sqcup_{i=1}^{m-1}(A_{i}0^{|A_{i+1}|})^{m-i}\text{ .\ }
\end{equation*}
Considering the choice of $N$, $A_{n}$ does not appear as a subword of $%
C_{m}\sqcup_{i=1}^{n-1}(A_{i}0^{|A_{i+1}|})^{m-i}$, nonetheless for each $%
n\leq i\leq m-1$, $A_{n}$ is contained in $A_{i}0^{|A_{i+1}|}$.

By hypothesis we have that $(B_{m}){}_{[t,t+|A_{n}|-1]}=A_{n}$. There exist $%
n\leq r\leq m-1$ and $0\le l<m-r$ such that $t$ locates in some $A_{r}$ of $%
B_{m}$, i.e.,
\begin{equation*}
\left\vert C_{m}\sqcup_{i=1}^{r-1}(A_{i}0^{|A_{i+1}|})^{m-i}\sqcup
(A_{r}0^{|A_{r+1}|})^{l}\right\vert <t\leq\left\vert
C_{m}\sqcup_{i=1}^{r-1}(A_{i}0^{|A_{i+1}|})^{m-i}%
\sqcup(A_{r}0^{|A_{r+1}|})^{l+1}\right\vert .
\end{equation*}
While there exists $M\in\mathbb{N}$ and a sequence $\left\{ m_{i}\right\}
_{i=1}^{M}$ with $n\le m_{i}\le r-1$ such that
\begin{equation*}
A_{r}=\sqcup_{i=1}^{M}A_{n}0^{k_{m_{i}}}B_{m_{i}}0^{k_{m_{i}}}\sqcup A_{n}.
\label{A_r-expression2}
\end{equation*}
We assume there exists $0\leq a\leq M$ such that
\begin{equation*}
t=\left\vert C_{m}\sqcup_{i=1}^{r-1}(A_{i}0^{|A_{i+1}|})^{m-i}\sqcup
(A_{r}0^{|A_{r+1}|})^{l}\right\vert +\left\vert
\sqcup_{i=1}^{a}A_{n}0^{k_{m_{i}}}B_{m_{i}}0^{k_{m_{i}}}\right\vert +1.
\end{equation*}
This may not be the case, as $A_{n}$ appears also as subwords of some $%
B_{m_{i}}.$ We will explain later what to do if this is not the case.

This implies that%
\begin{align}  \label{express-Bm}
(B_{m}0^{k_{m}}){}_{[t,t+s]}=& (\sqcup
_{i=a+1}^{M}A_{n}0^{k_{m_{i}}}B_{m_{i}}0^{k_{m_{i}}}\sqcup A_{n}0^{|A_{r+1}|}
\\
& \sqcup (A_{r}0^{|A_{r+1}|})^{m-r-l-1}\sqcup
_{i=r+1}^{m-1}(A_{i}0^{|A_{i+1}|})^{m-i}\sqcup 0^{k_{m}})_{[0,s]}.  \notag
\end{align}%
Depending on the location of $t+s$, $(B_{m}0^{k_{m}}){}_{[t,t+s]}$ can be
rewritten as the following concatenations:
\begin{equation*}
(B_{m}0^{k_{m}}){}_{[t,t+s]}=\sqcup _{j=1}^{9}E_{j},
\end{equation*}%
where for each $\ell =1,\ldots ,9$, $E_{\ell }$ is either empty, or
satisfies the following:

\begin{enumerate}
\item \label{possible-case:1} $E_{1}=%
\sqcup_{i=a+1}^{M}A_{n}0^{k_{m_{i}}}B_{m_{i}}0^{k_{m_{i}}}$;

\item \label{possible-case:2} $E_{2}=A_{n} 0^{|A_{r+1}|}$;

\item \label{possible-case:3} $E_{3}=(A_{r}0^{|A_{r+1}|})^{u}$ for some $%
0\le u\le m-r-l-1$;

\item \label{possible-case:4} $E_{4}=
\sqcup_{i=r+1}^{v-1}(A_{i}0^{|A_{i+1}|})^{m-i}$ for some $r+2\le v\le m-1$;

\item \label{possible-case:5} $E_{5}=(A_{v}0^{|A_{v+1}|})^{w}$ for some $%
0\le w\le m-v$;

\item \label{possible-case:6} $E_{6}=%
\sqcup_{i=1}^{p}A_{n}0^{k_{b_{i}}}B_{b_{i}}0^{k_{b_{i}}}$ for some $p\in%
\mathbb{N}$ and $\{b_{i}\}_{i=1}^{p}$ with $n\le b_{i}\le m-2$;

\item \label{possible-case:7} $%
E_{7}=A_{n}0^{k_{c}}B_{k_{c}}0^{k_{c}}{}_{[1,e_{1}]}$ for some $c \ge n$ and
$1\le e_{1}\le|A_{n}|+|B_{k_{c}}|+2k_{c}$;

\item \label{possible-case:8} $E_{8}=A_{n}0^{|A_{d}|}{}_{[1,e_{2}]}$ for
some $d \ge n$ and $1 \le e_{2} \le|A_{n}|+|A_{d}|$;

\item \label{possible-case:9} $E_{9}=A_{n}0^{|A_{m}|}0^{k_{m}}{}_{[1,e_{3}]}$
for some $1\le e_{3}\le|A_{m}|+|A_{n}|+k_{m}$.
\end{enumerate}

Now we make some comments about the decomposition for better understanding.
Observing the expression of \eqref{express-Bm}, if $t+s$ locates in some $%
A_{t}0^{|A_{t+1}|}\ (n\leq t\leq m-1)$ then $\sqcup _{i=1}^{5}E_{i}$ is the
left part determined by the chosen $A_{t}0^{|A_{t+1}|}$. Since $%
A_{t}0^{|A_{t+1}|}=\sqcup
_{i=1}^{q}A_{n}0^{k_{m_{i}}}B_{m_{i}}0^{k_{m_{i}}}\sqcup A_{n}0^{|A_{t+1}|}$
for some $q\in {\mathbb{Z}}_{+}$ and a sequence $\{m_{i}\}_{i=1}^{q}$ with $%
n\leq m_{i}\leq t-1$, $\sqcup _{i=6}^{9}E_{i}$ is the part from the
beginning of the chosen $A_{t}0^{|A_{t+1}|}$ to the location of $t+s$. $%
E_{7},E_{8},E_{9}$ are the possible cases that are left over from the
division and we have that one and only one of them is non-empty. The idea of
the decomposition is to expand $B_{m}0^{k_{m}}$ into the concatenations of
the forms $A_{n}0^{k_{m_{i}}}B_{m_{i}}0^{k_{m_{i}}}$ and $A_{n}0^{|A_{j}|}$.

Once $B_{m}0^{k_{m}}$ is broken as the union of $E_{i}$ as above we can make
use of Lemmas \ref{lem:P-system-almostMeanEqui-1} and \ref%
{lem:P-system-almostMeanEqui-2} to show the following.

\begin{enumerate}
\item $\mathfrak{o}(E_{1})=\sum \mathfrak{o}%
(A_{n}0^{k_{m_{i}}}B_{m_{i}}0^{k_{m_{i}}})\leq \frac{|E_{1}|}{%
|A_{n}|+2k_{n}+|B_{n}|}\cdot (|A_{n}|+|B_{n}|)$;

\item By Lemma \ref{lem:P-system-almostMeanEqui-2}%
\eqref{lem:P-system-almostMeanEqui-2:1}, $\mathfrak{o}(E_{2})=\mathfrak{o}%
(A_{n})\leq \frac{|E_{2}|}{|A_{n}|+2k_{n}+|B_{n}|}\cdot (|A_{n}|+|B_{n}|)$;

\item By Lemma \ref{lem:P-system-almostMeanEqui-2}%
\eqref{lem:P-system-almostMeanEqui-2:2}, $\mathfrak{o}(E_{3})\leq \frac{%
|E_{3}|}{|A_{n}|+2k_{n}+|B_{n}|}\cdot (|A_{n}|+|B_{n}|)$;

\item By Lemma \ref{lem:P-system-almostMeanEqui-2}%
\eqref{lem:P-system-almostMeanEqui-2:2}, $\mathfrak{o}(E_{4})\leq \frac{%
|E_{4}|}{|A_{n}|+2k_{n}+|B_{n}|}\cdot (|A_{n}|+|B_{n}|)$;

\item By Lemma \ref{lem:P-system-almostMeanEqui-2}%
\eqref{lem:P-system-almostMeanEqui-2:2}, $\mathfrak{o}(E_{5})\leq \frac{%
|E_{5}|}{|A_{n}|+2k_{n}+|B_{n}|}\cdot (|A_{n}|+|B_{n}|)$;


\item $\mathfrak{o}(E_{6})=\sum \mathfrak{o}%
(A_{n}0^{k_{b_{i}}}B_{b_{i}}0^{k_{b_{i}}})\leq \frac{|E_{6}|}{%
|A_{n}|+2k_{n}+|B_{n}|}\cdot (|A_{n}|+|B_{n}|)$;

\item By Lemma \ref{lem:P-system-almostMeanEqui-1}, $\mathfrak{o}(E_{7})\leq
\max \Bigl\{1,\frac{|E_{7}|}{|A_{n}|+2k_{n}+|B_{n}|}\Bigr\}\cdot
(|A_{n}|+|B_{n}|)$;

\item $\mathfrak{o}(E_{8})\leq \mathfrak{o}(A_{n})\leq |A_{n}|+|B_{n}|$;

\item $\mathfrak{o}(E_{9})\leq \mathfrak{o}(A_{n})\leq |A_{n}|+|B_{n}|$.
\end{enumerate}

Hence (if $E_{i}= \emptyset $ then $\mathfrak{o}(E_{i})=0$ otherwise using
the above inequalities)
\begin{align*}
\mathfrak{o}((B_{m}0^{k_{m}}){}_{[t,t+s]})= & \sum_{i=1}^{6}\mathfrak{o}%
(E_{i})+\max\{\mathfrak{o}(E_{7}),\mathfrak{o}(E_{8}),\mathfrak{o}(E_{9})\}
\\
\leq & \left( \frac{s+1}{|A_{n}|+2k_{n}+|B_{n}|}+1\right) \cdot
(|A_{n}|+|B_{n}|).
\end{align*}

If the extra assumption of $t$ does not hold we can continue the (eventually
finite) process of expanding $B_{m_{i}}$; this process will only provide
words of the same type as considered before.

This finishes the proof.
\end{proof}

\begin{lem}
\label{lem:Count-3} If $j-i>|A_{n}|,\ n\ge N$ and $x_{[i,i+|A_{n}|-1]}=A_{n}$%
, then
\begin{equation*}
\mathfrak{o}(x_{[i,j-1]})\leq\left( \frac{j-i}{|A_{n}|+2k_{n}+|B_{n}|}%
+2\right) (|A_{n}|+|B_{n}|).
\end{equation*}
\end{lem}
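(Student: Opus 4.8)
The plan is to expand the point $x$ at level $n$ into blocks to which Lemmas~\ref{lem:P-system-almostMeanEqui-1} and~\ref{lem:P-system-almostMeanEqui-3} apply, and then add up the contributions of the blocks met by $x_{[i,j-1]}$. Since $x$ begins with $A_m$ for every $m$ and $A_{m+1}=A_m0^{k_m}B_m0^{k_m}A_m$, formula~\eqref{express-1} shows that for our fixed $n$ the point $x$ is an infinite concatenation
\[
x=\sqcup_{l\ge 1}W_l,\qquad W_l:=A_n0^{k_{n_l}}B_{n_l}0^{k_{n_l}},
\]
with $n_l\ge n$ for all $l$, so that $|W_l|=|A_n|+2k_{n_l}+|B_{n_l}|\ge D:=|A_n|+2k_n+|B_n|$. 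By hypothesis $A_n$ occurs at the coordinate $i$, and this forces $i$ into one of two positions: since $A_n$ begins and ends with $1$ while $k_{n_l}>|A_n|$ by condition~\eqref{cond:2}, the occurrence cannot start inside either run $0^{k_{n_l}}$ of a block and cannot start strictly inside the leading copy of $A_n$ of a block (else $A_n$ would end with at least one $0$), so $i$ is either the first coordinate of some block $W_l$, or $i$ falls inside the factor $B_{n_l}$ of some $W_l$. In the latter case the whole occurrence stays inside that $B_{n_l}$ (once more because $A_n$ ends with $1$ and $k_{n_l}>|A_n|$), and, arguing as in the proof of Lemma~\ref{lem:P-system-almostMeanEqui-3} (where the hypothesis $n\ge N$ keeps $A_n$ out of every $C_m$ and out of $B_n$), one gets $n_l\ge n+1$, so Lemma~\ref{lem:P-system-almostMeanEqui-3} is available.

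If $i$ is a block boundary, then $x_{[i,j-1]}=W_l\cdots W_{l+p-1}(W_{l+p})_{[1,s]}$ for suitable $p\ge 0$ and $0\le s\le|W_{l+p}|$; applying Lemma~\ref{lem:P-system-almostMeanEqui-1} to each complete block (there the maximum is realised by the fraction, since $|W_{l+q}|\ge D$) and to the terminal prefix, and summing, gives $\mathfrak{o}(x_{[i,j-1]})\le\bigl(\tfrac{j-i}{D}+1\bigr)(|A_n|+|B_n|)$, which is stronger than needed. If $i$ lies inside $B_{n_l}$, I would split $x_{[i,j-1]}$ into the part lying in $W_l$ — this has the form $(B_{n_l}0^{k_{n_l}})_{[t,t+s']}$ with $t$ the offset of $i$ in $B_{n_l}$, hence is bounded via Lemma~\ref{lem:P-system-almostMeanEqui-3} with error term $+1$ — and the remaining part, which starts at the boundary of $W_{l+1}$ and is again a run of complete blocks followed by a prefix, bounded by Lemma~\ref{lem:P-system-almostMeanEqui-1} with a second error term $+1$. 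Since the two lengths add up to $j-i$, summing the two estimates yields precisely $\mathfrak{o}(x_{[i,j-1]})\le\bigl(\tfrac{j-i}{D}+2\bigr)(|A_n|+|B_n|)$, as claimed.

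Combining the per-block estimates is routine arithmetic; the real work is the combinatorial claim localising the occurrence of $A_n$ at $i$ — that it sits at a block boundary or strictly inside some $B_{n_l}$ with $n_l\ge n+1$, and in particular never inside $B_n$ once $n\ge N$. This is exactly the bookkeeping already performed, via the nine-piece decomposition of $B_m0^{k_m}$, in the proof of Lemma~\ref{lem:P-system-almostMeanEqui-3}, so I would quote that analysis rather than repeat it.
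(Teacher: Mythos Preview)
Your proof is correct and follows essentially the same route as the paper's: decompose $x$ at level $n$ into blocks $A_n0^{k_{m}}B_{m}0^{k_{m}}$, argue that the occurrence of $A_n$ at position $i$ must sit either at a block boundary or inside some $B_m$ with $m\ge n+1$, and then handle the two cases with Lemma~\ref{lem:P-system-almostMeanEqui-1} alone (giving the $+1$) or with Lemma~\ref{lem:P-system-almostMeanEqui-3} followed by Lemma~\ref{lem:P-system-almostMeanEqui-1} (giving the $+2$). Your justification for the localisation of $i$ (via $A_n$ beginning and ending with $1$ and $k_{n_l}>|A_n|$) is in fact more explicit than the paper's, which simply asserts the two cases.
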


\begin{proof}
Observe that $A_{n}$ ($n\geq N$) does not appear in $y$ and $%
x_{[i,i+|A_{n}|-1]}$ is equal to $A_{n}$. We have to consider different
cases depending of the locations of $i$ and $j$:

\begin{enumerate}
\item If there exists $p\in \mathbb{N}$ and a sequence $\left\{
m_{i}\right\} _{i=1}^{p}$ with $m_{i}\geq n$ such that
\begin{equation*}
x_{[i,j-1]}=\sqcup
_{i=1}^{p-1}A_{n}0^{k_{m_{i}}}B_{m_{i}}0^{k_{m_{i}}}\sqcup
(A_{n}0^{k_{m_{p}}}B_{m_{p}}0^{k_{m_{p}}}){}_{[1,s_{1}]}
\end{equation*}%
%
%
%
%
%
for some $1\leq s_{1}<|A_{n}|+2k_{m_{p}}+|B_{m_{p}}|$. By Lemma \ref%
{lem:P-system-almostMeanEqui-1} it is not hard to see that
\begin{equation*}
\mathfrak{o}(x_{[i,j-1]})\leq \left( \frac{j-i}{|A_{n}|+2k_{n}+|B_{n}|}%
+1\right) (|A_{n}|+|B_{n}|).
\end{equation*}%
Hence the inequality follows.

\item If
there exist $m\geq n+1$ and $1\leq t\leq |B_{m}|$ such that the initial
position of $x_{[i,j-1]}$ is exactly the $t$-th position of $B_{m}$ and $%
(B_{m})_{[t,t+|A_{n}|-1]}=A_{n}$. Then two cases are involved:

\begin{enumerate}
\item[(a)] if $|A_{n}|<j-i\leq |B_{m}|+k_{m}-t+1$ then $x_{[i,j-1]}$ can be
expressed as
\begin{equation*}
x_{[i,j-1]}=(B_{m}0^{k_{m}}){}_{[t,t+s_{2}]}
\end{equation*}%
for some $0\leq s_{2}\leq |B_{m}|+k_{m}-t$. By Lemma \ref%
{lem:P-system-almostMeanEqui-3} we conclude this case.

\item[(b)] if $j-i>|B_{m}|+k_{m}-t+1$ then we can write $x_{[i,j-1]}$ as
\begin{equation*}
x_{[i,j-1]}=(B_{m}0^{k_{m}}){}_{[t,|B_{m}|+k_{m}]}\sqcup
_{i=1}^{q-1}A_{n}0^{k_{m_{i}}}B_{m_{i}}0^{k_{m_{i}}}\sqcup
(A_{n}0^{k_{m_{q}}}B_{m_{q}}0^{k_{m_{q}}}){}_{[1,s_{3}]}
\end{equation*}%
where $q\in \mathbb{N}$, $\left\{ m_{i}\right\} _{i=1}^{q}$ is a sequence
with $m_{i}\geq n,$ and $1\leq s_{3}<|A_{n}|+2k_{m_{q}}+|B_{m_{q}}|$.
Applying Lemmas \ref{lem:P-system-almostMeanEqui-1} ,~\ref{lem:P-system-almostMeanEqui-3} we finish this case and so the whole proof
is completed.
\end{enumerate}
\end{enumerate}
\end{proof}

Now we are going to prove Proposition \ref{prop:P-system}.

\begin{proof}[Proof of Proposition \protect\ref{prop:P-system}]

To show $X$ is almost mean equicontinuous we will show that the transitive
point $x$ is mean equicontinuous.

Let $\varepsilon >0$. There exists $K\in \mathbb{N}$ such that for any $%
a,b\in X$ we have that if $a_{[1,K]}=b_{[1,K]}$ then $d(a,b)<\varepsilon /5$%
. Since $k_{n}$ satisfies \eqref{cond:2}, i.e. $k_{n}\geq
n(2|A_{n}|+|B_{n}|) $, we conclude
\begin{equation*}
\lim_{n\rightarrow \infty }\frac{|A_{n}|+|B_{n}|}{|A_{n}|+|B_{n}|+2k_{n}}=0.
\end{equation*}%
Considering this there exists $m\in \mathbb{N}$ large enough such that
\begin{equation*}
\frac{2K(|A_{m}|+|B_{m}|)}{|A_{m}|+|B_{m}|+2k_{m}}<\frac{\varepsilon }{4}.
\end{equation*}

Let $z\in \lbrack A_{m}]$. If there exists a $k\in \mathbb{Z}_{+}$ such that
$\sigma ^{k}z=0^{\infty }$then
\begin{equation*}
\limsup_{n\rightarrow \infty }\frac{1}{n}\sum_{i=0}^{n-1}d(\sigma
^{i}z,0^{\infty })=0.
\end{equation*}

Otherwise $\sigma ^{k}z\neq 0^{\infty }$ for all $k\in \mathbb{Z}_{+}$. For
any $i\in \mathbb{N}$, there is an $l_{i}\in \mathbb{N}$ such that $%
z_{[1,1+i]}=x_{[l_{i},l_{i}+i]}$. Since $z$ starts with $A_{m}$, then by
Lemma~\ref{lem:Count-3} and the choice of $K$, we have that
\begin{align*}
\limsup_{n\rightarrow \infty }\frac{1}{n}\sum_{i=0}^{n-1}d(\sigma
^{i}z,0^{\infty })\leq & \frac{\varepsilon }{5}\limsup_{n\rightarrow \infty }%
\frac{1}{n}\#\{0\leq i\leq n-1\colon (\sigma ^{i}z)_{[1,K]}=0^{K}\} \\
& +\limsup_{n\rightarrow \infty }\frac{1}{n}\#\{0\leq i\leq n-1\colon
(\sigma ^{i}z)_{[1,K]}\neq 0^{K}\} \\
<& \frac{\varepsilon }{4}+\limsup_{n\rightarrow \infty }\frac{1}{n}\cdot
2K\cdot \mathfrak{o}(z_{[1,n]}) \\
<& \frac{\varepsilon }{4}+\limsup_{n\rightarrow \infty }\frac{1}{n}\cdot
2K\cdot \lbrack \frac{n}{|A_{m}|+2k_{m}+|B_{m}|}(|A_{m}|+|B_{m}|)] \\
& +\limsup_{n\rightarrow \infty }\frac{1}{n}\cdot 4K\cdot (|A_{m}|+|B_{m}|)
\\
<& \frac{\varepsilon }{4}+\frac{\varepsilon }{4}=\frac{\varepsilon }{2}.
\end{align*}%
This implies that
\begin{equation*}
\limsup_{n\rightarrow \infty }\frac{1}{n}\sum_{i=0}^{n-1}d(\sigma
^{i}x,\sigma ^{i}z)<\varepsilon ,
\end{equation*}%
for any $z\in \lbrack A_{m}]$ and hence $x$ is a mean equicontinuous point, completing the whole proof.
\end{proof}

Inspired by Corollary \ref{Dich-E-Cor}, the next we naturally ask if every $%
E $ (or $M$, $P$)-system is either Banach-mean/diam-mean sensitive or
Banach-mean/diam-mean equicontinuous. 
We conjecture that this question is still not true for $E$-system in
general, and we leave it open here. In the following we will show that for $%
M $-systems the dichotomy theorem is always true for Banach-mean notions.

\begin{thm}\label{thm:Banach-mean-equi-M-system}
Each Banach-mean equicontinuous $M$-system $(X,T)$ is minimal.
\end{thm}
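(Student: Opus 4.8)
The plan is to deduce minimality from unique ergodicity. First I would record the elementary observation that Banach-mean equicontinuity implies mean equicontinuity: for any bounded sequence $\{a_i\}$ of non-negative reals one has $\limsup_{n\to\infty}\frac{1}{n}\sum_{i=0}^{n-1}a_i \le \limsup_{N-M\to\infty}\frac{1}{N-M}\sum_{i=M}^{N-1}a_i$, since the left-hand side is the $\limsup$ over the sub-collection of windows with $M=0$. Applying this with $a_i=d(T^ix,T^iy)$, every Banach-mean equicontinuous point is a mean equicontinuous point, so a Banach-mean equicontinuous $M$-system $(X,T)$ is in particular a transitive mean equicontinuous system. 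By the known fact recalled in Remark~\ref{rem:dich}\eqref{rem:dich:2}, $(X,T)$ is then uniquely ergodic; write $\mu$ for its unique invariant Borel probability measure.

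Next I would use unique ergodicity together with density of minimal points to force the whole space to be a single minimal set. Two minimal subsystems of $(X,T)$ are either equal or disjoint, and every nonempty closed invariant subsystem $Y$ carries an invariant Borel probability measure, since $M(Y,T|_Y)$ is non-empty; disjoint minimal subsystems therefore support mutually singular, hence distinct, invariant measures on $X$. As $(X,T)$ is uniquely ergodic, there can be only one minimal subsystem, call it $M_0$. Now let $p\in X$ be any minimal point; then $\overline{\orb(p,T)}$ is a minimal subsystem, so $\overline{\orb(p,T)}=M_0$, and in particular $p\in M_0$. Thus every minimal point of $(X,T)$ lies in $M_0$. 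Since $(X,T)$ is an $M$-system its minimal points are dense in $X$, and $M_0$ is closed, so $M_0=X$; that is, $(X,T)$ is minimal.

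There is no real obstacle once this reduction is set up, so the ``hard part'' amounts to nothing more than invoking the two ingredients correctly — that the Banach average dominates the Cesàro average, and that transitive mean equicontinuous systems are uniquely ergodic. For contrast, I considered a direct approach trying to show that a transitive point $x$ is almost periodic, by choosing a minimal point $p$ within the Banach-mean equicontinuity modulus $\delta$ of $x$ and transporting the syndetic return-time structure of $p$ across to $x$; this runs into a circularity, because $\delta$ is determined by the target tolerance $\varepsilon$ while the density of useful return times extracted near $p$ is controlled by the syndeticity gap of $p$, which depends on $p$ itself. Routing through unique ergodicity sidesteps this, and it also makes transparent that the statement is a mild sharpening of Remark~\ref{rem:dich}\eqref{rem:dich:2}, which already observes that a non-minimal $M$-system cannot be mean equicontinuous.
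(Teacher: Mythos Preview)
Your argument is correct and takes a genuinely different route from the paper. The paper argues by contrapositive, invoking \cite[Theorem~4]{TZ11}: in a non-minimal $M$-system there is a $\delta>0$ so that every open set contains a pair $(x,y)$ whose separation set $\{n:d(T^nx,T^ny)>\delta\}$ is thick; since thick sets have upper Banach density $1$, this yields Banach-mean sensitivity directly. Your approach instead passes through unique ergodicity: reduce Banach-mean equicontinuity to mean equicontinuity, cite that transitive mean equicontinuous systems are uniquely ergodic, and then observe that a uniquely ergodic system with dense minimal points must be a single minimal set.

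Each route has its own payoff. The paper's proof, as a byproduct, establishes the stronger dichotomic statement that every non-minimal $M$-system is Banach-mean \emph{sensitive}, which is exactly what feeds into Corollary~\ref{cor:Dich-M-system}. Your proof avoids the external input from \cite{TZ11} and is entirely self-contained modulo the unique-ergodicity fact already quoted in Remark~\ref{rem:dich}\eqref{rem:dich:2}; moreover, since you only use mean equicontinuity after the first step, your argument in fact shows the formally stronger statement that every mean equicontinuous $M$-system is minimal (which, as you note, is essentially the content of that remark made explicit).
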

\begin{proof}
If $X$ is a non-minimal $M$-system, by \cite[Theorem 4]{TZ11} there
exists a $\delta>0$, such that for any non-empty open subset $U\subset X$ there is a
pair $(x,y)\in U\times U$ satisfying the set $F=\{n\in{\mathbb{Z}}_{+}\colon
d(T^{n}x,T^{n}y)>\delta\}$ is a thick set. Note that the upper Banach
density of each thick set is exactly one, then similarly to Remark \ref{rem:equi}\eqref{rem:equi:3} we have
\begin{align*}
\limsup_{N-M\rightarrow\infty}\frac{1}{N-M}\sum_{i=M}^{N-1}d(T^{n}x,T^{n}y)
& >\limsup_{N-M\rightarrow\infty}\frac{1}{N-M}\sum_{i=M}^{N-1}(\delta
\#\{F\cap\lbrack M,N-1]\}) \\
& =\delta\cdot BD^{\ast}(F)=\delta.
\end{align*}
That is $(X,T)$ is Banach-mean sensitive, a contradiction.
\end{proof}

Combining \cite[Propsition 6.1(5)]{LTY13} and Theorem \ref{thm:Banach-mean-equi-M-system} we immediately have

\begin{cor}
\label{cor:Dich-M-system} Let $(X,T)$ be an $M$-system. Then it is either
Banach-mean sensitive or Banach-mean equicontinuous.
\end{cor}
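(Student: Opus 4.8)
The plan is to combine Theorem~\ref{thm:Banach-mean-equi-M-system} with a suitable dichotomy for Banach-mean notions on transitive systems, quoted from the literature as \cite[Proposition 6.1(5)]{LTY13}. That dichotomy says precisely: a transitive t.d.s. is either Banach-mean sensitive or almost Banach-mean equicontinuous. Since an $M$-system is in particular transitive, we immediately land in one of these two mutually exclusive cases. In the first case there is nothing left to prove. So I would concentrate entirely on upgrading ``almost Banach-mean equicontinuous'' to ``Banach-mean equicontinuous'' in the presence of the $M$-system hypothesis, and then invoke Theorem~\ref{thm:Banach-mean-equi-M-system}.

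Here is how I would carry out that upgrade. Suppose $(X,T)$ is an $M$-system that is almost Banach-mean equicontinuous, i.e.\ the set $E$ of Banach-mean equicontinuous points is residual. The set of minimal points is dense by the $M$-system hypothesis; moreover, by the dichotomy-style reasoning already sketched in the excerpt (the paragraph beginning ``If $(X,T)$ is transitive and $U\subset X$ is a nonempty open set''), the set $E$ is invariant and contains every transitive point. I would argue that $E$ must in fact be all of $X$: if $x_0 \notin E$ then, since minimal points are dense, there is a minimal point $z$ arbitrarily close to $x_0$; the orbit closure $\overline{\orb(z,T)}$ is a minimal subsystem, and one checks — using that being a Banach-mean equicontinuous point is preserved under $T$ and that a minimal system is either $Q$-equicontinuous everywhere or $Q$-sensitive (with $Q=$ Banach-mean), a fact cited in the excerpt from \cite{AAB96, LTY13, GM14} — that $\overline{\orb(z,T)}$ either lies entirely inside $E$ or entirely outside it. A density-of-transitive-points argument then forces $X=E$. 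Once $X=E$, $(X,T)$ is Banach-mean equicontinuous, so by Theorem~\ref{thm:Banach-mean-equi-M-system} it is minimal, and in particular it is not Banach-mean sensitive; but wait — that only re-derives equicontinuity, which is what we wanted. The cleaner route is: in the ``almost but not everywhere'' scenario we must still be able to conclude Banach-mean equicontinuity outright, because the contrapositive of Theorem~\ref{thm:Banach-mean-equi-M-system} (together with the $M$-system structure) rules out the genuinely ``almost'' case; so I would simply cite \cite[Proposition 6.1(5)]{LTY13} for the dichotomy ``Banach-mean sensitive or almost Banach-mean equicontinuous'' and then note that Theorem~\ref{thm:Banach-mean-equi-M-system} has eliminated the possibility of the system being almost-but-not-fully Banach-mean equicontinuous while non-minimal.

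The main obstacle I anticipate is bookkeeping rather than conceptual: one must be careful that the cited proposition really delivers the ``almost Banach-mean equicontinuous'' alternative (and not merely ``there is a Banach-mean equicontinuous point'', which is weaker), and one must verify that an $M$-system which is almost Banach-mean equicontinuous but not minimal cannot exist — this is exactly where Theorem~\ref{thm:Banach-mean-equi-M-system} is used, via the observation from Remark~\ref{rem:dich}\eqref{rem:dich:2} that a non-minimal $M$-system has more than one minimal subsystem and hence more than one invariant measure, so by the transitive-mean-equicontinuous-implies-uniquely-ergodic principle (also quoted from \cite{LTY13}) it cannot be Banach-mean equicontinuous. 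Assembling these pieces: if $(X,T)$ is not Banach-mean sensitive, the dichotomy gives almost Banach-mean equicontinuity; if it were moreover non-minimal we would contradict Theorem~\ref{thm:Banach-mean-equi-M-system} after promoting ``almost'' to ``full'' (which the residual-plus-dense-minimal-points argument achieves), so it is minimal, and a minimal almost Banach-mean equicontinuous system is Banach-mean equicontinuous. This closes the corollary.
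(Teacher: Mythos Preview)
Your argument has a genuine gap in the ``promotion'' step, and the overall route is more circuitous than needed.

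The flaw: your claim that the set $E$ of Banach-mean equicontinuous points of $X$ must be all of $X$ relies on the assertion that a minimal subsystem $\overline{\orb(z,T)}$ lies either entirely inside $E$ or entirely outside it. But $E$ records Banach-mean equicontinuity \emph{relative to $X$}: whether $z\in E$ depends on the behaviour of points of $X$ near $z$, not merely of points in $\overline{\orb(z,T)}$. The minimal dichotomy you cite applies to $\overline{\orb(z,T)}$ as a system in its own right, not to membership in $E$. So the ``residual-plus-dense-minimal-points'' argument does not go through, and without it you cannot apply Theorem~\ref{thm:Banach-mean-equi-M-system} (whose hypothesis is full, not almost, Banach-mean equicontinuity). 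Your appeal to Remark~\ref{rem:dich}\eqref{rem:dich:2} is also beside the point: that remark concerns \emph{mean} equicontinuity via unique ergodicity, not Banach-mean equicontinuity.

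The paper's proof is a one-line combination, and the key observation you are missing is that the \emph{proof} of Theorem~\ref{thm:Banach-mean-equi-M-system} establishes strictly more than its statement: it shows directly that every non-minimal $M$-system is Banach-mean \emph{sensitive} (not merely ``not Banach-mean equicontinuous''). Granting this, the corollary is immediate by a case split: if $(X,T)$ is non-minimal it is Banach-mean sensitive; if it is minimal, \cite[Proposition 6.1(5)]{LTY13} --- which is the \emph{minimal} dichotomy (a minimal system is either Banach-mean sensitive or Banach-mean equicontinuous), not the transitive one you quoted --- finishes the job. No promotion from ``almost'' to ``full'' is ever required.
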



\section{Conditions that imply mean sensitivity}

\label{sect:Positive-Cond}

In this section we are looking for sufficient conditions under which a
t.d.s. is mean sensitive. First, we focus on a special class of transitive
and ergodic systems. To begin with, we need some results and definitions
introduced in \cite{GM}.

Let $(X,T)$ be a t.d.s., $\mu $ be an ergodic measure on $X$ and $f\in
L^{2}(\mu )$. We say that $f$ is an \textit{almost periodic function} if $%
\overline{\{U_{T}^{j}f\colon j\in {\mathbb{Z}}_{+}\}}$ is a compact subset
of $L^{2}(\mu )$. We have that an ergodic system $(X,\mu ,T)$ is weakly
mixing if and only if every almost perioc function is constant. On the other
hand $(X,\mu ,T)$ has discrete spectrum if and only if every $f\in L^{2}(\mu
)$ is almost periodic (these two results are due to Halmos and Von-Neumann;
for detailed proofs see \cite{Wal82}).

Now assume $f$ is a continuous function on $X$. We say $(X,T)$ is \textit{$f$%
-mean sensitive} if there exists $\varepsilon >0$ such that for any
non-empty open subset $U\subset X$ we can find $x,y\in U$ satisfying
\begin{equation*}
d_{f}(x,y):=\lim_{n\rightarrow \infty }\frac{1}{n}\sum_{j=0}^{n-1}\left\vert
f(T^{j}x)-f(T^{j}y)\right\vert ^{2}d\mu >\varepsilon .
\end{equation*}%
When considering the opposite side, we say that $x\in X$ is an \textit{$f$%
-mean equicontinuous point} if for any $\varepsilon >0$ there is $\delta >0$
such that if $d(x,y)<\delta $ then $d_{f}(x,y)<\varepsilon $. $(X,T)$ is
said to be an \textit{almost $f$-mean equicontinuous system} if there is a
point which is both transitive and $f$-mean equicontinuous.
Similarly to the proof of \cite[Theorem 2.14]{GM} we have:

\begin{lem}
\label{lem:f-almost-mean-equi} Let $(X,T)$ be a t.d.s. Then $(X,T)$ is
almost mean equicontinuous if and only if it is almost $f$-mean
equicontinous for every continuous function $f$.
\end{lem}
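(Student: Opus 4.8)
The plan is to identify the set $E_{\mathrm{me}}$ of mean‑equicontinuous points of $(X,T)$ with $\bigcap_{f\in C(X)}E_f$, where $E_f$ denotes the set of $f$-mean‑equicontinuous points, and then to pass to a suitable countable subfamily of functions and invoke the Baire category theorem. We may assume $(X,T)$ is transitive, since this is built into the notion of almost $f$-mean equicontinuity. For a transitive system, running the dichotomy argument sketched in Section~\ref{sect:preliminaries} with $Q=f$-mean (whose only inputs are that $d_f^{1/2}$ obeys the triangle inequality, by Minkowski's inequality applied on finite time windows, and that $d_f(Tx,Ty)=d_f(x,y)$), one gets that $(X,T)$ is almost $f$-mean equicontinuous if and only if $E_f$ is residual, equivalently $E_f$ contains every transitive point; in the same way, ``almost mean equicontinuous'' is equivalent to $E_{\mathrm{me}}$ being residual.

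For the inclusion $E_{\mathrm{me}}\subseteq E_f$ (the forward implication), fix a mean‑equicontinuous point $x$ and a continuous $f$ with $\|f\|_\infty\le C$. Given $\varepsilon>0$, uniform continuity of $f$ produces $\rho>0$ with $d(u,v)<\rho\Rightarrow|f(u)-f(v)|^2<\varepsilon$. Splitting the time average of $|f(T^ix)-f(T^iy)|^2$ over $i<n$ according to whether $d(T^ix,T^iy)<\rho$ or not, and bounding the bad times by a Markov‑type estimate (cf.\ Remark~\ref{rem:equi}\eqref{rem:equi:3}), $\#\{i<n:d(T^ix,T^iy)\ge\rho\}\le\rho^{-1}\sum_{i<n}d(T^ix,T^iy)$, one obtains $d_f(x,y)\le\varepsilon+4C^2\rho^{-1}\limsup_n\frac1n\sum_{i<n}d(T^ix,T^iy)$; shrinking the neighbourhood of $x$ supplied by its mean equicontinuity makes the last term as small as we like, so $x\in E_f$.

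For the reverse inclusion we recover the metric. Fix continuous functions $f_k\colon X\to[0,1]$ such that $\widetilde d(u,v):=\sum_{k\ge1}2^{-k}|f_k(u)-f_k(v)|$ is a metric generating the topology of $X$. Since $d$ and $\widetilde d$ are uniformly equivalent on the compact space $X$, the same splitting argument (now converting small $\widetilde d$-time‑averages into small $d$-time‑averages and conversely) shows that a point is mean equicontinuous for $d$ if and only if it is mean equicontinuous for $\widetilde d$. The chain $\frac1n\sum_i|f_k(T^ix)-f_k(T^iy)|^2\le\frac1n\sum_i|f_k(T^ix)-f_k(T^iy)|\le\big(\frac1n\sum_i|f_k(T^ix)-f_k(T^iy)|^2\big)^{1/2}$ (the entries lying in $[0,1]$; Cauchy--Schwarz for the second bound) shows that $f_k$-mean equicontinuity of $x$ coincides with mean equicontinuity of $x$ for the pseudometric $|f_k(\cdot)-f_k(\cdot)|$. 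A tail estimate — choose $K$ with $\sum_{k>K}2^{-k}$ small, then take the minimum of the $\delta_k$ for $k\le K$ — gives that $x$ is $\widetilde d$-mean equicontinuous if and only if $x$ is $f_k$-mean equicontinuous for every $k$. Hence $E_{\mathrm{me}}=\bigcap_k E_{f_k}$, and together with the first inclusion $E_{\mathrm{me}}=\bigcap_{f\in C(X)}E_f=\bigcap_k E_{f_k}$. Consequently $E_{\mathrm{me}}$ is residual if and only if each $E_{f_k}$ is, and, using the translation of the first paragraph together with the residual set of transitive points, the Lemma follows.

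The main obstacle is the metric‑recovery step: one must verify that the Cesàro‑average (rather than pointwise) character of these conditions is compatible with passing between uniformly equivalent metrics, with the square in the definition of $d_f$, and with summing the pseudometrics $|f_k(\cdot)-f_k(\cdot)|$ over $k$ — each requiring a Markov, Cauchy--Schwarz, or tail estimate in place of a naive comparison. A minor secondary point is the $f$-mean analogue of the equicontinuity/sensitivity dichotomy invoked in the first paragraph, which is immediate once one observes that $d_f^{1/2}$ is a pseudometric.
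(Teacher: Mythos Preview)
Your argument is correct and is exactly the kind of computation one expects here; the paper itself gives no proof beyond the sentence ``Similarly to the proof of \cite[Theorem 2.14]{GM}'', so there is nothing in the text to compare your route against. One small caveat: your phrase ``we may assume $(X,T)$ is transitive'' slightly undersells the point---without transitivity the forward implication is actually false (an equicontinuous non-transitive system is almost mean equicontinuous but has no transitive point, hence is not almost $f$-mean equicontinuous for any $f$), so transitivity is a genuine implicit hypothesis of the lemma rather than a harmless reduction, and this is consistent with how the lemma is applied in Lemma~\ref{lem:non-almost-periodic-ergodic}.
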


\begin{lem}
\label{lem:non-almost-periodic-ergodic} Let $(X,T)$ be a transitive t.d.s., $%
\mu $ an ergodic measure on $X$ and $f\in L^{2}(\mu )$ a non-almost periodic
function. If for any non-empty open subset $A\subset X$ and any infinite
subset $S\subset {\mathbb{Z}}_{+}$, there are $s\neq t\in S$ and a generic
point $z\in X$ for $\mu $ (with respect to continuous functions) such that $%
z\in T^{-s}A\cap T^{-t}A$, then $(X,T)$ is mean sensitive.
\end{lem}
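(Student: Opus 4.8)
The plan is to reduce to a \emph{continuous} non-almost periodic function, then to show $(X,T)$ is $f$-mean sensitive, and finally to upgrade this to mean sensitivity.

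\emph{Reduction to continuous $f$.} First I would observe that the set $\mathcal{H}_{ap}\subset L^2(\mu)$ of almost periodic functions is a closed linear subspace: linearity is clear, and closedness follows from a routine $\varepsilon/3$-argument using that $U_T$ does not increase the $L^2(\mu)$-norm (a finite $\varepsilon/3$-net for the orbit of a nearby $f_k$ produces an $\varepsilon$-net for the orbit of $f$, hence its orbit closure is totally bounded, hence compact). Since $f$ is not almost periodic, $c:=\mathrm{dist}(f,\mathcal{H}_{ap})>0$, and since $C(X)$ is dense in $L^2(\mu)$ I may pick $g\in C(X)$ with $\|f-g\|_{L^2(\mu)}<c/2$; then $\mathrm{dist}(g,\mathcal{H}_{ap})\ge c/2>0$, so $g$ is continuous and non-almost periodic. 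Replacing $f$ by $g$, I assume from now on that $f\in C(X)$.

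\emph{Locating a good pair in each open set.} Because $\overline{\{U_T^j f:j\in\mathbb{Z}_+\}}$ is non-compact in the complete space $L^2(\mu)$, it is not totally bounded, so there are $\varepsilon_0>0$ and an infinite set $S\subset\mathbb{Z}_+$ with $\|U_T^s f-U_T^t f\|_{L^2(\mu)}\ge\varepsilon_0$ for all distinct $s,t\in S$. Now fix a non-empty open $A\subset X$. Feeding $A$ and $S$ into the hypothesis produces distinct $s,t\in S$ and a point $z$ generic for $\mu$ with respect to $C(X)$ such that $T^sz,T^tz\in A$. Put $x=T^sz$, $y=T^tz$, so $x,y\in A$. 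Since $h(w):=|f(T^sw)-f(T^tw)|^2$ is continuous and $h(T^jz)=|f(T^jx)-f(T^jy)|^2$, genericity of $z$ yields
\[
\lim_{n\to\infty}\frac1n\sum_{j=0}^{n-1}|f(T^jx)-f(T^jy)|^2=\int h\,d\mu=\|U_T^sf-U_T^tf\|_{L^2(\mu)}^2\ge\varepsilon_0^2 .
\]
Thus $d_f(x,y)\ge\varepsilon_0^2$; as $A$ was arbitrary, $(X,T)$ is $f$-mean sensitive.

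\emph{From $f$-mean sensitivity to mean sensitivity.} With $x,y$ as above, set $M=(2\|f\|_\infty)^2$ and $F=\{j\ge0:|f(T^jx)-f(T^jy)|^2\ge\varepsilon_0^2/2\}$. Averaging over $F$ and its complement (exactly as in Remark~\ref{rem:equi}\eqref{rem:equi:3}) forces $\overline D(F)\ge\varepsilon_0^2/(2M)>0$. By uniform continuity of $f$ on the compact space $X$ there is $\rho>0$ with $|f(u)-f(v)|\ge\varepsilon_0/\sqrt2\Rightarrow d(u,v)\ge\rho$, so $d(T^jx,T^jy)\ge\rho$ for $j\in F$ and
\[
\limsup_{n\to\infty}\frac1n\sum_{j=0}^{n-1}d(T^jx,T^jy)\ge\rho\,\overline D(F)\ge\frac{\rho\varepsilon_0^2}{2M}.
\]
Since $x,y$ lie in the arbitrary open set $A$ and $\delta:=\rho\varepsilon_0^2/(4M)$ depends only on $f$, this shows $(X,T)$ is mean sensitive. (Alternatively, once $f$-mean sensitivity is in hand one can invoke the transitive dichotomy: were $(X,T)$ not mean sensitive it would be almost mean equicontinuous, hence almost $f$-mean equicontinuous by Lemma~\ref{lem:f-almost-mean-equi}, contradicting $f$-mean sensitivity via a triangle-inequality argument at a transitive $f$-mean equicontinuous point.) The crux — and the only mildly delicate point — is realizing the two points of $A$ as iterates $T^sz,T^tz$ of a \emph{single} $\mu$-generic $z$, which is precisely what turns the Birkhoff average of $|f(T^jx)-f(T^jy)|^2$ into $\|U_T^sf-U_T^tf\|^2$; the failure of total boundedness of $\{U_T^jf\}$ then supplies a lower bound uniform over $A$.
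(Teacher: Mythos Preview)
Your proof is correct and follows the same core strategy as the paper: reduce to a continuous non-almost periodic $f$, use non-total-boundedness of $\{U_T^j f\}$ to extract an $\varepsilon_0$-separated infinite set $S$, apply the hypothesis to a given open $A$ to obtain a generic $z$ with $x=T^sz,\,y=T^tz\in A$, and evaluate the Birkhoff average of $h(w)=|f(T^sw)-f(T^tw)|^2$ along $z$ to conclude $d_f(x,y)\ge\varepsilon_0^2$.

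The only substantive difference is at the endpoints. For the reduction to continuous $f$, the paper simply cites that the non-almost periodic functions form an open set in $L^2(\mu)$ (\cite[Theorem~1.13]{GM}), whereas you give the equivalent direct argument that $\mathcal{H}_{ap}$ is closed. For the passage from $f$-mean sensitivity to mean sensitivity, the paper goes through a chain of citations: \cite[Theorem~2.13]{GM} (an $f$-mean sensitive system is not almost $f$-mean equicontinuous), then Lemma~\ref{lem:f-almost-mean-equi}, then the transitive dichotomy. Your direct uniform-continuity argument---bounding $\overline D(F)$ from below and using that $|f(u)-f(v)|\ge\varepsilon_0/\sqrt2$ forces $d(u,v)\ge\rho$---is more elementary and fully self-contained, since $\rho$, $\varepsilon_0$, and $M$ are fixed once $f$ is, independent of $A$. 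You also record the dichotomy route as an alternative, which is exactly the paper's path.
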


\begin{proof}
Continuous functions are dense in $L^{2}(X,\mu )$ and the set of non-almost
periodic functions is open (\cite[Theorem 1.13]{GM}), so there exists a
continuous function $f\in L^{2}(X,\mu )$ that is not almost periodic. This
implies it is not totally bounded, hence there exists $\varepsilon >0$ and
an infinite subset $S\subset \mathbb{Z}_{+}$ such that
\begin{equation*}
\int \left\vert U^{i}f-U^{j}f\right\vert ^{2}d\mu \geq \varepsilon \text{
for every }i\neq j\in S.
\end{equation*}%
For any non-empty open subset $A\subset X$ there exists $s\neq t\in S$ and a
generic point $z\in X$ for $\mu $ such that $z\in T^{-s}A\cap T^{-t}A$.
Define $g(x):=\left\vert U_{T}^{s}f(x)-U_{T}^{t}f(x)\right\vert ^{2}$. Since
$f$ is continuous, so is $g$. As $z$ is a generic point for $\mu $, then for
the continuous function $g$ we have
\begin{equation*}
\lim_{n\rightarrow \infty }\frac{1}{n}\sum_{j=0}^{n-1}g(T^{j}z)=\int g\ d\mu
\geq \varepsilon .
\end{equation*}%
Let $p:=T^{s}z\in A$ and $q:=T^{t}z\in A$. Then
\begin{align*}
\lim_{n\rightarrow \infty }\frac{1}{n}\sum_{j=0}^{n-1}\left\vert
f(T^{j}p)-f(T^{j}q)\right\vert ^{2}=& \lim_{n\rightarrow \infty }\frac{1}{n}%
\sum_{j=0}^{n-1}\left\vert f(T^{j+s}z)-f(T^{j+t}z)\right\vert ^{2} \\
=& \lim_{n\rightarrow \infty }\frac{1}{n}\sum_{j=0}^{n-1}g(T^{j}z)>%
\varepsilon .
\end{align*}%
That is, for any non-empty open subset $A\subset X$, there are $p,q\in A$
such that
\begin{equation*}
\lim_{n\rightarrow \infty }\frac{1}{n}\sum_{j=0}^{n-1}\left\vert
f(T^{j}p)-f(T^{j}q)\right\vert ^{2}>\varepsilon .
\end{equation*}%
Following the definition we know that $(X,T)$ is $f$-mean sensitive. By \cite%
[Theorem 2.13]{GM} $(X,T)$ is not almost $f$-mean equicontinous. And by
Lemma~\ref{lem:f-almost-mean-equi} $(X,T)$ is also not almost mean
equicontinuous. Now using dichotomy theorem in \cite{LTY13, GM14} we have
that $(X,T)$ is mean sensitive.
\end{proof}

In the following theorem, (2) and (3) are corollaries from previously known
results. We write them together for comparison.

\begin{thm}
\label{thm:meanSen-SuffCondition} Let $(X,T)$ be a transitive t.d.s. and $%
\mu $ be an ergodic measure on $X$. If one of the followings hold:

\begin{enumerate}
\item \label{thm:meanSen-SuffCondition:1} $(X,T)$ is uniquely ergodic,
topologically mixing and has positive topological entropy,

\item \label{thm:meanSen-SuffCondition:2} $\mu $ has full support and $%
(X,\mu ,T)$ does not have discrete spectrum (in particular if it is
non-trivial and measurably weakly mixing or has positive entropy with
respect to $\mu$),

\item \label{thm:meanSen-SuffCondition:4} $(X,T)$ is minimal and has
positive topological entropy,
\end{enumerate}

then $(X,T)$ is mean sensitive.
\end{thm}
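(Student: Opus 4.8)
The plan is to obtain all three cases as consequences of Lemma~\ref{lem:non-almost-periodic-ergodic}. For a given ergodic $\mu$, that lemma needs two inputs: (i) a non-almost periodic function $f\in L^{2}(\mu)$, and (ii) the recurrence hypothesis that for every non-empty open $A\subset X$ and every infinite $S\subset\mathbb{Z}_{+}$ there are $s\neq t\in S$ and a $\mu$-generic point (with respect to continuous functions) lying in $T^{-s}A\cap T^{-t}A$. Input (i) will in every case come from the failure of discrete spectrum, since discrete spectrum is exactly the statement that every element of $L^{2}(\mu)$ is almost periodic (Halmos--von Neumann, recalled above), and this failure is in turn guaranteed by positive entropy (discrete spectrum forces zero entropy) or by non-trivial measurable weak mixing (which makes every almost periodic function constant). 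The real work is input (ii), and the three cases differ precisely in how it is verified; since \eqref{thm:meanSen-SuffCondition:2} and \eqref{thm:meanSen-SuffCondition:4} are already available from \cite{LTY13, GM14} and the list of known positive conditions, the point of the proof is case~\eqref{thm:meanSen-SuffCondition:1}, which I would treat first.

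For \eqref{thm:meanSen-SuffCondition:1}: by the variational principle, unique ergodicity gives $h_{\mu}(T)=h_{\mathrm{top}}(X,T)>0$, so $(X,\mu,T)$ is not of discrete spectrum and input (i) holds. For input (ii), unique ergodicity makes \emph{every} point of $X$ $\mu$-generic (the classical characterization of unique ergodicity), so the generic-point clause is free once the intersection $T^{-s}A\cap T^{-t}A$ is shown non-empty. Topological mixing supplies this: first, $T$ is surjective (put $V=X$ in the mixing condition to see that $T^{n}(X)$, being closed, is eventually all of $X$, whence $T(X)=X$), and second, $A\cap T^{-m}A\neq\varnothing$ for all large $m$. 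Given $A$ and an infinite $S$, I would choose $s<t$ in $S$ with $t-s$ large enough that $A\cap T^{-(t-s)}A\neq\varnothing$; then $T^{-s}A\cap T^{-t}A=T^{-s}\bigl(A\cap T^{-(t-s)}A\bigr)$ is non-empty because $T^{s}$ is onto. Lemma~\ref{lem:non-almost-periodic-ergodic} then yields mean sensitivity.

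For \eqref{thm:meanSen-SuffCondition:2}: input (i) is immediate from the hypothesis (and from the parenthetical special cases as noted above). For input (ii), full support gives $\mu(A)>0$ for every non-empty open $A$, so $\{T^{-s}A:s\in S\}$ is an infinite family of sets of the same positive measure $\mu(A)$, which therefore cannot be pairwise disjoint mod $\mu$; an inclusion--exclusion (Bonferroni) estimate then forces $\mu\bigl(T^{-s}A\cap T^{-t}A\bigr)>0$ for some $s\neq t\in S$. Since the set $G$ of $\mu$-generic points is $T$-invariant and of full measure (Birkhoff applied to a countable dense family in $C(X)$), the set $T^{-s}A\cap T^{-t}A\cap G$ has positive measure and in particular contains a generic point, so Lemma~\ref{lem:non-almost-periodic-ergodic} applies. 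Finally \eqref{thm:meanSen-SuffCondition:4} reduces to \eqref{thm:meanSen-SuffCondition:2}: the variational principle gives an ergodic $\mu$ with $h_{\mu}(T)>0$, minimality forces $\supp\mu=X$ (a non-empty closed invariant set), and positive entropy rules out discrete spectrum.

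The routine ingredients here are the pigeonhole/recurrence step and checking that the generic points meet the sets in question. The one place that needs genuine care is case~\eqref{thm:meanSen-SuffCondition:1}, where there is no control on $\mu(A)$ — the support of the unique measure may be a proper closed subset of $X$ — so the measure-theoretic recurrence argument of \eqref{thm:meanSen-SuffCondition:2} is unavailable and must be replaced by the purely topological argument via mixing and surjectivity of $T$; this substitution is essentially the only new step beyond the previously known cases, and I would expect it to be the main obstacle to writing the proof cleanly.
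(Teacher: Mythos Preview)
Your proposal is correct. For case~\eqref{thm:meanSen-SuffCondition:1} you argue exactly as the paper does (you spell out the surjectivity of $T$ under mixing, which the paper leaves implicit), and for case~\eqref{thm:meanSen-SuffCondition:4} both you and the paper reduce to~\eqref{thm:meanSen-SuffCondition:2} via the variational principle and minimality.

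The one genuine difference is case~\eqref{thm:meanSen-SuffCondition:2}. The paper does \emph{not} route this case through Lemma~\ref{lem:non-almost-periodic-ergodic}; instead it quotes directly a result from \cite{GM14} asserting that if $\mu$ has full support and $(X,\mu,T)$ lacks discrete spectrum, then there is $\delta>0$ such that every set of positive $\mu$-measure contains a pair witnessing mean sensitivity. Your alternative --- using $\mu(T^{-s}A)=\mu(A)>0$ and a pigeonhole/Bonferroni bound to force $\mu(T^{-s}A\cap T^{-t}A)>0$ for some $s\neq t\in S$, then intersecting with the full-measure set of generic points --- is a valid self-contained argument that keeps all three cases unified under Lemma~\ref{lem:non-almost-periodic-ergodic}. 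What you gain is uniformity and independence from the cited result; what the paper gains is brevity, since it outsources that case entirely to \cite{GM14}.
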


\begin{proof}
\eqref{thm:meanSen-SuffCondition:1} Since $(X,T)$ is uniquely ergodic (with
the unique ergodic measure $\mu$) and has positive topological entropy, then
it has no discrete spectrum, and hence there is a non-almost periodic
function $f\in L^{2}(\mu)$ (see, e.g., \cite{Wal82, GM}). Since $(X,T)$ is
topologically strongly mixing, for any non-empty open subset $A\subset X$
and any infinite subset $S\subset {\mathbb{Z}}_{+}$, it is clear that $%
\{T^{n}A\colon n\in S\}$ is dense in $X$. This implies that there are $s\neq
t\in S$ such that $T^{-s}A\cap T^{-t}A\neq\emptyset$. By the unique
ergodicity of $(X,T)$, we know that each non-empty open subset $T^{-s}A\cap
T^{-t}A$ contains a generic point for $\mu $. Using Lemma~\ref%
{lem:non-almost-periodic-ergodic} then $(X,T)$ is mean sensitive.

\eqref{thm:meanSen-SuffCondition:2} Since $\mu $ has full support every open
set has positive measure. In \cite{GM14} it is shown that if $(X,\mu ,T)$
does not have discrete spectrum then there exists $\delta >0$ such that for
every set of positive measure $U\subset X$, there are $x,y\in U$ such that
\begin{equation*}
\limsup_{n\rightarrow \infty }\frac{1}{n}\sum_{i=0}^{n-1}d(T^{i}x,T^{i}y)>%
\delta .\
\end{equation*}

\eqref{thm:meanSen-SuffCondition:4} This result can be found in \cite{LTY13,
GM14}. Note that for any minimal t.d.s. $(X,T)$, each ergodic measure on $X$
has full support. Using variation principle and %
\eqref{thm:meanSen-SuffCondition:2} it is not hard to see that $(X,T)$ is
mean sensitive.
\end{proof}

What we can see is that positive entropy plus a strong form of topological
ergodicity implies mean sensitivity. We don't know if every transitive (or
even weakly mixing) uniquely ergodic t.d.s. with positive entropy is mean
sensitive. Note that without the transitivity condition, this question has a
negative answer.

\begin{thm}
\label{unpos}There exists a uniquely ergodic t.d.s. with positive entropy
that is not sensitive.
\end{thm}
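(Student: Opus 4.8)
The statement, unlike Theorems~\ref{thm:Diam-Mean-Sen not Mean-Sen} and~\ref{thm:Dich-E-system}, does not ask for transitivity, and that is exactly what makes it cheap: to destroy sensitivity it suffices to produce a single equicontinuity point, and in a subshift an equicontinuity point is simply an isolated point. So the plan is to start from a minimal, uniquely ergodic subshift $(Z,\sigma)$ of positive topological entropy --- such systems exist, e.g.\ by the Jewett--Krieger theorem applied to a Bernoulli shift, or by Grillenberger's explicit strictly ergodic constructions --- and then graft onto it one isolated orbit landing inside $Z$, arranged so as to add no new invariant measure and to leave the entropy untouched.

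Concretely, fix $\zeta\in Z$, let $\#$ be a symbol not occurring in the alphabet of $Z$, put $z:=\#\zeta$ (so that $\sigma z=\zeta\in Z$) and set $X:=Z\cup\{z\}$, viewed as a subshift over the enlarged alphabet with the shift $\sigma$. Then $X$ is closed (a closed set together with one point) and $\sigma$-invariant ($\sigma(X)=Z\subseteq X$), hence $(X,\sigma)$ is a t.d.s.; it is not a transitive system, because no iterate of an open subset of $Z$ can meet the clopen singleton $\{z\}$. The three properties to check are the following. \emph{Not sensitive}: the cylinder $[\#]\cap X$ equals the open singleton $\{z\}$, so for every $\delta>0$ the non-empty open set $U=\{z\}$ is a singleton and there are no $x,y\in U$ and $n\in\mathbb{N}$ with $d(\sigma^{n}x,\sigma^{n}y)>\delta$. \emph{Uniquely ergodic}: any invariant measure $\nu$ satisfies $\nu(\{z\})=\nu(\sigma^{-1}\{z\}\cap X)=0$, since no point of $X$ carries the symbol $\#$ in its second coordinate, so $\nu$ is supported on $Z$ and equals the unique invariant measure $\mu_{Z}$ of $Z$. \emph{Positive entropy}: $h_{\mathrm{top}}(X)\ge h_{\mathrm{top}}(Z)>0$ as $Z\subseteq X$ is a subsystem (equivalently $h_{\mathrm{top}}(X)=h_{\mu_{Z}}(X)=h_{\mu_{Z}}(Z)>0$ by unique ergodicity and the variational principle).

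I do not anticipate a real obstacle: the only delicate points are citing a correct source for a strictly ergodic subshift of positive entropy and confirming that attaching the extra point neither produces a second invariant measure (the role of $\sigma^{-1}\{z\}=\varnothing$) nor lowers the entropy (impossible, $Z$ being a subsystem). The contrast with Theorems~\ref{thm:Diam-Mean-Sen not Mean-Sen} and~\ref{thm:Dich-E-system} is precisely that those impose transitivity, which rules out isolated points and thereby forces the intricate word-combinatorics; once transitivity is dropped, the example is essentially immediate. If in addition one wishes to eliminate the (harmless) dense orbit of $z$, it suffices to iterate the construction once, taking as core a non-minimal uniquely ergodic subshift of positive entropy --- itself obtained from a minimal one by the same attaching trick --- and choosing $\zeta$ inside its minimal set.
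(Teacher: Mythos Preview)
Your construction is correct and follows essentially the same idea as the paper's: start from a minimal, uniquely ergodic, positive-entropy subshift and graft on a piece with empty preimage, so that the added piece carries no invariant mass and provides an open set witnessing non-sensitivity. The only cosmetic difference is that you adjoin a single isolated point $z=\#\zeta$ mapping into $Z$, whereas the paper adjoins an entire Cantor set $\{2,3\}^{\mathbb{N}}$ which $T$ collapses to a fixed $y\in Y$; both yield non-transitive, uniquely ergodic systems of positive entropy that fail to be sensitive.
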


\begin{proof}
Let $Y\subset\left\{ 0,1\right\} ^{\mathbb{N}}$ a minimal uniquely ergodic
subshift with positive topological entropy. Let $X=Y\cup\left\{ 2,3\right\}
^{\mathbb{N}},$ $y\in Y$ and $T:X\rightarrow X$ defined as follows
\begin{equation*}
Tx=\left\{
\begin{array}{cc}
\sigma(x), & \text{if }x\in Y, \\
y, & \text{otherwise.}%
\end{array}
\right.
\end{equation*}
It is not hard to see that $(X,T)$ is uniquely ergodic and that it is not
sensitive (using the open set $\left\{ 2,3\right\} ^{\mathbb{N}}$).
\end{proof}



The following can be deduced from results in \cite{LTY13}.

\begin{thm}
Every nontrivial minimal topologically weakly mixing t.d.s. is mean
sensitive.
\end{thm}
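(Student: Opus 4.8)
The plan is to deduce the statement from the machinery of Lemma~\ref{lem:non-almost-periodic-ergodic} together with the measure-theoretic characterization of weak mixing. Let $(X,T)$ be a nontrivial minimal weakly mixing t.d.s. Since $X$ is minimal, every ergodic measure $\mu$ has full support, so we fix any such $\mu$. The first step is to observe that $(X,\mu,T)$ cannot have discrete spectrum: a minimal system with a discrete-spectrum ergodic measure of full support would be equicontinuous (via the Halmos--von Neumann theory and the fact that for minimal systems topological and measurable weak mixing are tightly linked), hence not topologically weakly mixing unless trivial. Alternatively, and more directly, I would invoke that topological weak mixing of a minimal system already forbids nonconstant continuous eigenfunctions, and combine this with the results from \cite{LTY13} quoted in the excerpt to get that $(X,T)$ fails to be (almost) mean equicontinuous.

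Given that, the cleanest route is: produce a non-almost-periodic $f\in L^2(\mu)$ (this exists the moment $(X,\mu,T)$ does not have discrete spectrum, by the Halmos--von Neumann characterization recalled in Section~\ref{sect:Positive-Cond}), and then verify the hypothesis of Lemma~\ref{lem:non-almost-periodic-ergodic}: for every nonempty open $A\subset X$ and every infinite $S\subset\Z_+$ there are $s\ne t\in S$ and a point $z$ generic for $\mu$ with $z\in T^{-s}A\cap T^{-t}A$. The obstacle here is that minimality and topological weak mixing by themselves only give that $T^{-s}A\cap T^{-t}A$ is nonempty open for suitable $s\ne t$ (weak mixing implies the return-time sets are thick, so among any infinite $S$ one finds $s,t$ with $N(A,A)\ni t-s$, giving nonempty open intersection), whereas Lemma~\ref{lem:non-almost-periodic-ergodic} asks for a \emph{generic} point inside that intersection. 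For a uniquely ergodic system every point is generic, so this would be immediate; but a general minimal weakly mixing system need not be uniquely ergodic, and this is the main gap to bridge.

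To close that gap I would instead appeal directly to Theorem~\ref{thm:meanSen-SuffCondition}\eqref{thm:meanSen-SuffCondition:2}: since $\mu$ has full support and $(X,\mu,T)$ does not have discrete spectrum (being measurably weakly mixing, which for a minimal system follows from topological weak mixing by a standard argument, or is simply part of the package in \cite{LTY13}), that result already yields mean sensitivity without needing genericity of points in small open sets. Thus the proof reduces to two verifications: (i) a nontrivial minimal topologically weakly mixing system carries an ergodic measure of full support that is measurably weakly mixing, hence has no discrete spectrum; and (ii) apply Theorem~\ref{thm:meanSen-SuffCondition}\eqref{thm:meanSen-SuffCondition:2}. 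Step (i) is where the real content lies: one must rule out that such a system is (measurably) conjugate to a rotation, which follows because a minimal system with nontrivial discrete spectrum component admits a nonconstant continuous eigenfunction (by the minimality-plus-unique-ergodicity-on-the-Kronecker-factor structure, or by Furstenberg's theorem that the maximal equicontinuous factor of a minimal system is itself minimal), contradicting topological weak mixing; nontriviality rules out the degenerate case where the only eigenfunctions are constants but the system is a point.

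I expect step (i)—specifically, passing from topological weak mixing to the \emph{measure-theoretic} statement ``no discrete spectrum'' for a suitable full-support ergodic measure—to be the main obstacle, since in full generality a minimal system may have many ergodic measures and one must be careful about which one to use; the safe choice is to note that topological weak mixing of the minimal system implies its maximal equicontinuous factor is trivial, hence \emph{every} ergodic measure has trivial Kronecker factor, so none has discrete spectrum, and then quote \cite{LTY13} or Theorem~\ref{thm:meanSen-SuffCondition}\eqref{thm:meanSen-SuffCondition:2}. The remaining steps are routine bookkeeping.
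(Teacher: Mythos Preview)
Your main route---reducing to Theorem~\ref{thm:meanSen-SuffCondition}\eqref{thm:meanSen-SuffCondition:2} by producing a full-support ergodic measure without discrete spectrum---has a genuine gap at exactly the spot you flagged as ``the main obstacle.'' The implication ``maximal equicontinuous factor trivial $\Rightarrow$ every ergodic measure has trivial Kronecker factor'' is false, and so is the weaker claim that a minimal topologically weakly mixing system must carry \emph{some} ergodic measure without discrete spectrum. Lehrer (Israel J.\ Math.\ \textbf{57} (1987)) proved that every ergodic automorphism of a Lebesgue space has a minimal, uniquely ergodic, topologically mixing model; applied to an irrational rotation this yields a nontrivial minimal topologically mixing (hence weakly mixing) t.d.s.\ whose unique invariant measure has purely discrete spectrum. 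For such a system every $f\in L^{2}(\mu)$ is almost periodic, so neither Lemma~\ref{lem:non-almost-periodic-ergodic} nor Theorem~\ref{thm:meanSen-SuffCondition}\eqref{thm:meanSen-SuffCondition:2} says anything---yet the system is mean sensitive by the theorem in question. The same example refutes your assertions that ``a minimal system with a discrete-spectrum ergodic measure of full support would be equicontinuous'' and that measure-theoretic eigenfunctions of a minimal system are continuous: both fail for the Lehrer models.

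The paper's argument avoids measures entirely. It runs by contradiction through the minimal dichotomy: if $(X,T)$ were not mean sensitive it would be mean equicontinuous, and then \cite[Corollary~3.6]{LTY13} says the maximal equicontinuous factor map $f_{eq}\colon X\to X_{eq}$ is a \emph{proximal} extension. Topological weak mixing forces $X_{eq}$ to be a singleton, so $(X,T)$ itself is proximal; but a proximal system has a fixed point as its unique minimal subset, so a minimal proximal system is trivial, contradicting the hypothesis. The structural input you are missing is this fact about mean equicontinuous minimal systems---that they sit as proximal extensions over their maximal equicontinuous factor---rather than any spectral statement about invariant measures.
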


\begin{proof}
Let $(X,T)$ be a non-trivial minimal topological weakly mixing t.d.s. Assume
that it is not mean sensitive. Then from \cite[Corollary 5.5]{LTY13} or \cite%
[Theorem 8]{GM14} $(X,T)$ is mean equicontinuous.

Let $f_{eq}:X\rightarrow X_{eq}$ be the maximal equicontinuous factor map.
Then by \cite[Corollary 3.6]{LTY13} $f_{eq}$ is proximal, i.e., $%
f_{eq}(x)=f_{eq}(y)$ if and only if
\begin{equation*}
\liminf_{n\rightarrow\infty}d(T^{n}x,T^{n}y)=0.
\end{equation*}
Since $(X,T)$ is minimal and weakly mixing, we have that $X$ has no
non-trivial equicontinuous factors, i.e. $X_{eq}$ is a singleton. It then
follows that $X$ is proximal, contradicting the assumptions that $(X,T)$ is
non-trivial minimal. 
We conclude $(X,T)$ is mean sensitive.
\end{proof}

\medskip Now we will study systems with shadowing property. Recall that a
sequence $\{x_{n}\}_{n=0}^{\infty }$ is a \emph{$\delta $-pseudo-orbit} for $%
T$ if $d(x_{{n+1}},Tx_{n})<\delta $ for all $n\in {\mathbb{Z}_{+}}$, and is
\emph{$\epsilon $-traced} by a point $x\in X$ if $d(T^{n}x,x_{n})<\epsilon $
for all $n\in {\mathbb{Z}_{+}}$. We say that a t.d.s. $(X,T)$ has the \emph{%
shadowing property} if for any $\varepsilon >0$ we can find a $\delta >0$
such that each $\delta $-pseudo-orbit for $T$ is $\varepsilon $-traced by
some point of $X$.

It is known that a transitive system with shadowing property is either
equicontinuous or sensitive (e.g., \cite[Theorem 6]{Moo11}). Now we show
that if the transitive system with shadowing property has positive
topological entropy then, it is mean sensitive. This result is inspired by
\cite{LLT16}.


To prove it, we need the notions of sensitive and distal pair. A pair $%
(x_1,x_2)\in X^2\setminus\Delta_2$ is called a \emph{sensitive pair} \cite%
{YZ08} if for any neighbourhood $U_i$ of $x_i$ ($i=1,2$), and any non-empty
open subset $U$ of $X$ there exist $k\in\mathbb{N}$ and $y_i\in U$ such that
$T^k y_i\in U_i$ for $i=1,2$; and a \emph{distal pair} (e.g. \cite{Fur81})
if $\liminf_{n\to\infty} d(T^{n} x_{1}, T^{n} x_{2})>0$.

\begin{thm}
\label{thm:Tran+Shadowing+PosEntr=>MeanSen} Let $(X,T)$ be a transitive
t.d.s. with shadowing property. If $(X,T)$ has positive entropy, then it is
mean sensitive.
\end{thm}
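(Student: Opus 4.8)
The plan is to exploit the interplay between positive topological entropy and the shadowing property to produce, inside every non-empty open set, a pair of points whose orbits stay far apart on a set of times of positive upper density; this will give mean sensitivity directly from the definition (via the equivalent density formulation in Remark~\ref{rem:equi}\eqref{rem:equi:3}). The key structural input is that positive entropy yields an entropy pair, hence (by standard results) a non-diagonal pair $(a,b)$ which is simultaneously a sensitive pair and a distal pair; concretely, $\liminf_{n\to\infty}d(T^n a,T^n b)=c>0$, and for any non-empty open $U$ and any neighbourhoods $U_a\ni a$, $U_b\ni b$ there is $k\in\mathbb N$ and $y_a,y_b\in U$ with $T^k y_a\in U_a$, $T^k y_b\in U_b$. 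Fix such a pair and set $\delta = c/4$.

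First I would fix an arbitrary non-empty open $U\subset X$ and build, using the sensitive-pair property together with shadowing, two pseudo-orbits that start inside $U$, synchronize to track the $(a,b)$-orbits for long blocks of time, and therefore are far apart (by at least $c/2$, after shrinking neighbourhoods) on those blocks. More precisely: choose a sequence of ``visiting times'' and, on the $j$-th block, have both pseudo-orbits follow $T^n a$ and $T^n b$ respectively for a block of length $\ell_j$ with $\ell_j\to\infty$; between blocks allow the pseudo-orbits to jump (a single $\delta$-small jump suffices by the sensitive-pair property, since we can land back near $a$ and near $b$ as often as we like, in lockstep, by transitivity and the definition of a sensitive pair). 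Choosing the block lengths $\ell_j$ to grow fast enough that $\sum_{i\le j}\ell_i \big/ \big(\text{time elapsed through block }j\big)$ does not tend to $0$ — e.g. making each new block at least as long as everything before it — guarantees that the set of times at which the two pseudo-orbits are $\ge c/2$ apart has positive upper density.

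Next I would invoke the shadowing property: pick $\varepsilon$ small (say $\varepsilon<c/8$ and small enough that $\varepsilon$-closeness is detected inside $U$ at time $0$), get the corresponding $\delta$, and $\varepsilon$-trace the two pseudo-orbits by genuine points $x,y\in X$. Since the pseudo-orbits start inside $U$ and $\varepsilon$ is small, $x,y\in U$. On each tracking block, $d(T^n x,T^n y)\ge d(T^n a,T^n b)-2\varepsilon \ge c/2 - 2\varepsilon \ge c/4 = \delta$. Hence $\{n: d(T^n x,T^n y)\ge \delta\}$ has positive upper density, and by the elementary observation in Remark~\ref{rem:equi}\eqref{rem:equi:3} this forces
\[
\limsup_{n\to\infty}\frac1n\sum_{i=0}^{n-1} d(T^i x,T^i y) > \delta'
\]
for some fixed $\delta'>0$ depending only on $\delta$ and $\diam(X)$. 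As $U$ was arbitrary, $(X,T)$ is mean sensitive.

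The main obstacle is the bookkeeping in the pseudo-orbit construction: one must simultaneously (i) keep both pseudo-orbits inside legitimate $\delta$-pseudo-orbits, (ii) return them repeatedly to neighbourhoods of $a$ and of $b$ \emph{at the same times} so the distal/large-distance property kicks in on long synchronized blocks, and (iii) control the block lengths so the good set has positive upper density rather than merely being infinite. Step (ii) is where the sensitive-pair property is essential rather than mere transitivity — it is exactly what lets us steer \emph{a single} pair of points (traced from $U$) to $(U_a, U_b)$ at a common time; and I expect the cleanest route is to cite the known facts that positive entropy gives an entropy pair, that entropy pairs of a system with a dense set of minimal points (which follows here, since transitivity plus shadowing implies the chain of properties used earlier in the paper, in particular that $(X,T)$ is an $M$-system) are sensitive pairs, and that distality of the pair can be arranged — alternatively, one replaces ``distal'' by working on $X\times X$ and using that the pair is not proximal, which is all that the large-distance-on-a-positive-density-set argument actually needs.
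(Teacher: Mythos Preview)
Your overall strategy coincides with the paper's: obtain a sensitive and distal pair $(v_1,v_2)$ (the paper simply cites \cite{LLT16} for this, rather than arguing via entropy pairs and an $M$-system claim, which you do not fully justify), then use the sensitive-pair property to steer a pair from the given open set $U$ into small neighbourhoods of $v_1,v_2$ at a common time, and finally apply shadowing.

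Where you diverge is in the execution, and there you are making your life unnecessarily hard. Since the pair is \emph{distal}, there exist $\varepsilon>0$ and $m$ with $d(T^\ell v_1,T^\ell v_2)>3\varepsilon$ for \emph{all} $\ell\ge m$. The paper therefore uses a \emph{single} jump: the pseudo-orbit $x_i,Tx_i,\ldots,T^{n-1}x_i,v_i,Tv_i,\ldots$ is $\varepsilon$-traced by some $y_i\in U$, and then $d(T^k y_1,T^k y_2)>\varepsilon$ holds for every $k\ge m+n$. This gives separation on a cofinite set, so the Ces\`aro average exceeds $\varepsilon$ immediately --- no block lengths, no density bookkeeping, no repeated returns are required. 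Your iterated block construction, with its returns ``in lockstep'' to $(a,b)$, would work but is redundant once you have distality; it would only be needed if the pair were merely non-asymptotic ($\limsup>0$) rather than distal ($\liminf>0$). In short: same idea, but you should collapse your multi-block pseudo-orbit to a single switch and read off mean sensitivity directly.
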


\begin{proof}
Following from the proof of Lemma 3.3 in \cite{LLT16} we know that there
exists a sensitive and distal pair $(v_1,v_2)$ in $X^{2}$. 
By the definition of distal pair, there are $\varepsilon>0$ and $m\in\mathbb{%
N}$ such that
\begin{equation*}
d(T^{\ell} v_{1}, T^{\ell} v_{2})>3\varepsilon
\end{equation*}
for all $\ell\ge m$. 
For this $\varepsilon$, choose $\delta>0$ such that each $\delta$%
-pseudo-orbit can be $\varepsilon$-traced. Let $U, W$ be non-empty open
subsets of $X$ with $d(W, X\setminus U)>\varepsilon$, and $V_{i}$ be
neighbourhoods of $v_{i}$ with $\diam (V_{i})<\delta$ for $i=1,2$. Since $%
(v_{1},v_{2})$ is a sensitive pair, there are $x_{1}, x_{2}\in W$ and $n\in%
\mathbb{N}$ such that
\begin{equation*}
T^{n} x_{1}\in V_{1} \text{ and } T^{n} x_{2}\in V_{2}.
\end{equation*}
Fix $i\in\{1,2\}$. It is easy to see that the sequence
\begin{equation*}
x_{i}, Tx_{i}, \ldots, T^{n-1}x_{i}, v_{i}, T v_{i} \ldots
\end{equation*}
is a $\delta$-pseudo-orbit. Then there exists $y_{i}\in X$ such that
\begin{equation*}
d(x_{i}, y_{i})<\varepsilon\text{ and } d(T^{k-n} v_{i}, T^{k}
y_{i})<\varepsilon, \ k=n,n+1,\ldots.
\end{equation*}
This implies that $y_{i}\in U$ since $d(W, X\setminus U)>\varepsilon$, and
\begin{equation*}
d(T^{k} y_{1}, T^{k} y_{2})>\varepsilon
\end{equation*}
for all $k\ge m+n$. Hence
\begin{equation*}
\limsup_{n\to\infty}\frac{1}{N}\sum_{n=0}^{N-1} d(T^{n} y_{1}, T^{n}
y_{2})>\varepsilon,
\end{equation*}
showing that $(X,T)$ is mean sensitive. 
\end{proof}

%

As a consequence we have that

\begin{cor}
\label{cor:Trans+ShadowProp} Each transitive t.d.s. with shadowing property
is either mean equicontinuous or mean sensitive.
\end{cor}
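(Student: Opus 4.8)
The plan is to derive Corollary~\ref{cor:Trans+ShadowProp} by combining the dichotomy theorem for transitive systems with the positive-entropy criterion just established in Theorem~\ref{thm:Tran+Shadowing+PosEntr=>MeanSen}. Recall from the sketch of the dichotomy theorem in Section~\ref{sect:preliminaries} that every transitive t.d.s.\ is either mean sensitive or almost mean equicontinuous; so it suffices to show that a transitive system with the shadowing property that is \emph{not} mean sensitive is actually mean equicontinuous (not merely \emph{almost} mean equicontinuous).

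The first step is to invoke that dichotomy: assume $(X,T)$ is transitive, has the shadowing property, and is not mean sensitive; then $(X,T)$ is almost mean equicontinuous. The second step is to observe that, by Theorem~\ref{thm:Tran+Shadowing+PosEntr=>MeanSen}, a transitive system with shadowing that is not mean sensitive must have zero topological entropy. The third and crucial step is to upgrade ``almost mean equicontinuous'' to ``mean equicontinuous.'' Here I would use the fact that a transitive system with the shadowing property and dense minimal points is, in the relevant sense, close to being an $M$-system; more directly, a transitive system with shadowing and zero entropy should be shown to be uniquely ergodic (or at least minimal). The key known input is that a \emph{transitive mean equicontinuous} t.d.s.\ is uniquely ergodic and minimal (see Remark~\ref{rem:dich}\eqref{rem:dich:1} and the references to \cite{LTY13}); conversely, one argues that an almost mean equicontinuous transitive system with shadowing cannot have a non-minimal point, because the chain-mixing behaviour forced by shadowing together with transitivity would produce a proximal cell obstructing almost mean equicontinuity, or would directly force mean sensitivity via an $M$-system-type argument as in Corollary~\ref{cor:Dich-M-system}.

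Concretely, the cleanest route: a transitive system with the shadowing property has dense minimal points (this is a standard consequence of shadowing plus transitivity — every periodic pseudo-orbit is shadowed, giving many minimal subsets), hence it is an $M$-system. By Corollary~\ref{cor:Dich-M-system} it is either Banach-mean sensitive or Banach-mean equicontinuous; in the latter case, being an $M$-system, Theorem~\ref{thm:Banach-mean-equi-M-system} forces it to be minimal. Now a minimal system with the shadowing property and zero entropy, being almost mean equicontinuous and minimal, is mean equicontinuous (the dichotomy for minimal systems from \cite{LTY13, GM14} gives minimal $\Rightarrow$ mean equicontinuous or mean sensitive, and we are in the non-mean-sensitive case). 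Combining: either $(X,T)$ is mean sensitive, or it is an $M$-system which is not Banach-mean sensitive, hence minimal, hence (not being mean sensitive) mean equicontinuous.

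I expect the main obstacle to be the bookkeeping in the case analysis connecting the \emph{Banach-mean} notions (where the $M$-system dichotomy of Corollary~\ref{cor:Dich-M-system} lives) with the plain \emph{mean} notions in the statement: one must be careful that ``not mean sensitive'' together with the $M$-system structure really lands us in ``Banach-mean equicontinuous,'' which requires knowing mean sensitivity implies Banach-mean sensitivity (true, from the implication diagram in Section~\ref{sect:Intro}) so that a non-mean-sensitive $M$-system failing to be Banach-mean equicontinuous would be Banach-mean sensitive — and then reconciling this with Theorem~\ref{thm:Banach-mean-equi-M-system}. The verification that shadowing plus transitivity yields dense minimal points is routine and can be cited (e.g.\ from the literature on shadowing, or deduced as in \cite{Moo11}), so the substance is entirely in assembling Theorem~\ref{thm:Tran+Shadowing+PosEntr=>MeanSen}, the transitive dichotomy, Corollary~\ref{cor:Dich-M-system}, and the minimal dichotomy of \cite{LTY13, GM14} in the right order.
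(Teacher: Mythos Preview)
Your route through $M$-systems and Banach-mean notions has a genuine gap that you flag in the last paragraph but do not close. After establishing that a transitive system with shadowing is an $M$-system and applying Corollary~\ref{cor:Dich-M-system}, you land in one of two cases. In the Banach-mean equicontinuous case your argument works (Theorem~\ref{thm:Banach-mean-equi-M-system} gives minimality, and the minimal dichotomy finishes). But in the Banach-mean sensitive case you are stuck: ``not mean sensitive'' does \emph{not} preclude ``Banach-mean sensitive'' (the implication in Section~\ref{sect:Intro} goes the other way), so this branch is perfectly consistent and Theorem~\ref{thm:Banach-mean-equi-M-system} says nothing about it. There is no tool in the paper that extracts a contradiction from ``transitive, shadowing, zero entropy, Banach-mean sensitive, not mean sensitive''. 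Your appeal to Remark~\ref{rem:dich}\eqref{rem:dich:1} in the earlier sketch is also a misreading: that remark explicitly exhibits transitive mean equicontinuous systems that are \emph{not} minimal, so it cannot be used to upgrade ``almost mean equicontinuous'' to ``mean equicontinuous'' in the way you suggest.

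The paper's intended argument is much shorter and bypasses all of this. The point is that the sentence preceding Theorem~\ref{thm:Tran+Shadowing+PosEntr=>MeanSen} invokes \cite{Moo11}, where one finds not only the equicontinuous/sensitive dichotomy but the stronger fact that a transitive system with shadowing that is sensitive already has positive topological entropy (equivalently: transitive $+$ shadowing $+$ zero entropy $\Rightarrow$ equicontinuous). Granting this, the corollary is a one-line consequence: if the entropy is positive, Theorem~\ref{thm:Tran+Shadowing+PosEntr=>MeanSen} gives mean sensitivity; if the entropy is zero, the system is equicontinuous and hence mean equicontinuous. Your detour through $M$-systems and Banach-mean notions is both unnecessary and, as it stands, incomplete --- and the missing ingredient needed to close your gap is precisely this same entropy dichotomy from \cite{Moo11}, at which point the direct argument is simpler.
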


\part{Applications}

In this part we show applications of our previous results. Corollary \ref%
{Dich-E-Cor} was already explained in Section \ref{sect:Proof-Of-Theorem 1.4}%
. The other applications need further explanations.

\section{Local mean equicontinuity}

\label{sect:LME}

In \cite{GW00} Glasner and Weiss introduced the concept of local
equicontinuity. A t.d.s. $(X,T)$ is \textit{locally equicontinuous }if for
every $x\in X$ we have that $\overline{\orb(x,T)}$ is almost equicontinuous.
Inspired by this we define local mean equicontinuity.

\begin{de}
A t.d.s. $(X,T)$ is \textit{locally mean equicontinuous }if for every $x\in
X $ we have that $\overline{\orb(x,T)}$ is almost mean equicontinuous.
\end{de}

In general locally equicontinuous systems have zero topological entropy.
This comes from the fact that almost equicontinuous systems have zero
topological entropy.

We have that almost mean equicontinuous systems may have positive
topological entropy (e.g. the example in Section \ref{sect:Proof-Of-Theorem
1.4}), nonetheless locally mean equicontinuous systems always have zero
topological entropy.

\begin{thm}
\label{thm:LME-0-entropy} Every locally mean equicontinuous t.d.s. has zero
topological entropy
\end{thm}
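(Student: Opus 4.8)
The plan is to reduce the statement, via the variational principle, to an entropy bound for a single ergodic measure, and then to invoke Theorem \ref{thm:meanSen-SuffCondition}\eqref{thm:meanSen-SuffCondition:2}. Since $h_{\mathrm{top}}(X,T)=\sup\{h_\mu(T):\mu\in M(X,T)\text{ ergodic}\}$, it suffices to prove $h_\mu(T)=0$ for every ergodic $\mu$. So I would fix an ergodic $\mu$ and argue by contradiction, assuming $h_\mu(T)>0$.

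The first step is to pass from $X$ to $Z:=\supp(\mu)$. As $\mu$ is $T$-invariant and $T$ is continuous, $Z$ is a closed $T$-invariant set, so $(Z,T)$ is a subsystem and $\mu\in M(Z,T)$ has full support in $Z$. By the ergodic theorem $\mu$-almost every point is generic for $\mu$, and a generic point $z$ satisfies $\supp(\mu)\subseteq\overline{\orb(z,T)}$; since also $\overline{\orb(z,T)}\subseteq Z$ for $z\in Z$, we obtain $\overline{\orb(z,T)}=Z$ for $\mu$-a.e.\ $z$. Fixing such a $z$, the system $(Z,T)$ is transitive (with transitive point $z$) and carries the ergodic measure $\mu$ of full support.

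This is where the locality hypothesis enters. Because $Z=\overline{\orb(z,T)}$ is the orbit closure of a point $z\in X$, the assumption that $(X,T)$ is locally mean equicontinuous gives that $(Z,T)$ is almost mean equicontinuous, hence, by the equicontinuity/sensitivity dichotomy for transitive systems (\cite{LTY13,GM14}), $(Z,T)$ is \emph{not} mean sensitive. On the other hand, $(Z,T)$ is transitive, $\mu$ is ergodic with full support in $Z$, and $h_\mu(T)>0$; so Theorem \ref{thm:meanSen-SuffCondition}\eqref{thm:meanSen-SuffCondition:2}, applied to $(Z,T)$ and using its ``positive entropy with respect to $\mu$'' clause, forces $(Z,T)$ to be mean sensitive. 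This contradiction shows $h_\mu(T)=0$ for every ergodic $\mu$, whence $h_{\mathrm{top}}(X,T)=0$.

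The step I expect to require the most care is the reduction to $Z=\supp(\mu)$ together with the bookkeeping of which system the hypotheses refer to: Theorem \ref{thm:meanSen-SuffCondition}\eqref{thm:meanSen-SuffCondition:2} needs $\mu$ to have \emph{full} support, which typically fails for $\mu$ viewed on $X$ but holds on $Z$, and it needs $(Z,T)$ to be transitive, which is why the genericity argument producing a transitive point of $Z$ is essential. This is also precisely the place where ``locally mean equicontinuous'' is strictly stronger than ``almost mean equicontinuous'': in an LME system \emph{every} orbit closure, in particular $\supp(\mu)$ for every ergodic $\mu$, is almost mean equicontinuous, whereas the Devaney chaotic example of Section \ref{sect:Proof-Of-Theorem 1.4}, which is almost mean equicontinuous but has a positive-entropy minimal (hence mean sensitive, hence not almost mean equicontinuous) subsystem, fails to be LME and so does not contradict the theorem.
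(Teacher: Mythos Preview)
Your proof is correct and follows essentially the same route as the paper: both arguments pass, via the variational principle, to an ergodic measure $\mu$ of positive entropy, identify its support with an orbit closure $\overline{\orb(z,T)}$ (using genericity), and then apply Theorem~\ref{thm:meanSen-SuffCondition}\eqref{thm:meanSen-SuffCondition:2} to conclude that this orbit closure is mean sensitive, contradicting the LME hypothesis. Your write-up is simply more explicit about the bookkeeping (full support on $Z$, transitivity of $Z$, the dichotomy) than the paper's three-line version.
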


\begin{proof}
Assume $(X,T)$ has positve topological entropy. This implies there exists a
point $x\in X$ and an ergodic measure $\mu$ such that $\mu$ has positive
entropy and supp$(\mu)=\overline{\orb(x,T)}.$ By Theorem \ref%
{thm:meanSen-SuffCondition}\eqref{thm:meanSen-SuffCondition:2} we have that $%
\overline{\orb(x,T)}$ is mean sensitive. Hence $(X,T)$ is not locally mean
equicontinuous.
\end{proof}

Theorem 1.3 in \cite{GW00} says that if $(X,T)$ is locally equicontionuous
then every invariant ergodic probability measure on $X$ is supported on a
minimal subsystem . This is not true for locally mean equicontinuous systems
(as described in Theorem \ref{thm:LME}).

\begin{thm}
\label{thm:LME-nonMinimal} There exists a non-minimal E-system that is
locally mean equicontinuous.
\end{thm}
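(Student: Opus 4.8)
The goal is to construct a non-minimal $E$-system that is locally mean equicontinuous, i.e., one whose every orbit closure is almost mean equicontinuous. The natural strategy is to reuse the machinery of Section \ref{sect:Proof-Of-Theorem 1.4}, but to be careful: the system $(X,\sigma)$ built there is Devaney chaotic (hence an $M$-system, hence an $E$-system) and almost mean equicontinuous, but it is \emph{not} locally mean equicontinuous, because it contains periodic points $p$ whose orbit closures are trivial — those are fine — but more importantly it contains the minimal subshift $(Y,\sigma)$ of positive entropy as a subsystem, and by Theorem~\ref{thm:meanSen-SuffCondition}\eqref{thm:meanSen-SuffCondition:4} (or directly by \cite{LTY13, GM14}) a minimal positive-entropy system is mean sensitive, so $\overline{\orb(y,\sigma)}=Y$ is not almost mean equicontinuous. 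The fix is therefore to run the \emph{same} construction but starting from a minimal subshift $(Y,\sigma)$ of \emph{zero} entropy (for instance a Sturmian subshift), so that no positive-entropy minimal subsystem is introduced.

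So first I would take $(Y,\sigma)$ to be an infinite minimal subshift of zero topological entropy, and feed it into the recursive construction of $A_n, B_n$ from Section~\ref{sect:Proof-Of-Theorem 1.4}, obtaining the transitive point $x=\lim_n A_n0^\infty$ and $X=\overline{\orb(x,\sigma)}$. By Proposition~\ref{prop:Devaney} the system $(X,\sigma)$ is Devaney chaotic, hence an $M$-system, hence an $E$-system; and it is non-minimal since it contains the periodic points of the form $\sigma^t(A_n0^{|A_{n+1}|})^\infty$ as well as the proper minimal subsystem $Y$. It remains to verify local mean equicontinuity: I must show that for every $z\in X$, the orbit closure $\overline{\orb(z,\sigma)}$ is almost mean equicontinuous. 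The orbit closures fall into a short list of types: (i) $z$ is a periodic point, so $\overline{\orb(z,\sigma)}$ is a finite set, trivially (mean) equicontinuous; (ii) $z$ lies in $Y$, so $\overline{\orb(z,\sigma)}=Y$ is minimal of zero entropy — and here I would invoke that a minimal subshift all of whose points are Sturmian-like is in fact mean equicontinuous (Sturmian systems are even equicontinuous-by-almost-1-1, indeed mean equicontinuous by \cite{LTY13}); (iii) $z$ is a transitive point of $X$, so $\overline{\orb(z,\sigma)}=X$, which is almost mean equicontinuous by Proposition~\ref{prop:P-system}; (iv) the remaining $z$ — those that eventually land in $0^\infty$ or whose orbit closure sits between a shift of a "tail" and $Y$ — and for these I would argue directly, as in the proof of Proposition~\ref{prop:P-system}, that $0^\infty$ (or the relevant point) is a mean equicontinuous transitive point of $\overline{\orb(z,\sigma)}$, using Lemma~\ref{lem:Count-3} exactly as before to bound $\frac1n\sum_{i=0}^{n-1}d(\sigma^i z, 0^\infty)$.

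\textbf{The main obstacle.} The delicate point is case (iv) together with a careful enumeration of \emph{all} orbit closures in $X$: one must be sure that no orbit closure is a positive-entropy or otherwise mean-sensitive subsystem. Since the only "new" words introduced at level $n$ beyond concatenations of $A_{n-1}$-blocks and runs of $0$'s come from $C_{n}\subset Y$, and $Y$ has zero entropy, every subsystem of $X$ should have zero entropy; and the orbit closure of any $z\in X$ is either finite, equals $Y$, equals $X$, or is a "one-sided" system having $0^\infty$ as a mean equicontinuous transitive point — this last claim is where the real work lies, and it requires rerunning the density estimate of Proposition~\ref{prop:P-system} (hence Lemmas~\ref{lem:P-system-almostMeanEqui-1}--\ref{lem:Count-3}) for an arbitrary $z$, noting that those lemmas only used that $z$ begins with some $A_m$ or that tails of $z$ match tails of $x$. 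Once that verification is in place, local mean equicontinuity follows, and combined with Theorem~\ref{thm:LME-0-entropy} (which re-derives zero entropy) and the presence of the invariant measure with full support coming from the $E$-system structure — in particular the ergodic measure on the non-minimal part — we obtain the desired example, establishing Theorem~\ref{thm:LME-nonMinimal} and completing the proof of Theorem~\ref{thm:LME}.
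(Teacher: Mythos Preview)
Your overall plan---feed a zero-entropy minimal $Y$ into the Section~\ref{sect:Proof-Of-Theorem 1.4} construction and then check that every orbit closure is almost mean equicontinuous---is sound in spirit, but you have chosen an unnecessarily hard instance and your case analysis has a gap.

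The paper simply takes $(Y,\sigma)=\{0^\infty\}$, the trivial one-point subshift. With this choice $C_n=0^n$, and the only symbol $1$'s in $x$ come from the $A_1=101$ blocks. The payoff is that the orbit-closure analysis collapses: if $z\notin\orb(x,\sigma)$ then, by inspecting the construction, $z$ is either periodic (a shift of some $(A_i0^{|A_{i+1}|})^\infty$) or of the form $B0^\infty$ with $B$ a suffix of some $A_n$; in either case $\overline{\orb(z,\sigma)}$ is finite, hence trivially mean equicontinuous. So local mean equicontinuity is immediate once Proposition~\ref{prop:P-system} handles the transitive orbit.

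Your choice of an \emph{infinite} $Y$ (Sturmian) buys nothing and forces you into the delicate case~(iv), which you leave open. Two concrete issues there: first, your proposed device of exhibiting ``$0^\infty$ as a mean equicontinuous transitive point of $\overline{\orb(z,\sigma)}$'' cannot work---$0^\infty$ is a fixed point, so it is transitive only in the singleton $\{0^\infty\}$; for any larger orbit closure you would need $z$ itself (or some iterate) to be the mean equicontinuous transitive point, and Lemma~\ref{lem:Count-3} only applies to points beginning with some $A_m$, which an arbitrary limit point need not do. Second, with $Y$ infinite the list (i)--(iv) is not obviously exhaustive: limit points arising at the interfaces between the $C_m$-block and the $(A_i0^{|A_{i+1}|})$-blocks inside $B_m$, or deep inside growing $C_m$'s, may produce orbit closures that are neither $X$, nor $Y$, nor finite, and you have not classified these. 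The paper's choice $Y=\{0^\infty\}$ eliminates the entire difficulty.
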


\begin{proof}
Consider the example constructed in Section \ref{sect:Proof-Of-Theorem 1.4}
with $(Y,\sigma)=\{0^\infty\}$. We already knew it is a non-minimal $E$%
-system (since it is Devaney chaotic as Proposition \ref{prop:Devaney}
stated); we will show it is locally mean equicontinuous. Let $x$ be the
transitive point constructed in Section \ref{sect:Proof-Of-Theorem 1.4} and $%
y\in X$. If $y\in \orb(x,\sigma)$ then $\overline{\orb(y,\sigma)}=X$ is
almost mean equicontinuous (this is proved in Section \ref%
{sect:Proof-Of-Theorem 1.4}). If $y\notin \orb(x,\sigma)$ then observing the
construction, $y$ is either a periodic point or a limit point with the form $%
B0^\infty$, where $B$ is a subword of some $A_n$ (furthermore the possible $%
B $ is the suffix of $A_n$). This implies that $\overline{\orb(y,\sigma)}$
is a finite set and hence almost mean equicontinuous, completing then the
whole proof.
\end{proof}

Now we are going to prove Theorem \ref{thm:LME}.

\begin{proof}[Proof of Theorem \protect\ref{thm:LME}]
The first part is Theorem \ref{thm:LME-0-entropy}. The second part follows
from Theorem \ref{thm:LME-nonMinimal}, since it is clear that $X=\supp (\mu)$
is not equal to the union of its minimal subsystems (in fact, $X$ is the
closure of the union of its minimal subsystems).
\end{proof}

\section{Hyperspaces}

\label{sect:Application-on-Hyperspace}

Let $X$ be a compact metric space. We define the hyperspace $K(X)$
as the space of non-empty closed subsets of $X$ equipped with the \textit{%
Hausdorff metric} $d_{H}$; which is defined by
\begin{align*}
d_{H}(A,B)& =\max \{\max_{x\in A}\min_{y\in B}d(x,y),\max_{y\in B}\min_{x\in
A}d(x,y)\} \\
& =\inf \{\varepsilon >0\colon B(A,\varepsilon )\supset B,\,B(B,\varepsilon
)\supset A\}
\end{align*}%
for $A,B\in K(X)$ and $B(A,\varepsilon )=\cup _{a\in A}B(a,\varepsilon )$.
The Hausdorff metric $d_{H}$ can induce a topology on $K(X)$ known as the
the \textit{Vietoris topology}.

Let $U_{1},\ldots ,U_{n}$ be non-empty open subsets of $X$ and fix $n\in
\mathbb{N}$. We have that
\begin{equation*}
\langle U_{1},\ldots ,U_{n}\rangle =\{A\in K(X):A\subset \cup _{i=1}^{n}U_{i}%
\mbox{ and }A\cap U_{i}\neq \emptyset \mbox{ for each }i=1,\ldots ,n\}.
\end{equation*}%
The following family
\begin{equation*}
\{\langle U_{1},\ldots ,U_{n}\rangle :U_{1},\ldots ,U_{n}%
\mbox{ are
non-empty open subsets of }X,n\in \mathbb{N}\}
\end{equation*}%
forms a basis for the Vietoris topology. With this topology $K(X)$ is
compact.

Given a continuous map $T:X\rightarrow X$ we induce a continuous map $%
T_{K}\colon K(X)\rightarrow K(X)$ by
\begin{equation*}
T_{K}(C)=TC\mbox{ for }C\in K(X).
\end{equation*}
We have that $(K(X),T_{K})$ is a t.d.s. We refer the reader not familiar
with hyperspace to \cite{Nad92} for more details.

\medskip For $n\in \mathbb{N}$ we define $K_{n}(X):=\{A\in K(X)\colon
|A|\leq n\}$ and $K_{\infty }(X):=\cup _{n\geq 1}K_{n}(X)$. The following
facts are easy to check.

\begin{lem}
\label{lem:Density-K(X)} Let $(X,T)$ be a t.d.s. Then

\begin{enumerate}
\item $K_{n}(X)$ is closed and $K_{\infty}(X)$ is dense in $K(X)$ (\cite[%
Lemma 2]{BS75});

\item if $Y$ is a dense subset of $X$, then $K_{\infty}(Y)$ is dense in $%
K(X) $.
\end{enumerate}
\end{lem}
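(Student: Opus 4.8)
The plan is to treat the two parts separately, both by elementary covering arguments with the Hausdorff metric. Since $X$ is compact, $(K(X),d_H)$ is itself a compact metric space, so I may freely argue with sequences.

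For part (1), to see that $K_{n}(X)$ is closed I would observe that it is the image of the compact space $X^{n}$ under the map $(x_{1},\dots ,x_{n})\mapsto \{x_{1},\dots ,x_{n}\}$, which is continuous into $(K(X),d_{H})$; a continuous image of a compact set is compact, hence closed in $K(X)$. (A direct alternative: if $A_{k}\to A$ in $d_H$ with $|A_{k}|\le n$ for all $k$ but $A$ contains $n+1$ distinct points $a_{0},\dots ,a_{n}$, choose $\varepsilon>0$ so small that the balls $B(a_{i},\varepsilon)$ are pairwise disjoint; for all large $k$ each of these balls must meet $A_{k}$, forcing $|A_{k}|\ge n+1$, a contradiction.) For density of $K_{\infty }(X)$, fix $A\in K(X)$ and $\varepsilon>0$; using compactness of $A$, extract from the open cover $\{B(a,\varepsilon):a\in A\}$ a finite subcover $\{B(a_{i},\varepsilon)\}_{i=1}^{m}$ with each $a_{i}\in A$. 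Then $F:=\{a_{1},\dots ,a_{m}\}\in K_{\infty }(X)$ satisfies $F\subset A$ and every point of $A$ lies within $\varepsilon$ of $F$, so $d_{H}(F,A)\le \varepsilon$; letting $\varepsilon\to 0$ shows $K_\infty(X)$ is dense.

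For part (2), I would combine the previous approximation with a pointwise approximation by points of $Y$. Given $A\in K(X)$ and $\varepsilon>0$, first pick $F=\{a_{1},\dots ,a_{m}\}\in K_{\infty }(X)$ with $d_{H}(F,A)<\varepsilon/2$ as above; then, using density of $Y$ in $X$, choose $y_{i}\in Y$ with $d(a_{i},y_{i})<\varepsilon/2$ for each $i$ and set $F'=\{y_{1},\dots ,y_{m}\}\in K_{\infty }(Y)$. Matching $a_{i}\leftrightarrow y_{i}$ gives $d_{H}(F,F')<\varepsilon/2$, whence $d_{H}(F',A)\le d_{H}(F',F)+d_{H}(F,A)<\varepsilon$, and density follows.

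I do not expect any genuine obstacle here: the only points requiring care are the two-sided nature of $d_{H}$ in the estimates $d_{H}(F,A)\le \varepsilon$ and $d_{H}(F,F')<\varepsilon/2$, and the observation that it is compactness of $X$ (hence of every member of $K(X)$) that makes the finite subcovers available. The closedness of $K_{n}(X)$ is the one spot where a small idea is needed, and the compact-image argument disposes of it cleanly.
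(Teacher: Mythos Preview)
Your proposal is correct. The paper itself does not supply a proof of this lemma: it simply states that these facts ``are easy to check'' and cites \cite[Lemma~2]{BS75} for part~(1). Your arguments---the compact-image trick for closedness of $K_n(X)$, the finite $\varepsilon$-net for density of $K_\infty(X)$, and the two-step approximation for part~(2)---are exactly the standard ones and fill in the omitted details without issue.
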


\subsection{Proof of Theorem \protect\ref{thm:K(X)-Mean-Sen-X-mean-Equi}}

As mentioned before, every generalized diam-form of sensitivity can be
inherited from the hyperspace $K(X)$ to the phase space $X$ (see for example
\cite{WWC15}); nonetheless we will prove Theorem \ref%
{thm:K(X)-Mean-Sen-X-mean-Equi} which says this is not the case for (Banach)
mean sensitivity.

\begin{proof}[Proof of Theorem \protect\ref{thm:K(X)-Mean-Sen-X-mean-Equi}]
Let $(X,\sigma )$ be the example constructed in the proof of Theorem \ref%
{thm:Diam-Mean-Sen not Mean-Sen} ( in section \ref{sect:Proof-Of-Theorem 1.3}%
). By Theorem \ref{thm:Diam-Mean-Sen not Mean-Sen} we have that $(X,\sigma )$
is Banach-mean equicontinuous and hence it has zero topological entropy (see
for example \cite[Theorem 3.8]{LTY13}). Now we show that $(K(X),\sigma _{K})$
is mean sensitive.

Let $\mathcal{U}$ be a non-empty open subset of $K(X)$. Considering that $%
K_{\infty }(X) $ is dense in $K(X)$, $\orb(x,\sigma )$ is dense in $X$ and
Lemma~\ref{lem:Density-K(X)} we have that there is a point
\begin{equation*}
P=\{p_{1},\dots ,p_{m}\}\in K_{\infty }(\orb(x,\sigma ))\cap \mathcal{U}
\end{equation*}%
for some $m\in \mathbb{N}$. Let $\varepsilon >0$ be such that $%
B_{d_{H}}(P,\varepsilon )\subset \mathcal{U}.$ For each $1\leq k\leq m$
there exists $N_{k}>0$ such that $1/N_{k}<\varepsilon $. Since each $p_{k}$
lies on the orbit of $x$ under shift map we can see $(p_{k})_{[0,N_{k}]}$ as
subword for a certain $A_{n_{k}}$. Without loss of generality, we can assume
that the word $A_{n_{k}}$ ends with $(p_{k})_{[0,N_{k}]}$ (choose large
enough $N_{k}$ if necessary). By the construction, we have that each word $%
A_{i_{1}}$ ends with $A_{i_{2}}$ and appears in $B_{i_{3}}$ for any $%
i_{2}<i_{1}<i_{3}\in \mathbb{N}$.

Now we construct a closed subset $Q_{k}$ of $X$ as follows
\begin{align*}
Q_{k}=\{& (p_{k})_{[0,N_{k}]}10^{|A_{n_{k}}|-1}\dots , \\
& (p_{k})_{[0,N_{k}]}010^{|A_{n_{k}+1}|-2}\dots ,\cdots , \\
& (p_{k})_{[0,N_{k}]}0^{j}10^{|A_{n_{k}+j}|-(j+1)}\dots ,\cdots , \\
& (p_{k})_{[0,N_{k}]}0^{\infty }\}.
\end{align*}%
One can check that $d_{H}(\{p_{k}\},Q_{k})<\varepsilon $ and that for each $%
i>N_{k}$, there is a $q\in Q_{k}$ such that $(p_{k})_{i}\neq q_{i}$. Thus
\begin{equation*}
\limsup_{n\rightarrow \infty }\frac{1}{n}\sum_{i=0}^{n-1}d_{H}(\sigma
_{K}^{i}\{p_{k}\},\sigma _{K}^{i}Q_{k})=\limsup_{n\rightarrow \infty }\frac{1%
}{n}\sum_{i=N_{k}+1}^{n-1}1=1.
\end{equation*}

Let $Q:=\cup _{k=1}^{m}Q_{k}$ and $N:=\max_{1\leq k\leq m}N_{k}$. Clearly $%
d_{H}(P,Q)<\varepsilon $ and hence $Q\in \mathcal{U}$. By Lemma \ref%
{lem:Density-2} we have that $\overline{D}(E_{p_{k}})=0$ for each $k=1,\dots
,m$. This implies that if
\begin{equation*}
G=\{i\in \mathbb{N}\colon (p_{k})_{i}=0\mbox{ for all }k=1,\dots ,m\}
\end{equation*}%
then $\overline{D}(G)=1$. Note that for each $i\in G\cap (N,+\infty )$ we
can choose a $q^{\prime }\in Q$ with $q_{i}^{\prime }=1$. This implies that $%
d_{H}(\sigma _{K}^{i}P,\sigma _{K}^{i}Q)=1$, and hence we conclude that
\begin{align*}
\limsup_{n\rightarrow \infty }\frac{1}{n}\sum_{i=0}^{n-1}d_{H}(\sigma
_{K}^{i}P,\sigma _{K}^{i}Q)& \geq \limsup_{n\rightarrow \infty }\frac{1}{n}%
\sum_{i=N+1}^{n-1}d_{H}(\sigma _{K}^{i}P,\sigma _{K}^{i}Q) \\
& \geq \limsup_{n\rightarrow \infty }\frac{1}{n}\sum_{i\in \lbrack
N+1,n-1]\cap G}1 \\
& =\overline{D}(G)=1.
\end{align*}%
This shows $K(X)$ is mean sensitive.

It remains to show that the topological entropy of $K(X)$ is positive. By
the construction it is not hard to see that $(X,\sigma )$ contains a
subsystem $(Y,\sigma )$, which is the collection of points that contain at
most one $1$. That is, $Y=\{x^{i}\in \Sigma _{2}^{+}\colon i\in {\mathbb{Z}}%
_{+}\}$ with the shift map, where $x^{i}=x_{1}^{i}x_{2}^{i}\ldots
x_{j}^{i}\ldots $, $i\geq 0,j\geq 1$ with $x_{j}^{i}=1$ if $j=i,$ and $%
x_{j}^{i}=0$ if $j\neq i$. Since $(K(Y),\sigma _{K})$ has positive
topological entropy (see for instance \cite[Theorem 13]{KO07}), and $%
K(Y)\subset K(X),$ we conclude $(K(X),\sigma _{K})$ has positive topological
entropy.
\end{proof}

On the other hand we can not construct such counter-example for weakly mixing
or diam-mean sensitive systems (see Theorem \ref%
{thm:KK(X)-Mean-Sen=Diam-Mean-Sen}).

\medskip A question in a similar spirit is the following: Can a t.d.s. with
zero topological entropy have a hyperspace with uniform positive entropy
(u.p.e.) of order $n$ ($n\geq 2$)?

Let $n\geq 2$. We say $(x_{1},\ldots ,x_{n})\in X^{n}$ is an \textit{$n$%
-entropy tuple} if $\{x_{1},\ldots ,x_{n}\}$ is not a singleton and for each
pairwise disjoint closed neighbourhoods $V_{i}$ of $x_{i}$, $(X,T)$ has
positive topological entropy with respect to the open cover $\mathcal{U}%
=\{X\setminus V_{i}\colon 1\leq i\leq n\}$. We say that a t.d.s. $(X,T)$ has
\textit{uniform positive entropy (or is u.p.e.) of order $n$} if each $%
(x_{1},\ldots ,x_{n})\in X^{n}\setminus \Delta _{n}$ is an entropy tuple.

The answer to the question is no. It was shown in \cite[Theorem 8.4]{HY06}
that for any $n\geq 2$, $X$ is u.p.e. of order $n$ if and only if $K(X)$ is
u.p.e. of order $n$. We will generalize one of the implications of this
result in Theorem \ref{thm:K(X)-entrPair} by showing how we can find locally
the entropy pairs.

\begin{de}
\label{def:independence-set} Let $(X,T)$ be a t.d.s. and $\tilde{A}%
=(A_1,\ldots,A_k)$ be a tuple of subsets of $X$. We say that a subset $%
J\subseteq{\mathbb{Z}}_+$ is an independence set for $\tilde{A}$ (or that $%
\tilde{A}$ has the independence set $J$) if for any non-empty finite subset $%
I\subseteq J$, we have
\begin{equation*}
\bigcap_{i\in I}T^{-i}A_{s(i)}\not=\emptyset
\end{equation*}
for any $s\in \{1,\ldots,k\}^I$.
\end{de}

The following characterizations appeared in \cite{KL07}.

\begin{lem}
\label{lem:entropy-pair-Ind-Set} Let $(X,T)$ be a t.d.s. and $U_1,U_2$ be
pairwise disjoint subsets. Then

\begin{enumerate}
\item \label{lem:entropy-pair-Ind-Set:1} A pair $(x_1,x_2)\in X^2\setminus
\Delta_2$ is an entropy pair if and only if there is an independence set $S$
with positive density for $(U, V)$, where $U,V$ are any non-empty
neighbourhoods of $x_1,x_2$ respectively.

\item \label{lem:entropy-pair-Ind-Set:2} A pair $(U_1,U_2)$ has an
independence set with positive density if and only if $h_{\emph{top}%
}(T,\{U_{1}^{c},U_{2}^{c}\})>0$.

\item \label{lem:entropy-pair-Ind-Set:3} $h_{\emph{top}}(T,%
\{U_{1}^{c},U_{2}^{c}\})>0$ if and only if there is an entropy pair $%
(x_1,x_2)\in U_1\times U_2$.
\end{enumerate}
\end{lem}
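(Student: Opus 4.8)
The plan is to obtain \eqref{lem:entropy-pair-Ind-Set:1} by feeding \eqref{lem:entropy-pair-Ind-Set:2} into the definition of an entropy pair, so the real content lies in \eqref{lem:entropy-pair-Ind-Set:2} and \eqref{lem:entropy-pair-Ind-Set:3}. Recall that $(x_1,x_2)\in X^2\setminus\Delta_2$ is an entropy pair exactly when $h_{\text{top}}(T,\{V_1^c,V_2^c\})>0$ for every pair of disjoint closed neighbourhoods $V_1\ni x_1$, $V_2\ni x_2$. By \eqref{lem:entropy-pair-Ind-Set:2} this is the same as saying that $(V_1,V_2)$ has a positive-density independence set for all such $V_1,V_2$, which is \eqref{lem:entropy-pair-Ind-Set:1} once one records a few routine monotonicity remarks: every open neighbourhood contains a disjoint closed pair, a subset of an independence set is an independence set, and both $h_{\text{top}}(T,\{U^c,V^c\})$ and the independence density of $(U,V)$ can only decrease when $U,V$ shrink. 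So I would first dispose of these observations and reduce to \eqref{lem:entropy-pair-Ind-Set:2} and \eqref{lem:entropy-pair-Ind-Set:3}.

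For \eqref{lem:entropy-pair-Ind-Set:2}, set $\mathcal U=\{U_1^c,U_2^c\}$ (we may assume $U_1\cap U_2=\emptyset$, so this is a cover of $X$). The easy direction is that a positive-density independence set $J$ for $(U_1,U_2)$ forces $h_{\text{top}}(T,\mathcal U)>0$: given any subcover of $\bigvee_{i=0}^{n-1}T^{-i}\mathcal U$ and any sign pattern $s\in\{1,2\}^{J\cap[0,n-1]}$, a point realizing $s$ lies in some atom $\bigcap_iT^{-i}U_{\tau(i)}^c$ of the subcover, and on $J\cap[0,n-1]$ the pattern $\tau$ must be the complement of $s$; distinct $s$ thus force distinct subcover atoms, so the subcover has at least $2^{|J\cap[0,n-1]|}$ elements and $h_{\text{top}}(T,\mathcal U)\ge\overline{D}(J)\log 2>0$. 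For the converse one has to go from positive cover entropy to a positive-density independence set: the input is that the collection of realized itinerary patterns over a window $[0,n-1]$ grows exponentially (this is where cover entropy enters), and one applies the Sauer--Shelah / Karpovsky--Milman inequality — if a subset of $\{1,2\}^{[0,n-1]}$ has more than $\sum_{j<k}\binom{n}{j}$ elements it shatters some $J$ of size $k$ — so that exponential growth yields shattered, hence independence, sets of length $\ge cn$ for a fixed $c>0$ inside every window; these finite pieces are then assembled, using heredity of the independence property together with shift-invariance, into a single independence set of positive density.

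For \eqref{lem:entropy-pair-Ind-Set:3}, the equivalence of positive entropy of $\{U_1^c,U_2^c\}$ with the existence of an entropy pair in $U_1\times U_2$ is the classical localization of entropy pairs: the ``only if'' is a compactness argument — if no entropy pair sat in $\overline{U_1}\times\overline{U_2}$, cover this compact set by finitely many products of disjoint closed neighbourhoods whose associated covers have zero entropy, and apply the standard finite-union and monotonicity estimates for $h_{\text{top}}(T,\cdot)$ to contradict $h_{\text{top}}(T,\{U_1^c,U_2^c\})>0$ — while the ``if'' is immediate from the definition of an entropy pair. I expect the genuine difficulty of the whole lemma to be the converse half of \eqref{lem:entropy-pair-Ind-Set:2}: both the bookkeeping that translates subcover cardinalities of the join into sizes of pattern sets (the cover elements and their complements enter asymmetrically) and, above all, the upgrade from arbitrarily long windowed independence sets to a single independence set of genuinely positive density. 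The Sauer--Shelah extraction itself, the easy direction of \eqref{lem:entropy-pair-Ind-Set:2}, and the entropy-pair localization in \eqref{lem:entropy-pair-Ind-Set:3} are routine; the density upgrade is the technical core, and I would carry it out following the independence machinery of Kerr--Li \cite{KL07}.
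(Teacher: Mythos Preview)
Your sketch is correct and well-organized, but note that the paper does not actually prove this lemma at all: it is stated with the preamble ``The following characterizations appeared in \cite{KL07}'' and no proof is given. So there is nothing to compare your argument against beyond the citation itself. Your outline is in fact a faithful summary of how the Kerr--Li machinery establishes these facts (the Sauer--Shelah extraction for the hard direction of \eqref{lem:entropy-pair-Ind-Set:2}, the compactness localization for \eqref{lem:entropy-pair-Ind-Set:3}), and you correctly identify the density upgrade in \eqref{lem:entropy-pair-Ind-Set:2} as the technical heart and defer it to \cite{KL07} --- which is exactly what the paper does for the entire lemma.
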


\begin{thm}
\label{thm:K(X)-entrPair} Let $(X,T)$ be a t.d.s. and $A_{1},A_{2}\in K(X)$
with $A_{1}\cap A_{2}=\emptyset $. If $(A_{1},A_{2})$ is an entropy pair of $%
(K(X),\sigma _{K})$ then there exists an entropy pair, $(x_{1},x_{2})\in
A_{1}\times A_{2},$ of $(X,T)$.
\end{thm}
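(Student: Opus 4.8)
The plan is to combine the independence-set description of entropy pairs (Lemma~\ref{lem:entropy-pair-Ind-Set}\eqref{lem:entropy-pair-Ind-Set:1}) with a passage through the ``membership'' extension of $(K(X),T_K)$. Since $A_1\cap A_2=\emptyset$ and $X$ is compact, $c:=\tfrac13 d(A_1,A_2)>0$, and in particular any $x_1\in A_1$ and $x_2\in A_2$ satisfy $x_1\ne x_2$. By Lemma~\ref{lem:entropy-pair-Ind-Set}\eqref{lem:entropy-pair-Ind-Set:1} (and the trivial fact that an independence set for a pair of sets is again one for any componentwise larger pair), it is enough to produce $x_1\in A_1$ and $x_2\in A_2$ such that $(B(x_1,\varepsilon),B(x_2,\varepsilon))$ has an independence set of positive density for every $\varepsilon>0$. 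A routine diagonal argument reduces this to the following scale-$\varepsilon$ claim: \emph{for each $\varepsilon\in(0,c)$ there are $a\in A_1$ and $b\in A_2$ with $(B(a,\varepsilon),B(b,\varepsilon))$ having an independence set of positive density in $(X,T)$.} Indeed, taking $\varepsilon=1/k$ and a subsequence along which the resulting pairs converge to some $(x_1,x_2)\in A_1\times A_2$, for any fixed $\delta>0$ the ball $B(a^{1/k},1/k)$ is eventually contained in $B(x_1,\delta)$, and similarly for $x_2$, so $(B(x_1,\delta),B(x_2,\delta))$ inherits the positive-density independence set; since $\delta$ is arbitrary and $x_1\ne x_2$, Lemma~\ref{lem:entropy-pair-Ind-Set}\eqref{lem:entropy-pair-Ind-Set:1} then gives that $(x_1,x_2)\in A_1\times A_2$ is an entropy pair of $(X,T)$.

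To prove the scale-$\varepsilon$ claim, fix $\varepsilon\in(0,c)$, choose finite $\varepsilon$-nets $F_1\subseteq A_1$ and $F_2\subseteq A_2$, and form the Vietoris neighbourhoods $\mathcal A_i=\langle\,B(z,\varepsilon):z\in F_i\,\rangle$ of $A_i$ in $K(X)$. These are disjoint, because every element of $\mathcal A_i$ is contained in $B(A_i,\varepsilon)$ while $d(A_1,A_2)=3c>2\varepsilon$. As $(A_1,A_2)$ is an entropy pair of $(K(X),T_K)$, Lemma~\ref{lem:entropy-pair-Ind-Set}\eqref{lem:entropy-pair-Ind-Set:1} yields an independence set $J$ of positive density for $(\mathcal A_1,\mathcal A_2)$.

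Now introduce the membership relation $Z=\{(x,C)\in X\times K(X):x\in C\}$, a closed $(T\times T_K)$-invariant set, hence a t.d.s.\ $(Z,S)$ with $S=(T\times T_K)|_Z$, whose coordinate projections $\rho\colon Z\to X$ and $\pi\colon Z\to K(X)$ are both factor maps. Since membership in $\pi^{-1}(\mathcal A_i)$ depends only on the $K(X)$-coordinate, the same $J$ is an independence set of positive density for $(\pi^{-1}\mathcal A_1,\pi^{-1}\mathcal A_2)$ in $Z$: given a finite $I\subseteq J$ and $\sigma\in\{1,2\}^I$, pick $C$ with $T_K^iC\in\mathcal A_{\sigma(i)}$ for all $i\in I$ and any $c\in C$; then $(c,C)\in\bigcap_{i\in I}S^{-i}\pi^{-1}(\mathcal A_{\sigma(i)})$. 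The gain from passing to $Z$ is that $\pi^{-1}(\mathcal A_1)$ is now a \emph{finite} union, $\pi^{-1}(\mathcal A_1)=\bigcup_{a\in F_1}\bigl(\pi^{-1}(\mathcal A_1)\cap(B(a,\varepsilon)\times K(X))\bigr)$, because every $(x,C)\in\pi^{-1}(\mathcal A_1)$ has $x\in C\subseteq\bigcup_{a\in F_1}B(a,\varepsilon)$; likewise $\pi^{-1}(\mathcal A_2)=\bigcup_{b\in F_2}\bigl(\pi^{-1}(\mathcal A_2)\cap(B(b,\varepsilon)\times K(X))\bigr)$. Invoking the Sauer--Shelah type stability of positive-density independence sets under such finite refinements (Kerr--Li \cite{KL07}), applied in each coordinate in turn, we obtain $a\in F_1$, $b\in F_2$ and a positive-density $J'\subseteq J$ which is an independence set for $\bigl(\pi^{-1}(\mathcal A_1)\cap(B(a,\varepsilon)\times K(X)),\ \pi^{-1}(\mathcal A_2)\cap(B(b,\varepsilon)\times K(X))\bigr)$ in $Z$. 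Pushing forward by $\rho$: any point realising a pattern $\sigma$ over $J'$ has $X$-coordinate $z$ with $T^iz\in B(a,\varepsilon)$ when $\sigma(i)=1$ and $T^iz\in B(b,\varepsilon)$ when $\sigma(i)=2$, so $J'$ is a positive-density independence set for $(B(a,\varepsilon),B(b,\varepsilon))$ in $(X,T)$, with $a\in A_1$ and $b\in A_2$. This is the scale-$\varepsilon$ claim.

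I expect the genuine difficulty to be exactly this descent from $K(X)$ to $X$: positive entropy on $K(X)$ does not descend in general (cf.\ the Glasner--Weiss example quoted above), and what rescues the argument is the disjointness $A_1\cap A_2=\emptyset$, which both forces the approximating neighbourhoods $\mathcal A_i$ to be disjoint and concentrated near $A_i$ (so that in the membership extension their preimages split into finitely many pieces, each confining the $X$-coordinate to a small ball) and guarantees that the limit points are distinct, so that Lemma~\ref{lem:entropy-pair-Ind-Set}\eqref{lem:entropy-pair-Ind-Set:1} applies. The only external input beyond the excerpt is the combinatorial (Sauer--Shelah) stability lemma of \cite{KL07}; alternatively, this step may be replaced by the abstract fact that entropy pairs lift along factor maps (see \cite{HY06}): $(A_1,A_2)$ then lifts to an entropy pair of $(Z,S)$ lying over $(A_1,A_2)$, whose image under $\rho$ is an entropy pair of $(X,T)$ in $A_1\times A_2$, since its two coordinates are distinct and factor maps send non-diagonal entropy pairs to entropy pairs.
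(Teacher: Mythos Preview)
Your argument is correct, but the paper takes a considerably more direct route that avoids both the membership extension $Z$ and the Sauer--Shelah/Kerr--Li refinement. The key simplification is in the choice of Vietoris neighbourhoods: instead of your multi-ball neighbourhoods $\mathcal A_i=\langle B(z,\varepsilon):z\in F_i\rangle$, the paper uses the \emph{single-set} Vietoris neighbourhoods $\langle U_i^n\rangle$ where $U_i^n=B(A_i,1/n)$. The point is that $C\in\langle U_i^n\rangle$ means precisely $C\subset U_i^n$, so if $J$ is an independence set for $(\langle U_1^n\rangle,\langle U_2^n\rangle)$ in $K(X)$ and $A$ realises a pattern $\sigma$ over a finite $I\subset J$, then $T^kA\subset U_{\sigma(k)}^n$ for every $k\in I$, and hence \emph{any} point $a\in A$ satisfies $T^ka\in U_{\sigma(k)}^n$. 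Thus $J$ is immediately an independence set for $(U_1^n,U_2^n)$ in $(X,T)$ --- no refinement needed. From there Lemma~\ref{lem:entropy-pair-Ind-Set}\eqref{lem:entropy-pair-Ind-Set:2}--\eqref{lem:entropy-pair-Ind-Set:3} produce an entropy pair in $U_1^n\times U_2^n$, and a limit along $n\to\infty$ (using closedness of the set of entropy pairs off the diagonal) lands in $A_1\times A_2$.

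Your detour arises because the multi-ball Vietoris sets carry the extra ``intersects every ball'' constraint, which is irrelevant for the descent and forces you to localise the $X$-coordinate via the membership extension and a combinatorial lemma. What your approach buys is perhaps a more conceptual explanation of why disjointness matters (it is what makes the finite covering of $\pi^{-1}(\mathcal A_i)$ informative), and the alternative you sketch at the end --- lifting entropy pairs along the factor map $\pi\colon Z\to K(X)$ and projecting via $\rho$ --- is also a legitimate route. But for this particular statement the paper's trick of using $\langle U\rangle$ with a single $U$ makes the passage from $K(X)$ to $X$ essentially immediate.
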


\begin{proof}
Let $n\in \mathbb{N}$. We define $U_{1}^{n}:=\cup _{a_{1}\in
A_{1}}B(a_{1},1/n)$ and $U_{2}^{n}:=\cup _{a_{2}\in A_{2}}B(a_{2},1/n)$.
Without loss of generality we can assume that $U_{1}^{1}\cap
U_{2}^{1}=\emptyset $. Since $(A_{1},A_{2})$ is an entropy pair, by Lemma %
\ref{lem:entropy-pair-Ind-Set}\eqref{lem:entropy-pair-Ind-Set:1} for each $%
n\in \mathbb{N}$, there is an independence set $S=S(n)$ with positive
density for $(\langle U_{1}^{n}\rangle ,\langle U_{2}^{n}\rangle )$. This
implies that for any finite subset $F\subset S$,
\begin{equation*}
\bigcap_{k\in F}T_{K}^{-k}\langle U_{i_{k}}^{n}\rangle \neq \emptyset
\end{equation*}%
for any $i_{k}\in \{1,2\}^{F}$. Let $A\in \bigcap_{k\in F}T_{K}^{-k}\langle
U_{i_{k}}^{n}\rangle $. We have that $T^{k}A\subset U_{i_{k}}^{n}$ for any $%
k\in F$, i.e. $S$ is also an independence set with positive density for $%
(U_{1}^{n},U_{2}^{n})$. By Lemma \ref{lem:entropy-pair-Ind-Set}%
\eqref{lem:entropy-pair-Ind-Set:2} we have that $h_{\text{top}%
}(T,\{(U_{1}^{n})^{c},(U_{2}^{n})^{c})\})>0$. Then using Lemma \ref%
{lem:entropy-pair-Ind-Set}\eqref{lem:entropy-pair-Ind-Set:3} there is an
entropy pair $(x_{1}^{n},x_{2}^{n})\in U_{1}^{n}\times U_{2}^{n}$. Assume
that $(x_{1}^{n},x_{2}^{n})\rightarrow (x_{1},x_{2})$ when $n\to\infty$
(passing to subsequence if necessary). It is easy to check that $%
(x_{1},x_{2})\in A_{1}\times A_{2}$ is an entropy pair.
\end{proof}

It is a natural question if every t.d.s. with u.p.e. of order $n$ $(n\in\N)$ is mean sensitive. This question will be answered negatively in a forthcoming paper of the second author by showing that there exists an almost mean equicontinuous t.d.s. which has u.p.e. of all orders \cite{Li17}.

\subsection{Relationships on hyperspace}

The notions of diam-mean equicontinuity and mean equicontinuity are
different in general. Actually in section \ref{sect:Proof-Of-Theorem 1.3} we
provided a transitive mean equicontinuous t.d.s. which has no diam-mean
equicontinuous points. Nonetheless on hyperspaces the notions coincide.

\begin{thm}
\label{thm:K(X)-Almo-MeanEqui=Almo-DiamMeanEqui} Let $(X,T)$ be a t.d.s. and
$A\in K(X)$. Then $A$ is diam-mean equicontinuous if and only if it is mean
equicontinuous.
\end{thm}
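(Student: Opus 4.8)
The \emph{only if} direction is immediate and carries no content: by the triangle‑inequality reformulation in Remark~\ref{rem:equi}\eqref{rem:equi:2}, applied to the t.d.s.\ $(K(X),T_K)$, a diam‑mean equicontinuous point of $K(X)$ satisfies the stronger estimate with the supremum \emph{inside} the upper average, and this dominates the mean‑equicontinuity estimate with the supremum outside. So I would spend all the effort on the converse.

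Assume $A$ is a mean equicontinuous point of $(K(X),T_K)$, fix $\varepsilon>0$, and let $\delta>0$ be provided by mean‑equicontinuity at $A$ for this $\varepsilon$ (I may freely pass to the density reformulation of Remark~\ref{rem:equi}\eqref{rem:equi:3} where convenient). The plan is to show that the metric ball $\mathcal N:=\{C\in K(X):d_H(A,C)<\delta/4\}$ witnesses diam‑mean equicontinuity at $A$, up to replacing $\varepsilon$ by a fixed multiple of itself (which is harmless since $\varepsilon$ is arbitrary). From $d_H(T^iB_1,T^iB_2)\le d_H(T^iB_1,T^iA)+d_H(T^iA,T^iB_2)$ for $B_1,B_2\in\mathcal N$ one gets $\diam(T_K^i\mathcal N)\le 2\sup_{B\in\mathcal N}d_H(T_K^iA,T_K^iB)$, so it suffices to bound $\limsup_n\frac1n\sum_{i=0}^{n-1}\sup_{B\in\mathcal N}d_H(T^iA,T^iB)$. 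Split $d_H(T^iA,T^iB)=\max\{e(T^iB,T^iA),\,e(T^iA,T^iB)\}$ into the two one‑sided Hausdorff distances $e(S,S'):=\sup_{x\in S}d(x,S')$. The first piece is where the hyperspace structure is essential: every $B\in\mathcal N$ is contained in the \emph{single} closed set $A^{+}:=\overline{N_{\delta/2}(A)}$, and $d_H(A,A^{+})\le\delta/2<\delta$; hence $e(T^iB,T^iA)\le e(T^iA^{+},T^iA)\le d_H(T_K^iA^{+},T_K^iA)$, and mean‑equicontinuity at $A$ applied to the one test set $A^{+}$ controls $\limsup_n\frac1n\sum_i e(T^iB,T^iA)$ uniformly in $B\in\mathcal N$. (Conceptually: a union of sets each within $\delta$ of $A$ is again within $\delta$ of $A$, so the "outer" spreading of a neighbourhood of $A$ is governed by a single test set — this is exactly the feature special to hyperspaces.)

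The remaining piece $\sup_{B\in\mathcal N}e(T^iA,T^iB)=\sup_{a\in A}\sup_{B\in\mathcal N}d(T^ia,T^iB)$ is the crux, and here one cannot collapse the supremum over $B$ to a single test set, because there is no smallest $\delta$‑dense subset of $A$. Note $\sup_{B\in\mathcal N}e(T^iA,T^iB)\le\rho_i(\delta):=\sup\{d(T^ia,T^ix):a\in A,\ d(a,x)<\delta\}$ (any $B\in\mathcal N$ contains a point within $\delta$ of each $a\in A$), so the whole problem reduces to showing that the "local mean modulus of continuity" $\limsup_n\frac1n\sum_i\rho_i(\delta)$ tends to $0$ as $\delta\to0$; equivalently, writing $\rho_i(\delta)$ as the near‑diagonal width of $(T\times T)^i(\widehat A_\delta)$ for $\widehat A_\delta=\{(x,y):x\in A,\ d(x,y)\le\delta\}\in K(X\times X)$, that the $(T\times T)_K$‑orbit of $\widehat A_\delta$ stays mean‑narrow near the diagonal. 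My route for this is: fix a finite $(\delta/8)$‑net $\{a_1,\dots,a_M\}\subseteq A$ (compactness); for each $a\in A$, $x$ with $d(a,x)<\delta$, build a genuine test set $B$ within $\delta$ of $A$ by perturbing the net — replacing the net point nearest $a$ by $x$ and leaving the others fixed (and, when the dynamics at time $i$ folds some of the other net points near $T^ia$, removing those and re‑attaching nearby points, checking that the modified set stays $\delta$‑dense in $A$) — so that $e(T^iA,T^iB)$ dominates $d(T^ia,T^ix)$ up to the contribution of the finitely many net points; then invoke mean‑equicontinuity at $A$ for this (compact, finitely parametrised) family of test sets, together with the propagation $d_H(T^iC_1,T^iC_2)\le d_H(T^iC_1,T^iA)+d_H(T^iA,T^iC_2)$ of mean‑equicontinuity from $A$ to all sets within $\delta$ of $A$, to absorb that contribution. \textbf{The main obstacle is precisely this last step}: exchanging the supremum over test sets $B$ with the Cesàro average, for which the two one‑sided Hausdorff distances behave quite differently and the naive triangle bounds turn circular; making the exchange rigorous — by a careful covering argument that exploits compactness of $A$ and the closure of "being within $\delta$ of $A$" under finite unions — is the heart of the argument, and once it is done, letting $\varepsilon\to0$ completes the proof.
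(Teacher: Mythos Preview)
Your treatment of the easy direction and of the first one-sided piece $e(T^iB,T^iA)$ (via the single test set $A^{+}$) is correct and in the spirit of the paper. The genuine gap is exactly where you locate it: you never actually bound $\sup_{B\in\mathcal N}e(T^iA,T^iB)$ in Ces\`aro mean, and your finite-net perturbation sketch does not close --- the ``circular'' obstruction you flag is real, because knowing that $\limsup_n\frac1n\sum_i d_H(T^iA,T^iB)$ is small for each individual $B$ gives no control once you take the supremum over $B$ \emph{inside} the sum, and your proposed re-attachment of net points depends on $i$ in a way that produces a different test set at each time.

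The paper sidesteps this entirely by a different choice of neighbourhood. Instead of the metric ball $\mathcal N$, it takes a Vietoris basic open set
\[
\mathcal V=\langle B(x_1,\delta'),\dots,B(x_m,\delta')\rangle,
\]
where $x_1,\dots,x_m$ form a $\delta'$-net in $A$ with $\delta'<\delta/2$, and introduces the two fixed elements $E=\{x_1,\dots,x_m\}$ and $F=\bigcup_j\overline{B(x_j,\delta')}$, both lying in $B_{d_H}(A,\delta)$. The structural feature of a Vietoris neighbourhood that the metric ball lacks is two-sided: every $B\in\mathcal V$ satisfies $B\subset F$ \emph{and} meets each $\overline{B(x_j,\delta')}$, so $B$ is simultaneously bounded above by $F$ and ``pinned from below'' by the finite skeleton $E$. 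The paper then argues that $\diam(T_K^{\,i}\mathcal V)\le 2\,d_H(T_K^{\,i}E,T_K^{\,i}F)$, which reduces the whole question to the \emph{single} pair $(E,F)$; mean equicontinuity at $A$ applied to that one pair finishes the proof. Your $A^{+}$ is essentially the paper's $F$; what you are missing is the companion set $E$ and the Vietoris framing that makes it serve as a lower bound for every $B$ in the neighbourhood. With that in hand there is no supremum over $B$ left to exchange with the average.
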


\begin{proof}
By definition every diam-mean equicontinuous point in $%
K(X) $ is mean equicontinuous.

Now assume that $A\in K(X)$ is mean equicontinuous. By Remark \ref{rem:equi}\eqref{rem:equi:2} for every $\varepsilon
>0 $ there is a $\delta >0$
such that
for any $B_{1},B_{2}\in B_{d_{H}}(A,\delta )$
we have that
$$
\limsup_{n\rightarrow \infty } \frac{1}{n}
\sum_{i=0}^{n-1}d_{H}(T_{K}^{i}B_{1},T_{K}^{i}B_{2}) <\frac{\varepsilon }{2}.
$$
Now we choose suitable $0<\delta ^{\prime }<\delta /2$ and an open subcover $%
\{B_{d}(x_{1},\delta ^{\prime }),\dots ,B_{d}(x_{m},\delta ^{\prime })\}$ of
$A$ such that
\begin{equation*}
\mathcal{V}=\langle B_{d}(x_{1},\delta ^{\prime }),\dots ,B_{d}(x_{m},\delta
^{\prime })\rangle \subset \langle \overline{B_{d}(x_{1},\delta ^{\prime })}%
,\dots ,\overline{B_{d}(x_{m},\delta ^{\prime })}\rangle \subset
B_{d_{H}}(A,\delta ).
\end{equation*}%
For simplicity we denote $\mathcal{W}=\langle \overline{B_{d}(x_{1},\delta
^{\prime })},\dots ,\overline{B_{d}(x_{m},\delta ^{\prime })}\rangle $. Now
set $E=\{x_{1},x_{2},\dots ,x_{m}\}$ and $F=\cup _{i=1}^{m}\overline{%
B_{d}(x_{i},\delta ^{\prime })}$. Clearly we have $E,\ F\in
B_{d_{H}}(A,\delta )$. Note that
\begin{align*}
\diam(T_{K}^{i}{\mathcal{V}})& \leq \sup_{B_{1}\in {\mathcal{W}}%
}d_{H}(T_{K}^{i}B_{1},T_{K}^{i}E)+\sup_{B_{2}\in {\mathcal{W}}%
}d_{H}(T_{K}^{i}E,T_{K}^{i}B_{2}) \\
& \leq
d_{H}(T_{K}^{i}F,T_{K}^{i}E)+d_{H}(T_{K}^{i}E,T_{K}^{i}F)=2d_{H}(T_{K}^{i}E,T_{K}^{i}F)
\end{align*}%
for all $i\in {\mathbb{Z}}_{+}.$ Thus
\begin{align*}
\limsup_{n\rightarrow \infty }\frac{1}{n}\sum_{i=0}^{n-1}\diam(T_{K}^{i}%
\mathcal{V})& \leq \limsup_{n\rightarrow \infty }\frac{1}{n}\sum_{i=0}^{n-1}%
\diam(T_{K}^{i}{\mathcal{W}}) \\
& \leq 2\limsup_{n\rightarrow \infty }\frac{1}{n}%
\sum_{i=0}^{n-1}d_{H}(T_{K}^{i}E,T_{K}^{i}F)<\varepsilon .
\end{align*}%
This implies that $A$ is a diam-mean equicontinuous point.
\end{proof}

As a consequence of Theorem \ref{thm:K(X)-Almo-MeanEqui=Almo-DiamMeanEqui}
we have the following corollary.

\begin{cor}
\label{cor:K(X)-Diam-MeanEqui=MeanEqui} Let $(X,T)$ be a t.d.s. Then $K(X)$
is diam-mean equicontinuous if and only if $K(X)$ is mean equicontinuous.
\end{cor}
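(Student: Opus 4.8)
The plan is to deduce this directly from Theorem \ref{thm:K(X)-Almo-MeanEqui=Almo-DiamMeanEqui} together with the definitions of mean equicontinuous and diam-mean equicontinuous \emph{systems}. Recall that, by definition, a t.d.s. is mean (resp. diam-mean) equicontinuous exactly when every one of its points is a mean (resp. diam-mean) equicontinuous point. Applying this to the induced t.d.s. $(K(X),T_{K})$, the statement ``$K(X)$ is mean equicontinuous'' is equivalent to ``every $A\in K(X)$ is a mean equicontinuous point of $(K(X),T_{K})$'', and similarly ``$K(X)$ is diam-mean equicontinuous'' is equivalent to ``every $A\in K(X)$ is a diam-mean equicontinuous point of $(K(X),T_{K})$''.

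With this reformulation the proof is immediate. The forward implication needs no input beyond Remark \ref{rem:equi}\eqref{rem:equi:2}: since $\sup\limsup\le\limsup\sup$, every diam-mean equicontinuous point of $K(X)$ is automatically a mean equicontinuous point, so if $K(X)$ is diam-mean equicontinuous it is mean equicontinuous. For the converse, suppose $K(X)$ is mean equicontinuous. Then every $A\in K(X)$ is a mean equicontinuous point of $(K(X),T_{K})$, and Theorem \ref{thm:K(X)-Almo-MeanEqui=Almo-DiamMeanEqui} (applied in the t.d.s. $(K(X),T_{K})$, with $K(X)$ playing the role of ``$X$'') tells us that each such $A$ is then a diam-mean equicontinuous point. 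Quantifying over all $A\in K(X)$ shows that $(K(X),T_{K})$ is diam-mean equicontinuous, which is what we wanted.

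I do not expect any real obstacle here: the whole content of Corollary \ref{cor:K(X)-Diam-MeanEqui=MeanEqui} is the pointwise Theorem \ref{thm:K(X)-Almo-MeanEqui=Almo-DiamMeanEqui}, and the passage from ``every point'' to ``the system'' is exactly the definition. The only point worth a word of care is that Theorem \ref{thm:K(X)-Almo-MeanEqui=Almo-DiamMeanEqui} already provides \emph{both} implications at the level of an individual point, so no additional compactness or uniformity argument (such as the one mentioned in Remark \ref{rem:equi}\eqref{rem:equi:1}) is needed to conclude. An essentially equivalent route would be to argue via the ``almost'' versions and the residuality of the equicontinuity points, but the pointwise argument above is the shortest.
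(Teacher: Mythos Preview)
Your proposal is correct and matches the paper's approach: the paper states the corollary as an immediate consequence of Theorem \ref{thm:K(X)-Almo-MeanEqui=Almo-DiamMeanEqui} without further proof, and your argument simply spells out why --- applying the pointwise equivalence to every $A\in K(X)$ and invoking the definition of $Q$-equicontinuous systems.
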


Also we know that in general diam-mean sensitivity and mean sensitivity are
different concepts (e.g. see Theorem \ref{thm:Diam-Mean-Sen not Mean-Sen}).
We do not know if in general a hyperspace is mean sensitive if and only if
it is diam-mean sensitive. Nonetheless when combining \cite[Theorem 5.4]%
{LTY13}, \cite[Theorem 2]{Ban05} with Theorem \ref%
{thm:K(X)-Almo-MeanEqui=Almo-DiamMeanEqui} we obtain the following.

\begin{cor}
\label{cor:K(X)-Diam-MeanSen=MeanSen} \label{cor:WM} Let $(X,T)$ be a weakly
mixing t.d.s. Then $(K(X),T_{K})$ is diam-mean sensitive if and only if $%
(K(X),T_{K})$ is mean sensitive.
\end{cor}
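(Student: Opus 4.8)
The plan is to deduce the equivalence from the dichotomy theorems recalled in Section~\ref{sect:preliminaries} together with the pointwise coincidence of mean and diam-mean equicontinuity on hyperspaces (Theorem~\ref{thm:K(X)-Almo-MeanEqui=Almo-DiamMeanEqui}). The first step is to observe that $(K(X),T_{K})$ is transitive: since $(X,T)$ is weakly mixing, \cite[Theorem 2]{Ban05} yields that $(K(X),T_{K})$ is (weakly mixing, hence) transitive, so the dichotomies ``transitive implies $Q$-sensitive or almost $Q$-equicontinuous'' (for $Q=$ mean and $Q=$ diam-mean) are available for $(K(X),T_{K})$.

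One implication is immediate from the definitions and does not even use weak mixing. If $\delta>0$ witnesses mean sensitivity of $(K(X),T_{K})$, then for every non-empty open $\mathcal{U}\subset K(X)$ we may choose $\mathcal{A},\mathcal{B}\in\mathcal{U}$ with $\limsup_{n\to\infty}\frac{1}{n}\sum_{i=0}^{n-1}d_{H}(T_{K}^{i}\mathcal{A},T_{K}^{i}\mathcal{B})>\delta$, and since $\diam(T_{K}^{i}\mathcal{U})\ge d_{H}(T_{K}^{i}\mathcal{A},T_{K}^{i}\mathcal{B})$ we get $\limsup_{n\to\infty}\frac{1}{n}\sum_{i=0}^{n-1}\diam(T_{K}^{i}\mathcal{U})>\delta$; that is, $(K(X),T_{K})$ is diam-mean sensitive.

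For the converse, suppose $(K(X),T_{K})$ is diam-mean sensitive. Applying the diam-mean dichotomy (\cite[Theorem 5.4]{LTY13}) to the transitive system $(K(X),T_{K})$, it is not almost diam-mean equicontinuous, i.e.\ its set of diam-mean equicontinuous points is not residual. By Theorem~\ref{thm:K(X)-Almo-MeanEqui=Almo-DiamMeanEqui} this set coincides, as a subset of $K(X)$, with the set of mean equicontinuous points of $K(X)$; hence that set is not residual either, so $(K(X),T_{K})$ is not almost mean equicontinuous. The mean dichotomy then forces $(K(X),T_{K})$ to be mean sensitive, completing the proof.

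The argument is essentially formal once Theorem~\ref{thm:K(X)-Almo-MeanEqui=Almo-DiamMeanEqui} is in hand; the two points needing attention are (i) that Banks' theorem is what supplies transitivity of $K(X)$ — this is precisely where weak mixing of $X$ enters and cannot be removed, since the dichotomies require transitivity — and (ii) that Theorem~\ref{thm:K(X)-Almo-MeanEqui=Almo-DiamMeanEqui} is a statement about individual points, so that the word ``residual'' passes unchanged between the two equicontinuity notions. I do not expect any obstacle beyond this bookkeeping.
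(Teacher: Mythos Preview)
Your argument is correct and follows exactly the route the paper indicates: use Banks' theorem to get transitivity of $(K(X),T_K)$, invoke the $Q$-sensitive/almost $Q$-equicontinuous dichotomies from Section~\ref{sect:preliminaries}, and pass between the mean and diam-mean sides via Theorem~\ref{thm:K(X)-Almo-MeanEqui=Almo-DiamMeanEqui}. The only quibble is bibliographic: \cite[Theorem 5.4]{LTY13} is the \emph{mean} dichotomy, so for the diam-mean dichotomy you should point instead to the general statement in Section~\ref{sect:preliminaries} (or \cite{GM14}); mathematically nothing changes.
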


We do not know if this characterization holds in general. Theorem %
\ref{thm:KK(X)-Mean-Sen=Diam-Mean-Sen} present a partial characterization.
%

Recall that a map $\pi \colon X\rightarrow Y$ is called
\textit{semi-open} if for every non-empty open subset $U\subset X$ the
interior of $\pi (U)$ in $Y$ is not empty. Let $x\in X$. We say that \textit{%
$\pi $ is open at $x$ }if for every neighbourhood $U$ of $x$, $\pi (U)$ is a
neighbourhood of $\pi (x)$. It is known that
$\pi $ is semi-open if and only if the set $\{x\in X\colon \pi
\mbox{ is open
at }x\}$ is residual \cite[Lemma 2.1]{Gla07}.

\begin{lem}
\label{lem:semi-open} Let $\phi\colon K(K(X))\to K(X)$ be defined by $%
\mathcal{A}\mapsto D=\cup_{A\in\mathcal{A}}A$. Then $\phi$ is a well-defined
semi-open factor map.
\end{lem}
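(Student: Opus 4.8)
The plan is to verify three things about the map $\phi\colon K(K(X))\to K(X)$, $\mathcal{A}\mapsto D=\bigcup_{A\in\mathcal{A}}A$: (1) it is well-defined (i.e. $D$ is a non-empty closed subset of $X$), (2) it is a factor map (continuous, surjective, and commuting with the dynamics), and (3) it is semi-open. Well-definedness: given a non-empty closed $\mathcal{A}\subseteq K(X)$, the union $D=\bigcup_{A\in\mathcal{A}}A$ is clearly non-empty; for closedness, if $x_n\in D$ with $x_n\to x$, pick $A_n\in\mathcal{A}$ with $x_n\in A_n$, and by compactness of $K(X)$ pass to a subsequence with $A_n\to A\in\mathcal{A}$ in the Hausdorff metric; then $x\in A$ because the Hausdorff limit of closed sets contains all limits of points drawn from them, so $x\in D$.

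Next I would check the factor-map properties. For the dynamical intertwining, $\phi(T_K(\mathcal{A}))=\bigcup_{A\in\mathcal{A}}T_KA=\bigcup_{A\in\mathcal{A}}TA=T\bigl(\bigcup_{A\in\mathcal{A}}A\bigr)=T_K(\phi(\mathcal{A}))$, using that $T$ commutes with unions. Surjectivity is immediate since for any $D\in K(X)$ we have $\phi(\{D\})=D$ (note $\{D\}\in K(K(X))$). For continuity I would estimate the Hausdorff distance: if $d_H^{(2)}(\mathcal{A},\mathcal{B})<\varepsilon$ in $K(K(X))$, then every $A\in\mathcal{A}$ is within $\varepsilon$ (in $d_H$) of some $B\in\mathcal{B}$ and vice versa, and from this one gets $B(\phi(\mathcal{A}),\varepsilon)\supseteq\phi(\mathcal{B})$ and symmetrically, so $d_H(\phi(\mathcal{A}),\phi(\mathcal{B}))\le\varepsilon$; in fact $\phi$ is $1$-Lipschitz. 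Alternatively one can argue continuity at the level of the Vietoris basic open sets.

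The main work is semi-openness. By the cited criterion (\cite[Lemma 2.1]{Gla07}) it suffices to exhibit a residual set of points at which $\phi$ is open; I would in fact show $\phi$ is open at every $\mathcal{A}$ that is a finite subset of $K_\infty(X)$, i.e. at every $\mathcal{A}=\{A_1,\dots,A_k\}$ with each $A_j$ a finite subset of $X$ — these form a dense set, and a dense set of openness points suffices once we know (from the criterion) the openness set is either residual or meager, but it is cleaner simply to check openness directly on a basis. Take a basic Vietoris neighbourhood $\mathcal{N}=\langle \mathcal{O}_1,\dots,\mathcal{O}_n\rangle$ of $\mathcal{A}$ in $K(K(X))$, where each $\mathcal{O}_i$ is itself a basic open set in $K(X)$. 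The image $\phi(\mathcal{N})$ consists of all unions $\bigcup_{A\in\mathcal{A}'}A$ with $\mathcal{A}'\in\mathcal{N}$; I would show this contains a Vietoris neighbourhood of $D=\phi(\mathcal{A})$ in $K(X)$ by tracking, for each open set appearing in the descriptions of the $\mathcal{O}_i$, a corresponding open set in $X$ that $D$ meets, and checking that any closed set close to $D$ in the induced Vietoris sense can be realized as $\phi(\mathcal{A}')$ for a suitable $\mathcal{A}'\in\mathcal{N}$ (choosing the $A$'s to be small closed pieces distributed among the relevant open sets). The delicate point — and the main obstacle — is that $\phi$ is genuinely not open in general (unions collapse information), so one must be careful to only claim openness at the right points and to use \cite[Lemma 2.1]{Gla07} correctly: I would phrase it as ``the set of openness points of $\phi$ contains the dense $G_\delta$ coming from the criterion, hence $\phi$ is semi-open,'' after first establishing openness at enough points, or—more robustly—directly verify that for a \emph{residual} set of $\mathcal{A}$ (those for which $\phi$ is ``locally injective enough'') the image of a neighbourhood is a neighbourhood. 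Since the statement only asserts semi-openness, the cleanest route is: $\phi$ is a continuous surjection between compact metric spaces, and one checks semi-openness by the Glasner criterion, for which it is enough to produce \emph{one} point (equivalently a dense set of points) of openness; the singletons $\mathcal{A}=\{A\}$ are such points, since near $\{A\}$ the map $\phi$ is essentially the identity $\{A\}\mapsto A$ and hence open there. This last observation resolves the obstacle with minimal computation.
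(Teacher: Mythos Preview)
Your treatment of well-definedness, continuity (the $1$-Lipschitz estimate), surjectivity, and the intertwining relation is fine and matches the paper. The problem is in the semi-openness argument. Your ``cleanest route'' at the end is flawed: the set of singletons $\{\{A\}:A\in K(X)\}$ is a closed, nowhere-dense copy of $K(X)$ inside $K(K(X))$; it is \emph{not} dense. Hence showing that $\phi$ is open at each singleton (which is true) gives no information about the interior of $\phi(\mathcal{U})$ for a basic open set $\mathcal{U}$ that contains no singleton --- and most basic open sets $\langle\mathcal{U}_1,\ldots,\mathcal{U}_m\rangle$ with $m\geq 2$ and disjoint $\mathcal{U}_j$ contain none. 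The claim ``it is enough to produce one point of openness'' is simply false, and ``equivalently a dense set'' is not equivalent to one point. Your earlier idea (openness at finite collections of finite sets, which \emph{are} dense) could be made to work, but you abandoned it.

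The paper bypasses the Glasner criterion entirely and proves semi-openness directly and cleanly: given a basic open $\mathcal{U}=\langle\mathcal{U}_1,\ldots,\mathcal{U}_m\rangle\subset K(K(X))$, it refines each $\mathcal{U}_j$ to contain some $\langle U_1^j,\ldots,U_n^j\rangle$, sets $\mathcal{W}=\langle U_1^1,\ldots,U_n^1,\ldots,U_1^m,\ldots,U_n^m\rangle\subset K(X)$, and shows $\mathcal{W}\subset\phi(\mathcal{U})$. The trick is: for any $E\in\mathcal{W}$, shrink to $V_i^j$ with $\overline{V_i^j}\subset U_i^j$ and $E\in\langle V_1^1,\ldots,V_n^m\rangle$, set $E^j=\bigcup_i(E\cap\overline{V_i^j})$, and observe $\mathcal{E}=\{E^1,\ldots,E^m\}\in\mathcal{U}$ with $\phi(\mathcal{E})=E$. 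This is exactly the ``slicing $E$ into closed pieces'' idea you gestured at but did not execute; it is short and avoids any residual-set bookkeeping.
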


\begin{proof}
To see that $\phi $ is well-defined, it suffices to check that $D=\cup
_{A\in \mathcal{A}}A\in K(X)$ for any given $\mathcal{A}\in K(K(X))$. Let $%
\{x_{n}\}_{n=1}^{\infty }$ be a sequence of $D$ and $x_{n}\rightarrow x$,
and $A_{n}\in \mathcal{A}$ with $x_{n}\in A_{n}$ for each $n\in \mathbb{N}$.
Assume $A_{n}$ converges to $A\in \mathcal{A}$ $\in K(K(X))$. This implies
that $x\in A\subset D$ and thus $D\in K(X)$. Furthermore, we observe that $%
d_{H}(\cup _{A\in \mathcal{A}}A,\cup _{B\in \mathcal{B}}B)\leq d_{H}(%
\mathcal{A},\mathcal{B})$ for any $\mathcal{A},\mathcal{B}\in K(K(X))$, $%
\phi (\{C\})=C$ for any $C\in K(X),$ and $T_{K}\circ \phi =\phi \circ T_{K}$%
. Hence $\phi $ is a factor map.

Now let $\mathcal{U}=\langle \mathcal{U}_{1},\mathcal{U}_{2},\ldots ,%
\mathcal{U}_{m}\rangle $ be a non-empty open subset of $K(K(X))$, where each
$\mathcal{U}_{j}$ is non-empty open in $K(X)$. Without loss of generality we
assume that for each $1\leq j\leq m$ there are non-empty open subsets $%
U_{1}^{j},\ldots ,U_{n}^{j}$ such that $\langle U_{1}^{j},\ldots
,U_{n}^{j}\rangle \subset \mathcal{U}_{j}$.
Let $\mathcal{W}=\langle U_{1}^{1},\ldots ,U_{n}^{1},\ldots
,U_{1}^{m},\ldots ,U_{n}^{m}\rangle $. It is not hard to see that $\mathcal{W%
}$ is non-empty open in $K(X)$. We now claim that $\mathcal{W}\subset \phi (%
\mathcal{U})$. Indeed, for any $E$ in $\mathcal{W}$ there are non-empty open
subsets $V_{i}^{j}$ of $X$ such that $V_{i}^{j}\subset \overline{V_{i}^{j}}%
\subset U_{i}^{j},i=1,\ldots ,n,j=1.\ldots ,m$ and $E\in \langle
V_{1}^{1},\ldots ,V_{n}^{1},\ldots ,V_{1}^{m},\ldots ,V_{n}^{m}\rangle $.
Let $E_{i}^{j}=E\cap \overline{V_{i}^{j}}$ and $E^{j}=\cup
_{i=1}^{n}E_{i}^{j}$. Clearly $E_{i}^{j}\subset U_{i}^{j}$, $E^{j}\in
\mathcal{U}_{j}$ and $E=\cup _{j=1}^{m}E^{j}$. We define $\mathcal{E}%
:=\{E^{1},\ldots ,E^{m}\}.$ We have that $\mathcal{E}\in \mathcal{U}$ and $%
\phi (\mathcal{E})=E$. This shows $\phi $ is semi-open.
\end{proof}

\begin{lem}
\label{lem:lift} Let $\pi\colon(X,T)\to(Y,S)$ be a semi-open factor map. If $%
(Y,S)$ is diam-mean sensitive then so is $(X,T)$.
\end{lem}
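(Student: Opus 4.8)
The plan is to transport the diam-mean sensitivity constant of $(Y,S)$ back through $\pi$: semi-openness lets me produce a non-empty open set in $Y$ from a given non-empty open set in $X$, the intertwining relation $\pi\circ T=S\circ\pi$ lets me compare the orbit of that open set with the orbit of its image, and uniform continuity of $\pi$ lets me compare diameters upstairs and downstairs. Fix $\delta>0$ witnessing diam-mean sensitivity of $(Y,S)$ and put $D=\diam(Y)$ (which is finite and positive, since a one-point system is not diam-mean sensitive). As $\pi$ is continuous on a compact metric space it is uniformly continuous, so there is $\eta>0$ such that $d_X(x,x')\le\eta$ implies $d_Y(\pi x,\pi x')\le\delta/2$; equivalently, for every $A\subseteq X$, $\diam(\pi(A))>\delta/2$ forces $\diam(A)>\eta$. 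I will show that $(X,T)$ is diam-mean sensitive with the constant $\eta\delta/(3D)$, which is independent of the chosen open set.

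Let $U\subseteq X$ be non-empty open. By semi-openness $V:=\operatorname{int}(\pi(U))$ is a non-empty open subset of $Y$, so $\limsup_{n\to\infty}\frac1n\sum_{i=0}^{n-1}\diam(S^iV)>\delta$. From $\pi\circ T=S\circ\pi$ we get $\pi\circ T^i=S^i\circ\pi$, hence $S^i(V)\subseteq S^i(\pi(U))=\pi(T^i(U))$ and therefore $\diam(S^iV)\le\diam(\pi(T^iU))$ for every $i\in{\mathbb{Z}}_+$. Consequently $\limsup_{n\to\infty}\frac1n\sum_{i=0}^{n-1}\diam(\pi(T^iU))>\delta$ as well.

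Now take a sequence $n_k\to\infty$ along which $\frac1{n_k}\sum_{i=0}^{n_k-1}\diam(\pi(T^iU))>\delta$, and set $F_k=\{\,0\le i\le n_k-1:\ \diam(\pi(T^iU))>\delta/2\,\}$. Splitting the average over $F_k$ and its complement, and bounding $\diam(\pi(T^iU))$ by $D$ for $i\in F_k$ and by $\delta/2$ for $i\notin F_k$, gives $\frac{|F_k|}{n_k}D+\frac{\delta}{2}>\delta$, whence $\frac{|F_k|}{n_k}>\frac{\delta}{2D}$. For $i\in F_k$ we have $\diam(\pi(T^iU))>\delta/2$, so $\diam(T^iU)>\eta$ by the choice of $\eta$. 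Hence $\frac1{n_k}\sum_{i=0}^{n_k-1}\diam(T^iU)\ge\frac{|F_k|}{n_k}\,\eta>\frac{\eta\delta}{2D}$ for all large $k$, and letting $k\to\infty$ yields $\limsup_{n\to\infty}\frac1n\sum_{i=0}^{n-1}\diam(T^iU)\ge\frac{\eta\delta}{2D}>\frac{\eta\delta}{3D}$. Since $U$ was arbitrary, $(X,T)$ is diam-mean sensitive.

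The only genuinely delicate point is the last step: uniform continuity of $\pi$ yields only a thresholded comparison of diameters ("large image diameter implies large preimage diameter"), not a Lipschitz-type one, so the Cesàro lower bound on $\diam(\pi(T^iU))$ cannot be pulled back term by term. The level-set/density splitting above is exactly what converts the averaged lower bound into a lower-density bound for the set of times at which $\diam(T^iU)$ exceeds $\eta$; all that remains is to keep track that $\delta$, $\eta$, and $D$ — and hence the resulting constant — are chosen once and for all, independently of $U$.
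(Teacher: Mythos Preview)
Your proof is correct, and it is actually more streamlined than the paper's argument. The paper works pointwise: it invokes the residual set of continuity points of the (upper semi-continuous) set-valued map $\pi^{-1}\colon Y\to K(X)$ to guarantee that, inside a given open $U\subset X$, one can find a base point $x_0$ with $\pi(x_0)=y_0$ such that a whole ball $B(y_0,\sigma)$ lies in $\pi(B(x_0,\varepsilon))$; it then chooses, for each ``sensitive'' time $i$, a specific $y_i'\in B(y_0,\sigma)$ separated from $y_0$ under $S^i$, lifts it to some $x_i\in U$, and finally uses uniform continuity of $\pi$ to bound $d(T^ix_i,T^ix_0)$ from below. Your argument skips all of the point-selection by observing the set-level inclusion $S^i(\operatorname{int}\pi(U))\subset \pi(T^iU)$ directly, which immediately gives $\diam(S^iV)\le\diam(\pi(T^iU))$; from there only uniform continuity of $\pi$ and the threshold/density trick (the paper's Remark~\ref{rem:equi}\eqref{rem:equi:3}) are needed. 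Both routes rely on that same averaging-to-density conversion and on uniform continuity of $\pi$ for the final step, but your version avoids the machinery of continuity points of $\pi^{-1}$ altogether and makes the independence of the sensitivity constant from $U$ completely transparent.
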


\begin{proof}
The map $\pi ^{-1}\colon Y\rightarrow K(X)$ is an upper semi-continuous
function, so it possesses a residual subset $Y_{0}\subset Y$ of continuous
points. Since $\pi $ is semi-open, by \cite[Lemma 2.1]{Gla07} $X_{0}=\pi
^{-1}(Y_{0})$ is a residual set of $X$. Now let $U$ be a non-empty open
subset in $X$. Let $x_{0}\in X_{0}\cap U$ and $\varepsilon >0$ be such that $%
B(x_{0},\varepsilon )\subset U$. By the definition of $X_{0}$ and the
semi-openness of $\pi $, there is a continuous point $y_{0}\in Y_{0}\cap
\text{int}(\pi (B(x_{0},\varepsilon )))$ for $\pi ^{-1}$ with $y_{0}=\pi
(x_{0})$ (where $\text{int}(\cdot )$ denotes the interior of a given set).
Let $\sigma >0$ be such that $B(y_{0},\sigma )\subset \text{int}(\pi
(B(x_{0},\varepsilon )))$. Since $(Y,S)$ is diam-mean sensitive, similarly to Remark \ref{rem:equi}\eqref{rem:equi:3} there is
a $t>0$ such that the upper density of $F:=\{i\in {\mathbb{Z}}_{+}\colon %
\diam(S^{i}B(y_{0},\sigma ))>2t\}$ is greater than $t$. For each $i\in F$ there
exists $y_{i}\in B(y_{0},\sigma )$ such that
\begin{equation*}
d(S^{i}y_{i},S^{i}y_0)>t.
\end{equation*}%
By the continuity of $S^{i}$, there is a $y_{i}{^{\prime }}\in Y_{0}\cap
B(y_{0},\sigma )$ such that
\begin{equation*}
d(S^{i}y_{i},S^{i}y_{i}{^{\prime }})<d(S^{i}y_{i},S^{i}y_0)-t,
\end{equation*}%
and hence
\begin{equation*}
d(S^{i}y_{i}{^{\prime }},S^{i}y_0)>t.
\end{equation*}%
Since $y_{i}^{\prime }\in Y_{0}\cap B(y_{0},\sigma )\subset Y_{0}\cap \pi
(B(x_{0},\varepsilon ))$, there is an $x_{i}\in B(x_{0},\varepsilon )$ with $%
y_{i}^{\prime }=\pi (x_{i})$. Then
\begin{equation*}
d(\pi T^{i}x_{i},\pi T^{i}x_{0})>t.
\end{equation*}%
By the continuity of $\pi $, there is $0<s<t$ such that
\begin{equation*}
d(T^{i}x_{i},T^{i}x_{0})>s,
\end{equation*}%
thus $F\subset \{i\in {\mathbb{Z}}_{+}\colon \diam(T^{i}B(x_{0},\varepsilon
))>s\}.$ Similarly to Remark \ref{rem:equi}\eqref{rem:equi:3} we have
\begin{align*}
\limsup_{n\rightarrow \infty }\frac{1}{n}\sum_{i=0}^{n-1}\diam(T^{i}U)& \geq
\limsup_{n\rightarrow \infty }\frac{1}{n}\sum_{i=0}^{n-1}\diam%
(T^{i}B(x_{0},\varepsilon )) \\
& \geq s\cdot \overline{D}(F)>st>0,
\end{align*}%
showing that $(X,T)$ is diam-mean sensitive.
\end{proof}


The following lemma is a particular case of \cite[Corollary 1]{WWC15}.

\begin{lem}
\label{lem:F-Sen} Let $(X,T)$ be a t.d.s.
If $(K(X),T_{K})$ is diam-mean sensitive then so is $(X,T)$.
\end{lem}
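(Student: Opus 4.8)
The plan is to reduce diam-mean sensitivity of $(X,T)$ to that of $(K(X),T_{K})$ by comparing, for a non-empty open set $U\subseteq X$, the orbit of the basic Vietoris open set $\langle U\rangle=\{A\in K(X):A\subseteq U\}$ under $T_{K}$ with the orbit of $U$ under $T$. First I would fix a sensitivity constant $\delta>0$ witnessing that $(K(X),T_{K})$ is diam-mean sensitive and take an arbitrary non-empty open $U\subseteq X$. Then $\langle U\rangle$ is a non-empty open subset of $K(X)$ (it contains $\{x\}$ for every $x\in U$), so by hypothesis $\limsup_{n\to\infty}\frac1n\sum_{i=0}^{n-1}\diam(T_{K}^{i}\langle U\rangle)>\delta$, where the diameter in $K(X)$ is measured with the Hausdorff metric $d_{H}$.

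The key step is the inequality $\diam(T_{K}^{i}\langle U\rangle)\le\diam(T^{i}U)$ for every $i\in{\mathbb Z}_{+}$. To see this, note that for $A,B\in\langle U\rangle$ the sets $T_{K}^{i}A=T^{i}A$ and $T_{K}^{i}B=T^{i}B$ are non-empty closed subsets of $T^{i}U$, and any two non-empty subsets of a set of diameter $\rho$ lie within Hausdorff distance $\rho$ of each other: given $a\in A$ and any fixed $b_{0}\in B$, $d(T^{i}a,T^{i}B)\le d(T^{i}a,T^{i}b_{0})\le\diam(T^{i}U)$, and symmetrically, whence $d_{H}(T_{K}^{i}A,T_{K}^{i}B)\le\diam(T^{i}U)$; taking the supremum over $A,B\in\langle U\rangle$ gives the claim. (Equality in fact holds, since restricting to pairs of singletons recovers $\diam(T^{i}U)$, but only the displayed inequality is needed.)

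Combining the two facts, $\limsup_{n\to\infty}\frac1n\sum_{i=0}^{n-1}\diam(T^{i}U)\ge\limsup_{n\to\infty}\frac1n\sum_{i=0}^{n-1}\diam(T_{K}^{i}\langle U\rangle)>\delta$, and since $U$ was an arbitrary non-empty open subset of $X$ this shows $(X,T)$ is diam-mean sensitive with the same constant $\delta$. I do not expect a genuine obstacle here; the only mildly delicate point is the Hausdorff-metric estimate above, which is routine. Alternatively one may simply cite \cite[Corollary 1]{WWC15}, of which this lemma is the special case corresponding to the (filter-like) family underlying diam-mean sensitivity.
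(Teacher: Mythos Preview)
Your argument is correct: the key inequality $\diam(T_{K}^{i}\langle U\rangle)\le\diam(T^{i}U)$ follows exactly as you say, and together with the sensitivity of $(K(X),T_K)$ applied to $\langle U\rangle$ it yields the conclusion. The paper itself does not give a proof of this lemma but simply cites \cite[Corollary 1]{WWC15}, which you also mention as an alternative; your direct argument is precisely the specialization of that result to the diam-mean case, so there is no substantive difference.
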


Now we are ready to show Theorem \ref{thm:KK(X)-Mean-Sen=Diam-Mean-Sen}.

\begin{proof}[Proof of Theorem \protect\ref{thm:KK(X)-Mean-Sen=Diam-Mean-Sen}%
]
The equivalence of $\eqref{thm:Mean-Sen=Diam-Mean-Sen:1}\Longleftrightarrow %
\eqref{thm:Mean-Sen=Diam-Mean-Sen:2}$ follows from Lemmas \ref{lem:semi-open}%
, \ref{lem:lift} and \ref{lem:F-Sen}. When $X$ is weakly mixing, we use
Corollary \ref{cor:WM} to conclude that $\eqref{thm:Mean-Sen=Diam-Mean-Sen:1}%
\Longleftrightarrow \eqref{thm:Mean-Sen=Diam-Mean-Sen:2}\Longleftrightarrow %
\eqref{thm:Mean-Sen=Diam-Mean-Sen:3}\Longleftrightarrow %
\eqref{thm:Mean-Sen=Diam-Mean-Sen:4}$.
\end{proof}

\providecommand{\bysame}{\leavevmode\hbox to3em{\hrulefill}\thinspace} %
\providecommand{\MR}{\relax\ifhmode\unskip\space\fi MR }
\providecommand{\MRhref}[2]{  \href{http://www.ams.org/mathscinet-getitem?mr=#1}{#2}
} \providecommand{\href}[2]{#2}

\end{document}